\documentclass[reqno,12pt,twoside]{article}
\usepackage[a4paper,top=23mm,bottom=23mm,inner=23mm,outer=21mm]{geometry}
\usepackage{amssymb}
\usepackage{amsmath}
\usepackage{amsthm}
\numberwithin{equation}{section}

\newtheorem{theorem}{Theorem}[section]
\newtheorem{lemma}[theorem]{Lemma}
\newcommand{\Nzero}{\mathbb N_0}
\newcommand{\eps}{\varepsilon}
\newcommand{\cP}{\mathcal P}
\begin{document}

\title{\Large On a problem of Nathanson related to strongly minimal asymptotic bases of order $h$}
\author{\large Shi-Qiang Chen\thanks{This work is supported by the National Natural Science Foundation of China (Grant No. 12301003), the Anhui Provincial Natural Science Foundation (Grant No. 2308085QA02) and the University Natural Science Research Project of Anhui Province (Grant No. 2022AH050171).}}
\date{} \maketitle
\vskip -3cm
\begin{center}
\vskip -1cm { \small
\begin{center}
School of Mathematics and Statistics, Anhui Normal University
\end{center}
\begin{center}
Wuhu 241002, PR China
\end{center}}
\end{center}

{\bf Abstract.} Let $h\geq2$ be an integer and let $1/h<\theta\leq1/(h-1)$. In this paper, we prove that there exists a strongly minimal asymptotic basis $A$ of order $h$ such that $A(x)\asymp x^\theta$. This solves a problem posed by Nathanson in 1988.

\medskip
\noindent{\bf Keywords:} asymptotic basis; strongly minimal asymptotic basis; counting function; congruence; Chinese remainder theorem

\medskip
\noindent 2020 {\it Mathematics Subject Classification}: 11B13.

\section{Introduction}
Let $\Nzero$ denote the set of nonnegative integers. For $A\subseteq\Nzero$, a real number $x\geq1$, and a positive integer $h$, define
\[
 A(x)=|A\cap[0,x]|
\]
and
\[
 hA=\{a_1+\cdots+a_h:a_1,\ldots,a_h\in A\}.
\]
We call $A$ an asymptotic basis of order $h$ if $hA$ contains every sufficiently large integer. For $a\in A$, put
\[
 E_a=hA\setminus h(A\setminus\{a\}),\qquad E_a(x)=|E_a\cap[0,x]|.
\]
Following Nathanson~\cite{Nathanson1988}, an asymptotic basis $A$ is called \emph{strongly minimal} if $E_a(x)\gg_a A(x)^{h-1}$ for any $a\in A$. 

If $A$ is an asymptotic basis of order $h$, then
\[
 x-O(1)\leq \binom{A(x)+h-1}{h},
\]
and hence $A(x)\gg x^{1/h}$. If $A$ is strongly minimal, then for every $a\in A$,
\[
 A(x)^{h-1}\ll_a E_a(x)\leq x+1,
\]
and hence $A(x)\ll x^{1/(h-1)}$.

Nathanson~\cite{Nathanson1988} constructed strongly minimal asymptotic bases of order $h$ with $A(x)\asymp x^{1/h}$ and posed the following problem.\\
{\bf Nathanson's Problem.}
Let $h\geq2$ be an integer and let $1/h<\theta\leq1/(h-1)$. Does there exist a strongly minimal asymptotic basis $A$ of order $h$ and a constant $c>0$ such that
\[
 A(x)>cx^\theta
\]
for all sufficiently large $x$?

For classical results on minimal and strongly minimal asymptotic bases, see
\cite{ChenChen2011,Chen2025Strongly,ErdosNathanson1980,JiaNathanson1989,Nathanson1974,Sun2021}.
For closely related work on thin, sparse, and extremal additive bases, see
\cite{DeshouillersFouvry1976,ErdosNathanson1977,Pilatte2024,Vu2000,Wooley2003}.

In this paper, we solve this problem in the stronger form $A(x)\asymp x^\theta$.

\begin{theorem}\label{thm11}
Let $h\ge2$ be an integer and let $1/h<\theta\le1/(h-1)$. Then there exists a strongly minimal asymptotic basis $A$ of order $h$ such that $A(x)\asymp x^\theta$.
\end{theorem}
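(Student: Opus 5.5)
The plan is to build $A$ as a union of two parts, $A=B\cup C$. The thin part $B$ will be a Nathanson-type basis with $B(x)\asymp x^{1/h}$. It handles the basis property and carries no density burden. The dense part $C$ will have $C(x)\asymp x^\theta$. All elements of $A$ will be placed in prescribed residue classes modulo a fixed modulus $m$ so that $hA$ can be sorted by residue. Concretely, fix an integer $m$ and residues $r_1,\dots,r_h,s$ chosen Sidon-like modulo $m$, say $r_j\equiv (h+1)^j$ with $m$ much larger than $(h+1)^{h+1}$. Choosing them this way ensures that a sum of $h$ residues from $\{r_1,\dots,r_h,s\}$ determines the multiset of residues used. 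We then split $C=C_1\cup\dots\cup C_h$ with $C_j\subseteq r_j+m\Nzero$, and put $B\subseteq s+m\Nzero$. The Chinese remainder theorem will be used to align the residue conditions with the digit-type conditions in the next step.

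Inside each class we use disjoint digit supports. Write $n$ in base $g$ (for instance $g=2^{h}$, possibly after removing the residue part), and partition the digit positions into sets $S_1,\dots,S_h$ such that $S_j\cap[0,L]$ has about $\theta L$ elements for each of $j=1,\dots,h-1$. This is possible because $(h-1)\theta\le1$. The remaining positions are handled by $B$. Let $C_j$ consist of the elements of $r_j+m\Nzero$ whose digits after the residue shift lie in $S_j$. The aim is then to prove two facts about a sum $a_1+\dots+a_h$ whose residue pattern is one element from each of $C_1,\dots,C_{h-1}$ plus one element of $C_h\cup B$. First, the residue pattern forces exactly this type of representation. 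Second, the digit supports are disjoint, and digits are restricted (e.g.\ to $\{0,1\}$) so that no carries occur; this recovers each $a_j$ uniquely.

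Given these two facts, for $a\in C_i$ with $i\le h-1$, every such $n$ with $i$-th component equal to $a$ lies in $E_a$. This gives $E_a(x)\gg_a \prod_{j\ne i}C_j(x)\gg x^{(h-1)\theta}\asymp A(x)^{h-1}$, where the product runs over the other $h-2$ dense sets together with the last slot, which must also be made of size $\gg x^\theta$. The last slot is why $C_h$ is given density $x^\theta$ as well, using positions that overlap $S_1$ but lie in a different residue class. The elements of $B$ and $C_h$ are then treated symmetrically by the same counting. Finally we check the basis property. Every large $n$ in the correct residue class is hit by the thin digit-basis $B$ combined with zeros or small elements. This requires including small "anchor" elements in each class, again arranged via CRT so that they do not destroy uniqueness. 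For the counting function we check that $A(x)\asymp x^\theta$, i.e.\ the upper bound $C_j(x)\ll x^\theta$ coming from $|S_j\cap[0,\log_g x]|\le\theta\log_g x+O(1)$.

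The main obstacle will be reconciling the dense sets with the covering requirement and with uniqueness simultaneously. The sets $C_j$ must be dense enough that each $a$ is essential for $x^{(h-1)\theta}$ integers. On the other hand, all $h$-fold sums that could compete with a chosen representation must be ruled out, including those using two elements of the same $C_j$, or elements of $B$ mixed in. The residue-pattern argument disposes of mixed patterns, but the carry-free uniqueness within the allowed pattern is the delicate point. If plain disjoint supports are insufficient, I would separate the supports by buffer digits (forced zeros), or switch to a block construction: dyadic blocks $[N_k,N_{k+1})$, with congruence conditions modulo distinct primes $p_k$ chosen via the Chinese remainder theorem so that membership in a block is detectable modulo $p_k$. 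The buffer costs only a constant factor in the exponent count, which can be absorbed by slightly adjusting the densities of the $S_j$.
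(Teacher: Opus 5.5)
There is a genuine gap, and it is in the core of the plan. You rely on two facts: the residue pattern forces the type of a representation, and carry-free digits then recover every summand. These cannot both hold at the densities you need. If every sum $c_1+\cdots+c_h$ with $c_j\in C_j$ were uniquely represented and each $C_j(x)\asymp x^\theta$, then $C_1+\cdots+C_h$ would contain $\gg\prod_{j}C_j(x/h)\gg x^{h\theta}$ distinct integers in $[0,x]$. That is impossible, since $h\theta>1$. If instead the last slot is thin ($B(x)\asymp x^{1/h}$, or the leftover digit positions of density $1-(h-1)\theta$), your count only gives $E_a(x)\gg x^{(h-2)\theta+1/h}$, resp.\ $x^{1-\theta}$. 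Both are $o(x^{(h-1)\theta})$ precisely because $\theta>1/h$. Letting $C_h$ reuse the positions of $S_1$ in another class modulo a fixed $m$ does not escape this: it reintroduces carries, and within one residue pattern the pigeonhole count is unchanged. Buffer digits do not help either.

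The obstruction is structural. If $n\in E_a$, then $a$ occurs in every representation of $n$, so each $n$ lies in at most $h$ of the sets $E_a$ and
\[
 \sum_{a\in A,\ a\le x}E_a(hx)\le h(hx+1).
\]
Consequently the constants $c_a$ in $E_a(y)\ge c_aA(y)^{h-1}$ must satisfy $\sum_{a\in A,\,a\le x}c_a\ll x^{1-(h-1)\theta}=o(A(x))$, so they must tend to $0$ on average. Your mechanism makes every $a\in C_i\cap[0,x/h]$ essential for the same $\prod_{j\ne i}C_j(x/h)\gg x^{(h-1)\theta}$ integers, uniformly in $a$. So no choice of supports, buffers or residues can make it work. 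There is also an independent problem with the basis property. If all elements lie in $h+1$ fixed classes with Sidon-like residues, then $hA$ meets at most $\binom{2h}{h}$ of the $m$ classes. The anchors needed for the remaining classes reintroduce exactly the mixed patterns the residue argument was meant to exclude.

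The missing idea is to allocate essentiality non-uniformly, element by element. The paper builds $A$ scale by scale, $L_s=R^{s-1}L_1$, adding $\asymp L_s^\theta$ elements in $[\delta L_s,L_s]$ with $[\lambda L_s,\mu L_s]\subseteq hA_s$. It gives each element $e_i$ a private prime $q_i$ with $A\cap(e_i+q_i\mathbb Z)=\{e_i\}$, and targets $e_i$ only at the scales with $v_2(s)=i-1$ (gaps $2^i$). At such a scale the argument runs as follows, for $n\equiv t\pmod{q_i}$ in a window $K$ near $\omega L_s$:
\begin{itemize}
\item representations using the covering set $\mathcal E(L_s)$ are excluded by residues modulo $q_i$ (Lemma~\ref{lem34});
\item Lemmas~\ref{lem31} and~\ref{lem32} sort the remaining summands by the size intervals $B_d(L_s)$, leaving only the patterns $\{0,H,\ldots,H\}$ and $\{C,O_0,\ldots,O_{p-1}\}$;
\item the second pattern is removed from a window $J_1$ of length $\asymp T_s\asymp L_s^{(h-1)\theta}$ (Lemmas~\ref{lem35} and~\ref{lem36});
\item in the first pattern, the residues modulo $q_i$ force the small summand to be $e_i$.
\end{itemize}
This produces $\gg T_s/q_i$ elements of $E_{e_i}$ per targeted scale. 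Hence $E_{e_i}(x)\gg x^{(h-1)\theta}/(q_iR^{2^i\kappa})$, with constants decaying in $i$ exactly as the averaging bound requires.
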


Throughout this paper, put $p=h-1$ and $\kappa=p\theta$.

\section{Lemmas}

For a positive integer $m$, let $\mathbb Z_m$ denote the set of residue classes modulo $m$,
represented by $0,1,\ldots,m-1$; all equalities in $\mathbb Z_m$ are understood modulo $m$.
Whenever intervals occur in counting arguments or additive bases, they are intersected with $\mathbb Z$ unless explicitly declared to be real intervals.
For a closed interval $I=[u,v]$, write $\operatorname{mid}(I)=(u+v)/2$.
For a nonempty bounded integer interval $K$, put
$\partial K=\{\min K,\max K\}$, and for a nonempty set $S\subseteq K$ define
$\operatorname{dist}(S,\partial K)=\min\{|x-y|:x\in S,\ y\in\partial K\}$.
For a real interval $I$, let $\operatorname{int}I$ denote its interior.
The length of a real interval is the difference of its endpoints. 

Let
\[
 e_i\in\Nzero,\qquad e_i\ne e_j\ (i\ne j),\qquad
 q_i\ne q_j\ (i\ne j),\qquad q_i\ \text{prime},
\]
and put
\[
 Q_i=\prod_{\nu\le i}q_\nu.
\]
We require
\begin{equation}\label{eq21}
 q_i>q_0,\qquad q_i^{-1/h}<2^{-i-10},\qquad
 q_{i+1}>C_*Q_i^4.
\end{equation}
The parameters will be fixed in Section~\ref{sec3} in the order
$R,\sigma,\zeta,\eta,q_0,C_*$.

For $N\ge1$ and $L,T>0$, put
\[
 B_N:=\{e_1,\ldots,e_N\}\subseteq[0,L],
\]
and require
\begin{equation}\label{eq22}
 B_N\cap(e_i+q_i\mathbb Z)=\{e_i\}
 \qquad(1\le i\le N).
\end{equation}
If
\[
 \{i\le N:Q_i\le\eta T\}\ne\varnothing,
\]
define
\begin{equation}\label{eq23}
 k=\max\{i\le N:Q_i\le\eta T\},\qquad Q=Q_k.
\end{equation}
For
\[
 X\subseteq[0,L]\setminus B_N,
\]
we require every $x\in X$ to satisfy
\begin{equation}\label{eq24}
\begin{aligned}
 x&\not\equiv e_i\pmod{q_i}\quad(1\le i\le k),\\
 x&\not\equiv e_{k+1}\pmod{q_{k+1}}
 \quad(k<N,\ q_{k+1}\le2L).
\end{aligned}
\end{equation}

\begin{lemma}\label{lem21}
Let $R,C_*>1$, let $L$ be sufficiently large, let $T=\lfloor\sigma L^\kappa\rfloor$, and suppose that $Q_1\le\eta T\le L$.
Assume that $q_1,\ldots,q_N$ satisfy \eqref{eq21}, and let $k,Q$ be given by \eqref{eq23}. Then
\begin{equation}\label{eq25}
 Q_k>2^{h\sum_{i=1}^k(i+10)}\ge2^{(h/2)k^2},\qquad
 \prod_{i=1}^N(1+q_i^{-1/h})<e^{2^{-10}}.
\end{equation}
Moreover,
\[
 k=O(\sqrt{\log L}),\qquad
 \sum_{i=1}^N\frac1{q_i}<2^{-10}.
\]
If $k<N$, then
\[
 q_{k+1}>C_*^{1/5}(\eta T)^{4/5};
\]
and if $k+2\le N$, then
\[
 q_{k+2}>C_*\eta^4T^4>2L.
\]
Furthermore, if $A_0=\{e_1,\ldots,e_N\}\subseteq[0,L]$ satisfies \eqref{eq22}, and every element of
$X\subseteq[0,L]\setminus A_0$ satisfies \eqref{eq24}, then
\[
 (A_0\cup X)\cap(e_i+q_i\mathbb Z)=\{e_i\}
 \qquad(1\le i\le N).
\]
If $B\subseteq[0,L]$, $e\in B$, and $q>RL$, then
\[
 B\cap(e+q\mathbb Z)=\{e\}.
\]
\end{lemma}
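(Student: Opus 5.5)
The plan is to verify each assertion of Lemma~\ref{lem21} directly from the growth conditions \eqref{eq21}, in the order they are stated.

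\textbf{The product bounds \eqref{eq25}.} From $q_i^{-1/h}<2^{-i-10}$ we get $q_i>2^{h(i+10)}$. Multiplying over $i\le k$ gives the first inequality in \eqref{eq25}. Since $\sum_{i\le k}(i+10)\ge k(k+1)/2\ge k^2/2$, this is at least $2^{(h/2)k^2}$. For the product we use $1+x\le e^x$, so
\[
\prod_{i=1}^N(1+q_i^{-1/h})\le\exp\Bigl(\sum_i 2^{-i-10}\Bigr)<e^{2^{-10}}.
\]

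\textbf{The bounds on $k$ and $\sum 1/q_i$.} Since $Q_k\le\eta T\le L$, we get $2^{(h/2)k^2}\le L$, hence $k\le\sqrt{2\log_2 L/h}=O(\sqrt{\log L})$. Also $1/q_i\le q_i^{-1/h}<2^{-i-10}$, and summing gives $\sum 1/q_i<2^{-10}$.

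\textbf{Lower bounds on $q_{k+1}$ and $q_{k+2}$.} By maximality of $k$, if $k<N$ then $Q_{k+1}=Q_kq_{k+1}>\eta T$. Combining this with $q_{k+1}>C_*Q_k^4$, i.e.\ $Q_k<(q_{k+1}/C_*)^{1/4}$, gives
\[
\eta T<q_{k+1}^{5/4}C_*^{-1/4}, \qquad\text{so}\qquad q_{k+1}>C_*^{1/5}(\eta T)^{4/5}.
\]
Here $k\ge1$ holds because $Q_1\le\eta T$. If $k+2\le N$, then $q_{k+2}>C_*Q_{k+1}^4>C_*(\eta T)^4$. Finally $C_*\eta^4T^4>2L$ because $T\asymp\sigma L^\kappa$ with $\kappa=p\theta>p/h\ge1/2$, so $T^4\gg L^{2}$, which exceeds $2L$ once $L$ is large (the constants $\sigma,\eta$ are fixed). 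I expect this last comparison to be the only point needing care: it is where one must check that $4\kappa>1$ and that ``$L$ sufficiently large'' is allowed to depend on $\sigma,\eta,C_*$.

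\textbf{The congruence-class statements.} Fix $i\le N$ and suppose $y\in(A_0\cup X)\cap(e_i+q_i\mathbb Z)$.
\begin{itemize}
\item If $y\in A_0$, then $y=e_i$ by \eqref{eq22}.
\item If $y\in X$ and $i\le k$, or $i=k+1$ with $q_{k+1}\le 2L$, then \eqref{eq24} excludes it.
\item If $i=k+1$ with $q_{k+1}>2L$, or $i\ge k+2$ (where $q_i\ge q_{k+2}>2L$ by the above, since the $q_i$ increase), then $|y-e_i|\le L<q_i$ forces $y=e_i$. But $e_i\notin X$, a contradiction.
\end{itemize}
The last claim is the same diameter argument: if $q>RL\ge L$, two elements of $[0,L]$ that are congruent mod $q$ differ by at most $L<q$, so they are equal.
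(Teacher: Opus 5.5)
Your proposal is correct and follows the paper's proof essentially step for step. The bound $q_i>2^{h(i+10)}$ gives \eqref{eq25}, the estimate on $k$ and $\sum 1/q_i$; maximality of $k$ together with $q_{k+1}>C_*Q_k^4$ gives the bounds on $q_{k+1}$ and $q_{k+2}$, with $4\kappa>2$ for the final comparison; and the congruence claims use the same split into the cases covered by \eqref{eq24} and the cases $q_i>2L$, where $|x-e_i|\le L<q_i$ forces $x=e_i$.
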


\begin{proof}
By \eqref{eq21}, for every $i$ we have
\[
 q_i^{-1/h}<2^{-i-10}.
\]
Hence $q_i>2^{h(i+10)}$, and therefore
\[
 Q_k=\prod_{i=1}^kq_i
 >2^{h\sum_{i=1}^k(i+10)}
 \ge2^{(h/2)k^2}.
\]
Since $h\ge2$,
\[
 \sum_{i=1}^N\frac1{q_i}
 \le\sum_{i=1}^Nq_i^{-1/h}
 <\sum_{i\ge1}2^{-i-10}=2^{-10},
\]
and, for every choice of the first $N$ primes,
\[
 \prod_{i=1}^N(1+q_i^{-1/h})
 \le\exp\left(\sum_{i=1}^Nq_i^{-1/h}\right)
 <e^{2^{-10}}.
\]
Thus the bounds in \eqref{eq25} are independent of $N$, and
$\prod_{i\ge1}(1+q_i^{-1/h})<\infty$.
Since $Q_k\le\eta T\le L$,
\[
 2^{(h/2)k^2}<Q_k\le L.
\]
Taking logarithms gives $k=O(\sqrt{\log L})$.

If $k<N$, then by the maximality of $k$ and \eqref{eq21},
\[
 \eta T<Q_{k+1}=Q_kq_{k+1},\qquad
 q_{k+1}>C_*Q_k^4,
\]
and hence
\[
 \eta T<Q_kq_{k+1}
 <C_*^{-1/4}q_{k+1}^{5/4}.
\]
It follows that $q_{k+1}>C_*^{1/5}(\eta T)^{4/5}$.
If $k+2\le N$, then $Q_{k+1}>\eta T$, and so
\[
 q_{k+2}>C_*Q_{k+1}^4>C_*\eta^4T^4>2L.
\]
The last inequality holds for all sufficiently large $L$, since
\[
 T\asymp L^\kappa,\qquad
 4\kappa>4p/h\ge2.
\]

Let $x\in X$. If $i\le k$, then \eqref{eq24} gives
\[
 x\not\equiv e_i\pmod{q_i}.
\]
If $i=k+1\le N$ and $q_i\le2L$, the same conclusion follows again from \eqref{eq24}.
In all remaining cases,
\[
 i=k+1\Longrightarrow q_i>2L,
 \qquad
 i\ge k+2\Longrightarrow q_i\ge q_{k+2}>2L,
\]
where the second implication follows from \eqref{eq21} and the preceding inequality. Hence
\[
 x,e_i\in[0,L],\qquad |x-e_i|\le L<q_i.
\]
If $x\equiv e_i\pmod{q_i}$, then $q_i\mid(x-e_i)$; together with
$|x-e_i|<q_i$, this gives $x=e_i$, contradicting $X\cap A_0=\varnothing$.
Combining this with \eqref{eq22} for $A_0$, we obtain
\[
 (A_0\cup X)\cap(e_i+q_i\mathbb Z)=\{e_i\}
 \qquad(1\le i\le N).
\]
Finally, suppose that $x,e\in B\subseteq[0,L]$ and $q>RL>L$.
If $x\equiv e\pmod q$, then
\[
 q\mid(x-e),\qquad |x-e|\le L<q.
\]
Therefore $x=e$, which proves the last assertion.
\end{proof}

\begin{lemma}\label{lem22}
Let $Q\ge1$ and $m\ge2$ be integers, let $\ell$ be an odd prime with $(Q,\ell)=1$, and let $x_1,\ldots,x_m,b\in\mathbb Z$. Then there exist
$t_1,\ldots,t_m\in\mathbb Z$ such that
\begin{equation}\label{eq26}
 \sum_{i=1}^m t_i=0,\qquad |t_i|\le m+1,\qquad
 x_i+t_iQ\not\equiv b\pmod\ell\quad(1\le i\le m).
\end{equation}
Consequently, for arbitrary finite sets $F_1,\ldots,F_m\subseteq\mathbb Z$, if
\[
 F_i^\dagger=
 \{x+tQ:x\in F_i,\ t\in\mathbb Z,\ |t|\le m+1,\ x+tQ\not\equiv b\pmod\ell\},
\]
then
\begin{align}
 F_1+\cdots+F_m&\subseteq F_1^\dagger+\cdots+F_m^\dagger,\label{eq27}\\
 |F_i^\dagger|&\le(2m+3)|F_i|.\label{eq28}
\end{align}
Moreover, for every $y\in F_1^\dagger+\cdots+F_m^\dagger$, there exists
$y_0\in F_1+\cdots+F_m$ such that
\begin{equation}\label{eq29}
 |y-y_0|\le m(m+1)Q.
\end{equation}
\end{lemma}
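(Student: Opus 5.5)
The plan is to prove the first assertion \eqref{eq26} by an explicit construction using the invertibility of $Q$ modulo $\ell$, and then read off \eqref{eq27}--\eqref{eq29} from it.

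\textbf{Step 1: constraints on each $t_i$.} Since $(Q,\ell)=1$, the map $t\mapsto x_i+tQ$ is a bijection on residues modulo $\ell$. Hence for each $i$ there is exactly one residue class $r_i\bmod\ell$ of $t$ with $x_i+tQ\equiv b\pmod\ell$. We must therefore choose integers $t_i\not\equiv r_i\pmod\ell$ with $\sum t_i=0$ and $|t_i|\le m+1$.

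\textbf{Step 2: choose $t_1,\dots,t_{m-1}$ greedily, then fix the last one.} Take $t_i\in\{0,1\}$ for $i\le m-1$. Since $\ell\ge3$, the two consecutive integers $0,1$ are distinct modulo $\ell$, so at least one of them avoids $r_i$. Put $s=t_1+\cdots+t_{m-1}\in[0,m-1]$. We would like $t_m=-s$, but this may be $\equiv r_m$. To repair this, allow a single adjustment. If $-s\equiv r_m$, pick some index $i<m$ and move $t_i$ to another admissible value, changing $s$ by $\pm1$ or $\pm2$. Choosing from $\{-1,0,1\}$ at index $i$ leaves at least two admissible values, because the three values are distinct modulo $\ell$ when $\ell\ge3$. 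We then need the new $-s'$ to avoid $r_m$. Since $s'\ne s$ and $|s'-s|\le2<\ell$, we have $s'\not\equiv s$, so $-s'\not\equiv r_m$. This holds for any change of $t_i$ to a different admissible value. All values then satisfy $|t_i|\le m$, which is within $m+1$. The one delicate point is ensuring that a different admissible value exists at index $i$. Among $\{-1,0,1\}$ only one is forbidden, so at least two are allowed, and one of them differs from the current $t_i$. This step uses $m\ge2$, so that an index $i<m$ exists. I expect this bookkeeping, rather than any deep idea, to be the only real obstacle.

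\textbf{Step 3: consequences.} For \eqref{eq27}, take $y=f_1+\cdots+f_m$ with $f_i\in F_i$. Apply \eqref{eq26} with $x_i=f_i$, and set $f_i^\dagger=f_i+t_iQ\in F_i^\dagger$. Then $\sum f_i^\dagger=y$ because $\sum t_i=0$. For \eqref{eq28}, each $x\in F_i$ yields at most $2(m+1)+1=2m+3$ elements $x+tQ$. For \eqref{eq29}, write $y=\sum(x_i+t_iQ)$ with $|t_i|\le m+1$, and set $y_0=\sum x_i$. Then $|y-y_0|\le\sum|t_i|Q\le m(m+1)Q$.
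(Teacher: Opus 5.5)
Your proof is correct and follows essentially the paper's argument. Both rely on three consecutive integers being pairwise incongruent modulo the odd prime $\ell$, so that one variable can absorb two forbidden residue classes while $t_m$ is forced by $\sum t_i=0$. The only cosmetic difference is that the paper picks $t_{m-1}\in\{0,1,2\}$ directly, avoiding both $c_{m-1}$ and $-s-c_m$, whereas you first pick every $t_i\in\{0,1\}$ and then repair one index within $\{-1,0,1\}$; your deductions of \eqref{eq27}--\eqref{eq29} are the same as the paper's.
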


\begin{proof}
Since $(Q,\ell)=1$, there exists an integer
$Q^{-1}\in[0,\ell-1]\cap\mathbb Z$ such that
\[
 QQ^{-1}\equiv1\pmod\ell.
\]
For each $1\le i\le m$, let $c_i\in[0,\ell-1]\cap\mathbb Z$ satisfy
\[
 c_i\equiv(b-x_i)Q^{-1}\pmod\ell.
\]
Since $\ell>2$, the residues of $0$ and $1$ modulo $\ell$ are distinct. Hence, for $1\le i\le m-2$,
\[
 \#\bigl(\{0,1\}\cap(c_i+\ell\mathbb Z)\bigr)\le1.
\]
Thus we may choose successively
\[
 t_i\in\{0,1\}\setminus(c_i+\ell\mathbb Z)
 \quad(1\le i\le m-2),
 \qquad
 s=\sum_{i=1}^{m-2}t_i,
 \qquad
 0\le s\le m-2.
\]
Put
\[
 \mathcal U=\{0,1,2\},
\qquad
 \mathcal U_1=\mathcal U\cap(c_{m-1}+\ell\mathbb Z),
\qquad
 \mathcal U_2=\mathcal U\cap(-s-c_m+\ell\mathbb Z).
\]
Again, since $\ell>2$, the three elements of $\mathcal U$ are pairwise distinct modulo $\ell$, while each of $\mathcal U_1$ and $\mathcal U_2$ is contained in a single residue class modulo $\ell$. Therefore
\[
 |\mathcal U_1|\le1,
 \qquad
 |\mathcal U_2|\le1,
 \qquad
 |\mathcal U\setminus(\mathcal U_1\cup\mathcal U_2)|\ge1.
\]
Choose
\[
 t_{m-1}\in\mathcal U\setminus(\mathcal U_1\cup\mathcal U_2),
 \qquad
 t_m=-s-t_{m-1}.
\]
Then
\begin{align*}
 \sum_{i=1}^m t_i
 &=s+t_{m-1}-s-t_{m-1}=0,\\
 |t_i|&\le1\le m+1\quad(1\le i\le m-2),\\
 |t_{m-1}|&\le2\le m+1,\\
 |t_m|&=s+t_{m-1}\le(m-2)+2=m\le m+1,\\
 t_i&\not\equiv c_i\pmod\ell\quad(1\le i\le m),\\
 x_i+t_iQ-b
 &\equiv Q(t_i-c_i)\not\equiv0\pmod\ell.
\end{align*}
Therefore, for every $(x_1,\ldots,x_m)\in\prod_{i=1}^mF_i$, the above choice of $t_i$ satisfies
\[
 (x_1+t_1Q,\ldots,x_m+t_mQ)\in\prod_{i=1}^mF_i^\dagger.
\]
Moreover,
\[
 \sum_{i=1}^m(x_i+t_iQ)
 =\sum_{i=1}^m x_i+Q\sum_{i=1}^m t_i
 =\sum_{i=1}^m x_i,
\]
and hence
\[
 F_1+\cdots+F_m\subseteq F_1^\dagger+\cdots+F_m^\dagger.
\]
By the definition,
\[
 |F_i^\dagger|
 \le\sum_{x\in F_i}|[-m-1,m+1]\cap\mathbb Z|
 =(2m+3)|F_i|.
\]
Finally, if $y\in F_1^\dagger+\cdots+F_m^\dagger$, then we may write
\[
 y=\sum_{i=1}^m(x_i+t_iQ),
 \quad
 y_0=\sum_{i=1}^m x_i,
 \quad
 |t_i|\le m+1.
\]
Thus
\[
 |y-y_0|
 =Q\left|\sum_{i=1}^m t_i\right|
 \le Q\sum_{i=1}^m|t_i|
 \le m(m+1)Q.
\]
\end{proof}

\begin{lemma}\label{lem23}
Let $d,m\ge1$ be integers, let $q_1,\ldots,q_d\ge2$ be pairwise coprime integers, and put
\[
 Q=\prod_{\nu=1}^d q_\nu.
\]
For $1\le i\le m$ and $1\le\nu\le d$, let
\[
 S_{\nu,i}\subseteq\mathbb Z_{q_\nu},
\]
and define
\begin{equation}\label{eq210}
 R_i=
 \left\{r\in[0,Q-1]\cap\mathbb Z:
 r\bmod q_\nu\in S_{\nu,i}\ (1\le\nu\le d)\right\}.
\end{equation}
Then
\begin{equation}\label{eq211}
 |R_i|=\prod_{\nu=1}^d|S_{\nu,i}|,
\end{equation}
and
\begin{equation}\label{eq212}
 (R_1+\cdots+R_m)\bmod Q
 =\left\{u\bmod Q:
 u\bmod q_\nu\in S_{\nu,1}+\cdots+S_{\nu,m}
 \ (1\le\nu\le d)\right\}.
\end{equation}
If $c_\nu\notin S_{\nu,i}$, then
\begin{equation}\label{eq213}
 R_i\cap(c_\nu+q_\nu\mathbb Z)=\varnothing.
\end{equation}
In particular, if $j$ is fixed and
\[
 S_{\nu,1}+\cdots+S_{\nu,m}
 =\mathbb Z_{q_\nu}
 \qquad(\nu\ne j),
\]
then
\begin{equation}\label{eq214}
 (R_1+\cdots+R_m)\bmod Q
 =\left\{u\bmod Q:
 u\bmod q_j\in S_{j,1}+\cdots+S_{j,m}\right\}.
\end{equation}
\end{lemma}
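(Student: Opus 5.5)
The whole lemma follows from the Chinese remainder theorem. Since $q_1,\ldots,q_d$ are pairwise coprime, the reduction map
\[
 \phi:\mathbb Z_Q\to\prod_{\nu=1}^d\mathbb Z_{q_\nu},\qquad r\mapsto(r\bmod q_1,\ldots,r\bmod q_d),
\]
is a ring isomorphism, and in particular an isomorphism of additive groups. Identifying $[0,Q-1]\cap\mathbb Z$ with $\mathbb Z_Q$, definition \eqref{eq210} says exactly that $R_i=\phi^{-1}\bigl(\prod_{\nu}S_{\nu,i}\bigr)$. Since $\phi$ is a bijection, \eqref{eq211} follows immediately.

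For \eqref{eq212} we use that $\phi$ is additive and that Minkowski sums of product sets are computed coordinatewise:
\[
 \phi\bigl((R_1+\cdots+R_m)\bmod Q\bigr)=\sum_{i=1}^m\prod_\nu S_{\nu,i}=\prod_\nu\bigl(S_{\nu,1}+\cdots+S_{\nu,m}\bigr).
\]
The second equality needs checking in both directions.
\begin{itemize}
\item[($\subseteq$)] A sum of $m$ tuples, the $i$-th taken from $\prod_\nu S_{\nu,i}$, has its $\nu$-th coordinate in $S_{\nu,1}+\cdots+S_{\nu,m}$.
\item[($\supseteq$)] Given $(u_\nu)_\nu$ with each $u_\nu=s_{\nu,1}+\cdots+s_{\nu,m}$ and $s_{\nu,i}\in S_{\nu,i}$, set $x_i=(s_{\nu,i})_\nu\in\prod_\nu S_{\nu,i}$. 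Then $\sum_i x_i=(u_\nu)_\nu$.
\end{itemize}
This reverse inclusion is the one substantive step: the representations may be chosen independently for each $\nu$, and CRT reassembles them. Pulling back by $\phi^{-1}$ gives \eqref{eq212}. Here $u\bmod q_\nu$ is well defined for $u\in\mathbb Z_Q$ because $q_\nu\mid Q$.

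For \eqref{eq213}: if $r\in R_i$ and $r\equiv c_\nu\pmod{q_\nu}$, then $c_\nu=r\bmod q_\nu\in S_{\nu,i}$, which is a contradiction.

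For \eqref{eq214}: if the sumset condition holds for every $\nu\ne j$, those constraints in \eqref{eq212} are vacuous, and only the condition at $\nu=j$ remains.

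No real obstacle is expected. The only care needed is to distinguish integer sumsets from sumsets modulo $Q$, which is why the reductions $\bmod Q$ are kept explicit throughout.
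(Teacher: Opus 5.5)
Your proposal is correct and follows the paper's proof essentially verbatim. Both arguments identify $R_i$ with the preimage of $\prod_\nu S_{\nu,i}$ under the CRT additive isomorphism $\mathbb Z_Q\to\prod_\nu\mathbb Z_{q_\nu}$, compute the sumset coordinatewise via additivity, and read off \eqref{eq213} and \eqref{eq214} directly; your explicit two-sided check of $\sum_i\prod_\nu S_{\nu,i}=\prod_\nu\sum_i S_{\nu,i}$ just spells out a step the paper states without comment.
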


\begin{proof}
By the Chinese remainder theorem, the map
\[
 \Phi:\mathbb Z_{Q}
 \longrightarrow
 \prod_{\nu=1}^d\mathbb Z_{q_\nu},
 \qquad
 \Phi(u)=(u\bmod q_1,\ldots,u\bmod q_d)
\]
is a group isomorphism. By \eqref{eq210},
\[
 \Phi(R_i\bmod Q)=\prod_{\nu=1}^d S_{\nu,i}.
\]
Since distinct integers in $[0,Q-1]\cap\mathbb Z$ represent distinct residue classes modulo $Q$,
\[
 |R_i|=|R_i\bmod Q|
 =\left|\prod_{\nu=1}^dS_{\nu,i}\right|
 =\prod_{\nu=1}^d|S_{\nu,i}|,
\]
which proves \eqref{eq211}. Since $\Phi$ preserves addition,
\begin{align*}
 \Phi((R_1+\cdots+R_m)\bmod Q)
 &=\Phi(R_1\bmod Q)+\cdots+\Phi(R_m\bmod Q)\\
 &=\prod_{\nu=1}^d
 \bigl(S_{\nu,1}+\cdots+S_{\nu,m}\bigr).
\end{align*}
Applying $\Phi^{-1}$ gives \eqref{eq212}.

If $c_\nu\notin S_{\nu,i}$, then for every $r\in R_i$ the definition gives
\[
 r\bmod q_\nu\in S_{\nu,i},
\]
and hence
\[
 r\not\equiv c_\nu\pmod{q_\nu}.
\]
Thus \eqref{eq213} holds.

Finally, if
\[
 S_{\nu,1}+\cdots+S_{\nu,m}
 =\mathbb Z_{q_\nu}
\]
for every $\nu\ne j$, then in \eqref{eq212} there is no restriction on any coordinate except the $j$th one. This gives \eqref{eq214}.
\end{proof}

\begin{lemma}\label{lem24}
Let $q\ge5$ be a prime, let $2\le d\le h$, let $c\in\mathbb Z_q$, and put
$m=\lceil q^{1/d}\rceil$. Then there exist sets
$S_0,\ldots,S_{d-1}\subseteq\mathbb Z_q\setminus\{c\}$ such that
\[
 |S_i|=m<q\quad(0\le i<d),\qquad
 S_0+\cdots+S_{d-1}=\mathbb Z_q.
\]
\end{lemma}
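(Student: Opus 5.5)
The plan is to realize $\mathbb Z_q$ as a sum of $d$ ``digit sets'' in base $m$, then pad and translate each set so that it has exactly $m$ elements and avoids $c$. Translation and enlargement can only preserve the property that the sumset is all of $\mathbb Z_q$, so both adjustments are harmless.

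First, check that $m<q$. Since $d\ge2$ and $q\ge5$,
\[
m=\lceil q^{1/d}\rceil\le\lceil q^{1/2}\rceil<q^{1/2}+1<q,
\]
where the last inequality holds because $q\ge5$. Next, for $0\le i<d$, let
\[
D_i=\{\,j m^i \bmod q : 0\le j\le m-1\,\}\subseteq\mathbb Z_q .
\]
Every integer $n\in[0,m^d-1]$ has a base-$m$ expansion $n=\sum_{i<d} j_i m^i$ with digits $0\le j_i\le m-1$. Hence $D_0+\cdots+D_{d-1}$ contains the residues of all of $[0,m^d-1]$. Since $m^d\ge q$, this interval contains $[0,q-1]$, and so
\[
D_0+\cdots+D_{d-1}=\mathbb Z_q .
\]

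The sets $D_i$ have at most $m$ elements, but reduction modulo $q$ may cause collisions for large $i$, so some $D_i$ may have fewer than $m$ elements. I do not want to analyse exactly when collisions happen. Instead, I enlarge each $D_i$ by arbitrary further residues to a set $D_i'\supseteq D_i$ with $|D_i'|=m$; this is possible because $m<q$. Enlarging summands can only enlarge the sumset, so we still have $D_0'+\cdots+D_{d-1}'=\mathbb Z_q$.

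Finally, to avoid $c$, choose for each $i$ a shift $a_i\in\mathbb Z_q$ with $c\notin a_i+D_i'$, that is, $a_i\notin c-D_i'$. Such an $a_i$ exists because $|c-D_i'|=m<q$. Put $S_i=a_i+D_i'$. Then $|S_i|=m$, $S_i\subseteq\mathbb Z_q\setminus\{c\}$, and
\[
S_0+\cdots+S_{d-1}=(a_0+\cdots+a_{d-1})+\mathbb Z_q=\mathbb Z_q .
\]

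The only point needing care is the inequality $m<q$, which is used both for the padding and for the existence of the shifts. It follows from $d\ge2$ and $q\ge5$, and this is exactly where those hypotheses enter. Primality of $q$ is not needed for this lemma.
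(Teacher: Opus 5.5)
Your proof is correct and follows the paper's route: the base-$m$ digit sets $D_i=\{tm^i \bmod q: 0\le t<m\}$ cover $[0,m^d-1]\supseteq[0,q-1]$, and each set is then translated to avoid $c$. The one difference is that the paper uses primality ($q\nmid m$) to show each $D_i$ already has exactly $m$ elements, whereas you pad the $D_i$ to size $m$, which indeed makes primality unnecessary.
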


\begin{proof}
For $0\le i<d$, let
\[
 D_i=\{tm^i\bmod q:t\in\mathbb Z,\ 0\le t<m\}\subseteq\mathbb Z_q.
\]
Since $q\ge5$ and $d\ge2$,
\[
 m^d\ge q,\qquad 2\le m\le\lceil\sqrt q\rceil<q.
\]
Since $q$ is prime and $m<q$, if
\[
 (t-t')m^i\equiv0\pmod q,\qquad 0\le t,t'<m,
\]
then $q\nmid m$ gives $q\mid(t-t')$. Since $|t-t'|<q$, it follows that
$t=t'$, and hence $|D_i|=m$. Moreover,
\[
 D_0+\cdots+D_{d-1}
 \supseteq[0,m^d-1]\pmod q
 =\mathbb Z_q,
 \qquad |D_i|=m<q.
\]
Thus $c-D_i\ne\mathbb Z_q$. Choose $v_i\notin c-D_i$ and put
\[
 S_i=D_i+v_i,
 \qquad c\notin S_i,
 \qquad |S_i|=m.
\]
Finally,
\[
 S_0+\cdots+S_{d-1}
 =(D_0+\cdots+D_{d-1})+\sum_{i=0}^{d-1}v_i
 =\mathbb Z_q.
\]
\end{proof}

\begin{lemma}\label{lem25}
Let $m\ge2$, $M\ge2$, and $Q\ge1$ be integers, and let
$R_1,\ldots,R_m\subseteq[0,Q-1]\cap\mathbb Z$ be nonempty. Put
\[
 \Omega=(R_1+\cdots+R_m)\bmod Q,
 \qquad
 S=QM^m,
 \qquad
 F_i=R_i+QM^{i-1}[0,M-1].
\]
Then
\begin{align}
 [mQ,S-Q]\cap\{n:n\bmod Q\in\Omega\}
 &\subseteq F_1+\cdots+F_m,\label{eq215}\\
 F_1+\cdots+F_m&\subseteq[0,S+(m-1)Q],\label{eq216}\\
 |F_i|&=M|R_i|.\label{eq217}
\end{align}
Let $c,C,X>0$, let $u_i,v_i,u,v\in\mathbb Z$, and suppose that the integer intervals
$V_i=[u_i,v_i]$ and $U=[u,v]$ satisfy
\begin{equation}\label{eq218}
 |V_i|\ge cX,
 \qquad |U|\le CX,
 \qquad U+[-cX,cX]\subseteq V_1+\cdots+V_m.
\end{equation}
Let $K\ge0$ be fixed. Put
\[
 \zeta_0=\frac c8,\qquad
 0<\zeta\le\zeta_0,\qquad
 \eta_0=\min\left\{
 \frac{\zeta}{2^{m+3}(m+7)},
 \frac{c}{8(mK+4m+10)}\right\}.
\]
After $\zeta$ is fixed, if
\[
 0<\eta\le\eta_0,
 \quad Q\le\eta X,
 \quad M=\left\lfloor(\zeta X/Q)^{1/m}\right\rfloor,
\]
then $M\ge2$, and there exist $N=O_{c,C,m,K,\zeta}(1)$ families of integers
$b_{i,\nu}$ such that
\begin{align}
 F_i+b_{i,\nu}Q&\subseteq[u_i+KQ,v_i-KQ],\label{eq219}\\
 U\cap\{n:n\bmod Q\in\Omega\}
 &\subseteq
 \bigcup_{\nu=1}^N\sum_{i=1}^m(F_i+b_{i,\nu}Q),\label{eq220}\\
 \left|\bigcup_{\nu=1}^N(F_i+b_{i,\nu}Q)\right|
 &\ll M|R_i|.\label{eq221}
\end{align}
\end{lemma}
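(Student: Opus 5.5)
The plan has two parts: first the base-$M$ covering statements \eqref{eq215}--\eqref{eq217}, then a translation and tiling argument giving \eqref{eq219}--\eqref{eq221}.

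\textbf{The mixed-radix part.} Write $n\in[mQ,S-Q]$ with $n\bmod Q\in\Omega$. Choose $r_i\in R_i$ with $r_1+\cdots+r_m\equiv n\pmod Q$. Since $0\le r_1+\cdots+r_m\le m(Q-1)$, the number
\[
 n'=\frac{n-\sum r_i}{Q}
\]
is an integer satisfying $0\le n'$, because $n\ge mQ$. Also $n'\le (S-Q)/Q<M^m$. Expand $n'=\sum_{i=1}^m a_iM^{i-1}$ in base $M$ with digits $a_i\in[0,M-1]$. Then $n=\sum_i(r_i+QM^{i-1}a_i)\in F_1+\cdots+F_m$, which gives \eqref{eq215}.

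For \eqref{eq216}, note that $\max F_i\le Q-1+QM^{i-1}(M-1)$. Summing over $i$, the terms $QM^{i-1}(M-1)$ telescope to $Q(M^m-1)$, so the total is at most $S+(m-1)Q$. For \eqref{eq217}, the elements $r+QM^{i-1}a$ are distinct: $r<Q$ is recovered as the residue mod $Q$, and then $a$ is determined.

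\textbf{Showing $M\ge2$.} We have $\zeta X/Q\ge\zeta/\eta\ge 2^{m+3}(m+7)>2^m$, so $M\ge2$. Moreover $M^m\le\zeta X/Q$, hence $S\le\zeta X$. Also $M^m\ge(\zeta X/Q)/2^m$ because $\lfloor y\rfloor\ge y/2$, so $S\ge\zeta X/2^m$. Thus each block $\sum_i F_i$ has diameter $\asymp\zeta X$, and it covers a full-residue window of length $S-(m+1)Q\ge S/2$.

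\textbf{The covering part.} The key freedom is that translates $F_i+b_iQ$ preserve residues mod $Q$. Translating by $(b_1,\dots,b_m)$ therefore shifts the covered window $[mQ,S-Q]$ by $Q\sum b_i$.

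1. Choose left endpoints $w_i\in V_i$ and a target position $t$ as follows. For each target point $t\in U$, the hypothesis $U+[-cX,cX]\subseteq\sum V_i$ says that $t$ lies deep inside $\sum V_i$. Decompose $t-\lceil S/2\rceil$ as $\sum_i w_i$ with each $w_i\in[u_i+KQ,\,v_i-KQ-S-Q]$. This is possible because each $V_i$ has length at least $cX\ge 8\zeta X$, which exceeds $S+(2K+2)Q$ since $\eta\le c/(8(mK+4m+10))$, and the sumset of the shrunken intervals is still an interval containing $t-S/2$. This last point follows from the $cX$ margin; the shrinking loses only $m(S+(2K+2)Q)\le m\zeta X+\dots$. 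I expect the slack bookkeeping here to be the main obstacle, in particular making the constants $\zeta_0=c/8$ and $\eta_0$ work simultaneously with $m$ intervals. If necessary I would shrink only one coordinate at a time and use the fact that sums of intervals are intervals.

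2. Round: set $b_i=\lfloor w_i/Q\rfloor$. The total rounding error is at most $mQ$, which is absorbed by the window margin $S/2-2mQ\ge S/4$ (note $Q\le\eta X\ll S$). Hence each block covers $[t-S/4,\,t+S/4]$ in the right residues. Since we need $F_i+b_iQ\subseteq[u_i+KQ,v_i-KQ]$, the $KQ$ buffers must be built into step 1.

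3. Place the targets $t$ on a grid in $U$ with spacing $S/4\ge\zeta X/2^{m+2}$. This requires $N\le 4CX/S+1=O_{C,m,\zeta}(1)$ blocks, which gives \eqref{eq220}.

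4. Finally, \eqref{eq221} follows from $|\bigcup_\nu(F_i+b_{i,\nu}Q)|\le N|F_i|=NM|R_i|$ by \eqref{eq217}.
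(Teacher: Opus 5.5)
Your mixed-radix part (\eqref{eq215}--\eqref{eq217}) and the bounds $M\ge2$ and $2^{-m}\zeta X\le S\le\zeta X$ are correct. Your overall covering strategy is also the paper's: shift each $F_i$ by a multiple of $Q$, which preserves residues, and cover $U$ by $O(1)$ overlapping windows of length $\asymp S$. The gap is in Step~1.

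You shrink \emph{every} $V_i$ on the right by $S+Q$ (plus $KQ$). The sumset of the shrunken intervals therefore ends about $mS$ before $\sum_i v_i$, while \eqref{eq218} only guarantees $\sum_i v_i\ge v+\lfloor cX\rfloor$, and this can be tight. To decompose $t-\lceil S/2\rceil$ for $t$ near $v$ you need roughly $(m-\tfrac12)S+m(K+1)Q\le cX$. But $\zeta$ may equal $c/8$, and $S=QM^m$ can equal $\zeta X$ (whenever $(\zeta X/Q)^{1/m}$ is an integer). Putting $t$ at the far right of its window gains at most $S/2$. So under your constraints the point $v$ is not even in $\sum_i(F_i+b_iQ)$ once $(m-1)S\gtrsim cX$, for example for $m\ge10$ in this extreme case. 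The lemma is stated, and later used with $m$ up to $h$, for all $m\ge2$ with $\zeta_0=c/8$ independent of $m$. Your estimate ``$m(S+(2K+2)Q)\le m\zeta X+\cdots$'' therefore does not close. You flagged this as the likely obstacle, and your fallback points the right way, but the concrete fact is missing.

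The missing observation, which you already proved for \eqref{eq216}, is that only $F_m$ has size comparable to $S$. Indeed $\max F_i\le Q-1+QM^{i-1}(M-1)\le QM^i-1$, so $\sum_i\max F_i\le S+(m-1)Q$. Shrink $V_i$ on the right by $\max F_i+Q$ instead of $S+Q$. Then the shrunken sumset reaches at least $\sum_i v_i-S-(mK+2m)Q$. Since $S\le cX/8$ and $(mK+4m+10)Q\le cX/8$, the margin $cX$ now suffices uniformly in $m$.

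The paper builds this in from the start. It takes $B_i$ to be the integer interval of shifts $b$ with $F_i+bQ\subseteq[u_i+KQ,v_i-KQ]$, defined through $f_i^\pm=\min F_i,\max F_i$. It then notes that $B_1+\cdots+B_m=[A,B]$ is an integer interval. Using $\sum_i f_i^+\le S+(m-1)Q\le 2S$, it gets $QA+mQ\le u$ and $QB+S-Q\ge v$, and finally keeps $O(1)$ values $z=\sum_i b_i$ spaced by $\asymp\Lambda/Q$.

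A smaller fix is needed in Step~2. With $b_i=\lfloor w_i/Q\rfloor$ you only have $b_iQ>w_i-Q$, so $\min(F_i+b_iQ)$ can fall up to $Q-1$ below $u_i+KQ$ (for instance if $0\in R_i$), violating \eqref{eq219}. Use $\lceil w_i/Q\rceil$ instead; your right-hand buffer of $Q$ absorbs the extra amount.
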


\begin{proof}
To prove \eqref{eq215}, let
\[
 n\in[mQ,S-Q],
 \qquad
 n\bmod Q\in\Omega.
\]
By the definition of $\Omega$, there exists
$(r_1,\ldots,r_m)\in\prod_{i=1}^mR_i$ such that
\[
 n\equiv\sum_{i=1}^m r_i\pmod Q.
\]
Since
\[
 0\le r_i\le Q-1,
 \qquad
 0\le\sum_{i=1}^m r_i\le m(Q-1)<mQ,
\]
and $n\equiv\sum_i r_i\pmod Q$, we have
\[
 v=\frac{n-\sum_{i=1}^m r_i}{Q}\in\mathbb Z,
\]
with
\begin{align*}
 v&\ge\frac{mQ-m(Q-1)}Q=\frac mQ>0,\\
 v&\le\frac{S-Q}{Q}=M^m-1.
\end{align*}
Hence
\[
 0\le v<M^m.
\]
By the base-$M$ expansion,
\[
 v=\sum_{i=1}^m d_iM^{i-1},
 \qquad
 0\le d_i\le M-1.
\]
Therefore
\[
 n=\sum_{i=1}^m(r_i+QM^{i-1}d_i)
 \in F_1+\cdots+F_m.
\]
By the definition of the sets $F_i$,
\[
 \max(F_1+\cdots+F_m)
 \le m(Q-1)+Q(M-1)\sum_{i=0}^{m-1}M^i.
\]
Moreover,
\begin{align*}
 m(Q-1)+Q(M-1)\sum_{i=0}^{m-1}M^i
 &=mQ-m+Q(M^m-1)\\
 &=QM^m+(m-1)Q-m\\
 &<S+(m-1)Q.
\end{align*}
If
\[
 r+QM^{i-1}d=r'+QM^{i-1}d',
 \qquad
 r,r'\in[0,Q-1],\quad 0\le d,d'<M,
\]
then reduction modulo $Q$ gives
\[
 r\equiv r'\pmod Q.
\]
Since $0\le r,r'\le Q-1$, we obtain $r=r'$, and substitution back into the equality gives $d=d'$.
Thus every element has a unique representation and
\[
 |F_i|=M|R_i|.
\]

For the placement assertion, by definition,
\[
 M=\left\lfloor(\zeta X/Q)^{1/m}\right\rfloor,
 \qquad
 S=QM^m.
\]
Since $Q\le\eta X$ and $\eta\le\eta_0<\zeta/2^m$, we have
$(\zeta X/Q)^{1/m}\ge2$, and hence
\[
 \frac12(\zeta X/Q)^{1/m}\le M\le(\zeta X/Q)^{1/m}.
\]
Consequently,
\[
 2^{-m}\zeta X\le S\le\zeta X.
\]
Also, by the definition of $\eta_0$,
\[
 \frac SQ\ge\frac{2^{-m}\zeta X}{\eta X}
 \ge8(m+7).
\]
Put
\[
 f_i^-=\min F_i,
 \qquad
 f_i^+=\max F_i.
\]
To ensure $F_i+b_iQ\subseteq V_i$, define
\[
 B_i=\left[
 \left\lceil\frac{u_i+KQ-f_i^-}{Q}\right\rceil,
 \left\lfloor\frac{v_i-KQ-f_i^+}{Q}\right\rfloor
 \right]\cap\mathbb Z.
\]
Since
\[
 f_i^+-f_i^-\le f_i^+\le S+(m-1)Q,
\]
and
\[
 |V_i|\ge cX,
 \quad S\le\zeta X,
 \quad Q\le\eta X,
\]
we obtain, using $v_i-u_i=|V_i|-1$, $f_i^+-f_i^-\le S+(m-1)Q$, and $Q\ge1$,
\[
 |B_i|\ge\frac{cX-S}{Q}-(2K+m+4)
 \ge\frac{7cX}{8Q}-(2K+m+4)>1.
\]
The last inequality follows from
$Q\le\eta_0X\le cX/[8(mK+4m+10)]$. Therefore
\[
 |B_i|\ge1.
\]
Write
\[
 B_i=[A_i,B_i^+]\cap\mathbb Z,
 \qquad
 A=\sum_{i=1}^m A_i,
 \qquad
 B=\sum_{i=1}^m B_i^+.
\]
Then
\[
 B_1+\cdots+B_m=[A,B]\cap\mathbb Z.
\]
Hence, for every $z\in[A,B]\cap\mathbb Z$, there exists
$(b_1,\ldots,b_m)\in\prod_{i=1}^mB_i$ such that
\[
 \sum_{i=1}^m b_i=z.
\]
It follows that
\[
 [Qz+mQ,Qz+S-Q]\cap\{n:n\bmod Q\in\Omega\}
 \subseteq\sum_{i=1}^m(F_i+b_iQ).
\]
By \eqref{eq216} and $S/Q\ge8(m+7)$,
\[
 \sum_{i=1}^m f_i^+
 =\max(F_1+\cdots+F_m)
 \le S+(m-1)Q\le2S.
\]
Moreover,
\begin{align*}
 QA+mQ
 &\le\sum_{i=1}^m(u_i+KQ-f_i^-+Q)+mQ\\
 &\le\sum_{i=1}^m u_i+(mK+2m)Q,\\
 QB+S-Q
 &\ge\sum_{i=1}^m(v_i-KQ-f_i^+-Q)+S-Q\\
 &\ge\sum_{i=1}^m v_i-S-(mK+m+1)Q.
\end{align*}
On the other hand, \eqref{eq218} gives
\[
 \sum_{i=1}^m u_i\le u-\lfloor cX\rfloor,\qquad
 \sum_{i=1}^m v_i\ge v+\lfloor cX\rfloor.
\]
Since $\zeta\le c/8$ and $\eta_0\le c/[8(mK+4m+10)]$,
\[
 S+(mK+2m+1)Q
 \le\left(\frac c8+\frac{c(mK+2m+1)}{8(mK+4m+10)}\right)X
 <cX.
\]
Together with $\lfloor cX\rfloor>cX-1$, this yields
\[
 QA+mQ<u+1,\qquad QB+S-Q>v-1.
\]
Since $QA+mQ,QB+S-Q,u,v$ are all integers,
\[
 QA+mQ\le u,\qquad QB+S-Q\ge v.
\]
Put
\[
 \Lambda=S-(m+1)Q,
 \qquad
 \Lambda\ge\frac12S\ge2^{-m-1}\zeta X.
\]
For
\[
 \mathcal W_z=[Qz+mQ,Qz+S-Q],
\]
we have
\[
 \mathcal W_z\cap\mathcal W_{z+1}\ne\varnothing
 \quad(\Lambda\ge Q).
\]
Since $QA+mQ\le u$, $QB+S-Q\ge v$, and adjacent intervals overlap,
\[
 U\subseteq\bigcup_{z=A}^{B}\mathcal W_z.
\]
Put
\[
 g=\left\lfloor\frac{\Lambda}{2Q}\right\rfloor,
 \qquad
 \frac{\Lambda}{3}\le Qg\le\frac{\Lambda}{2}
 \quad(\Lambda/Q\ge6).
\]
Since $QB+S-Q\ge v\ge u$, define
\[
 z_-:=\min\{z\in[A,B]\cap\mathbb Z:Qz+S-Q\ge u\},
\]
\[
 z_+:=\min\{z\in[A,B]\cap\mathbb Z:Qz+S-Q\ge v\}.
\]
Then
\[
 A\le z_-\le z_+\le B,
\]
\[
 Qz_-+mQ\le u\le Qz_-+S-Q,
 \qquad
 Qz_++mQ\le v\le Qz_++S-Q,
\]
and
\[
 0\le z_+-z_-,
 \qquad Q(z_+-z_-)\le v-u+Q.
\]
Let
\[
 N:=1+\left\lceil\frac{z_+-z_-}{g}\right\rceil,
 \qquad
 z_\nu:=\min\{z_-+(\nu-1)g,z_+\}
 \quad(1\le\nu\le N).
\]
Then
\[
 z_1=z_-,\qquad z_N=z_+,
 \qquad z_\nu\in[A,B]\cap\mathbb Z,
\]
and
\[
 0\le z_{\nu+1}-z_\nu\le g,
 \qquad
 Q(z_{\nu+1}-z_\nu)\le Qg\le\frac{\Lambda}{2}<\Lambda
 \quad(1\le\nu<N).
\]
Therefore
\[
 \mathcal W_{z_\nu}\cap\mathcal W_{z_{\nu+1}}\ne\varnothing
 \quad(1\le\nu<N),
 \qquad
 U\subseteq\bigcup_{\nu=1}^N\mathcal W_{z_\nu}.
\]
Since
\[
 z_\nu\in[A,B]\cap\mathbb Z=B_1+\cdots+B_m,
\]
for each $1\le\nu\le N$ we may choose
\[
 (b_{1,\nu},\ldots,b_{m,\nu})\in B_1\times\cdots\times B_m,
 \qquad
 \sum_{i=1}^m b_{i,\nu}=z_\nu.
\]
Furthermore,
\[
 N=1+\left\lceil\frac{z_+-z_-}{g}\right\rceil
 \le3+\frac{|U|}{Qg}
 \le3+\frac{3CX}{\Lambda}
 \le3+\frac{3\cdot2^{m+1}C}{\zeta}
 =O_{c,C,m,K,\zeta}(1).
\]
Finally,
\[
 \bigcup_{\nu=1}^N(F_i+b_{i,\nu}Q)
 \subseteq V_i,
 \qquad
 \left|\bigcup_{\nu=1}^N(F_i+b_{i,\nu}Q)\right|
 \le N|F_i|
 \ll M|R_i|.
\]
\end{proof}

\begin{lemma}\label{lem26}
Let $2\le m\le h$, and let the real intervals $\mathcal I_i=[a_i,b_i]$ satisfy
\[
 a_i<b_i\quad(1\le i\le m),\qquad
 \sum_{i=1}^ma_i<A\le B<\sum_{i=1}^mb_i.
\]
Put
\[
 D=\sum_{i=1}^m(b_i-a_i),\quad
 w=\min_i(b_i-a_i),\quad
 d=\min\left\{A-\sum_i a_i,\sum_i b_i-B\right\}.
\]
For any prescribed $0<c\le dw/(8D)$, one may take $C=3c$.
There are $N\le2+(B-A)/c$ real closed intervals $U_j$ and, for each $i$,
real closed intervals $V_{i,j}$ $(1\le j\le N)$ such that
\[
 [A,B]\subseteq\bigcup_j\operatorname{int}U_j,
 \qquad V_{i,j}\subseteq\operatorname{int}\mathcal I_i,
 \qquad U_j\subseteq\operatorname{int}\sum_{i=1}^mV_{i,j}.
\]
If $Xc\ge1$, put $U_j(X)=XU_j\cap\mathbb Z$ and
$V_{i,j}(X)=XV_{i,j}\cap\mathbb Z$. Then
\begin{equation}\label{eq222}
 \begin{gathered}
 \bigl[XA,XB\bigr]\cap\mathbb Z\subseteq\bigcup_jU_j(X),\\
 |V_{i,j}(X)|\ge cX,\qquad |U_j(X)|\le CX,\\
 U_j(X)+([-cX,cX]\cap\mathbb Z)
 \subseteq\sum_{i=1}^mV_{i,j}(X).
 \end{gathered}
\end{equation}
\end{lemma}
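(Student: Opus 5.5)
We cover $[A,B]$ by short windows, and for each window we shrink the intervals $\mathcal I_i$ to subintervals whose sum is centred on that window. Throughout, write $\Sigma_a=\sum_i a_i$, $\Sigma_b=\sum_i b_i$ and $\ell_i=b_i-a_i$. Then $d\le A-\Sigma_a\le D$, so $c\le dw/(8D)\le w/8$.

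\textbf{Centres and windows.} Put $N=\lfloor (B-A)/c\rfloor+2$, so that $N\le 2+(B-A)/c$. Choose equally spaced points $s_1=A<s_2<\dots<s_N=B$, with spacing $\le c$ when $N\ge2$; if $A=B$, take all $s_j=A$. Let $U_j=[s_j-c,s_j+c]$. Its length is $2c\le 3c=C$, and consecutive windows overlap in their interiors, so $[A,B]\subseteq\bigcup_j\operatorname{int}U_j$.

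\textbf{Choice of $V_{i,j}$.} For each centre $s=s_j\in[A,B]$, let $\lambda=(s-\Sigma_a)/D$. Then
\[
 d/D\le\lambda\le 1-d/D .
\]
Put $p_i=a_i+\lambda\ell_i$, so that $\sum_i p_i=s$ and $p_i$ lies at distance at least $\ell_i d/D\ge wd/D\ge 8c$ from both endpoints of $\mathcal I_i$. Define $V_{i,j}=[p_i-2c,\,p_i+2c]$. This lies inside $\operatorname{int}\mathcal I_i$ because $2c<8c$. Moreover
\[
 \sum_i V_{i,j}=[s-2mc,\,s+2mc],
\]
and since $m\ge2$ its interior contains $[s-2c,s+2c]\supseteq U_j$ with room to spare. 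This gives the three real-interval inclusions.

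\textbf{Discretisation.} Assume $Xc\ge1$. Then $XA,XB\in\bigcup_j X\operatorname{int}U_j$, so
\[
 [XA,XB]\cap\mathbb Z\subseteq\bigcup_j U_j(X).
\]
The set $V_{i,j}(X)$ is the integer interval in a real interval of length $4cX\ge4$, so it has at least $4cX-1\ge cX$ points. The set $U_j(X)$ has at most $2cX+1\le 3cX=CX$ points.

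For the last inclusion in \eqref{eq222}, take $n\in U_j(X)$ and an integer $t$ with $|t|\le cX$. Then $n+t\in[X(s-2c),X(s+2c)]$. Now $\sum_i V_{i,j}(X)$ is the set of integers in $\sum_i [X(p_i-2c),X(p_i+2c)]$ that are sums of integer points of the factors. A sum of nonempty integer intervals is the full integer interval from the sum of the left endpoints to the sum of the right endpoints. Each left endpoint is at most $X(p_i-2c)+1$, so the sum of left endpoints is at most $Xs-2mcX+m\le X(s-2c)$; this needs $(2m-2)cX\ge m$, which holds because $cX\ge1$ and $m\ge2$. The right endpoints are handled symmetrically. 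Hence $n+t$ lies in the sum, which proves the inclusion.

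\textbf{Main obstacle.} The only delicate point is this integer rounding, which loses up to one unit per interval. The slack available is the gap between the ranges $\pm2mcX$ and $\pm2cX$, and it absorbs the loss because $cX\ge1$; everything else is routine.
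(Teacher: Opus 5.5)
Your proof is correct and follows essentially the same route as the paper: the same convex parametrisation $\lambda=(s-\sum_i a_i)/D$, the same windows $U_j=[t_j-c,t_j+c]$ around equally spaced centres, $V_{i,j}$ centred at $a_i+\lambda(b_i-a_i)$, and the same endpoint-rounding argument for the sumset inclusion. The differences are only cosmetic: you use radius $2c$ instead of the paper's $4c$, which leaves less slack but still suffices since $(2m-2)cX\ge m$ for $m\ge2$ and $cX\ge1$, and you count $N$ slightly differently.
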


\begin{proof}
For each $t\in[A,B]$, put
\[
 \lambda_t=\frac{t-\sum_i a_i}{D}\in(0,1),
 \qquad x_i(t)=a_i+\lambda_t(b_i-a_i)\in(a_i,b_i).
\]
By definition,
\[
 \sum_{i=1}^m x_i(t)
 =\sum_i a_i+\lambda_t\sum_i(b_i-a_i)=t.
\]
Also,
\[
 \min\{\lambda_t,1-\lambda_t\}\ge d/D,\qquad
 \min_i\{x_i(t)-a_i,b_i-x_i(t)\}\ge dw/D\ge8c>4c.
\]
If $A=B$, take $N=1$ and $t_1=A$.
Otherwise, take
\[
 N=1+\left\lceil\frac{B-A}{c}\right\rceil,
 \qquad t_j=A+\frac{j-1}{N-1}(B-A)\quad(1\le j\le N).
\]
Then
\[
 N\le2+(B-A)/c,\qquad
 [A,B]\subseteq\bigcup_{j=1}^N(t_j-c,t_j+c).
\]
Put $x_{i,j}=x_i(t_j)$ and
\[
 U_j=[t_j-c,t_j+c],\qquad
 V_{i,j}=[x_{i,j}-4c,x_{i,j}+4c],\qquad C=3c.
\]
Since $\sum_i x_{i,j}=t_j$ and $m\ge2$,
\[
 V_{i,j}\subseteq\operatorname{int}\mathcal I_i,\qquad
 U_j\subseteq\operatorname{int}\sum_iV_{i,j}.
\]
If $Xc\ge1$, then
\[
 |V_{i,j}(X)|\ge8cX-1\ge cX,
 \qquad |U_j(X)|\le2cX+1\le CX.
\]
Since the sum of integer intervals is again an integer interval,
\begin{align*}
 \sum_i\min V_{i,j}(X)
 &\le X(t_j-4mc)+m
 \le X(t_j-2c)
 \le\min U_j(X)-cX,\\
 \sum_i\max V_{i,j}(X)
 &\ge X(t_j+4mc)-m
 \ge X(t_j+2c)
 \ge\max U_j(X)+cX.
\end{align*}
This proves \eqref{eq222}.
\end{proof}

\begin{lemma}\label{lem27}
Let $2\le d\le h$, let $j\le k$, put $q=q_j$ and $a=e_j$, and let
$S_0,\ldots,S_{d-1}\subseteq\mathbb Z_q\setminus\{a\}$ be nonempty. Then there exist
$R_i\subseteq[0,Q-1]\cap\mathbb Z$ $(0\le i<d)$ such that
\begin{align}
 x\bmod q&\in S_i\quad(x\in R_i),\qquad
 x\not\equiv e_\nu\pmod{q_\nu}
 \quad(x\in R_i,\ 1\le\nu\le k),\label{eq223}\\
 (R_0+\cdots+R_{d-1})\bmod Q
 &=\{u\bmod Q:u\bmod q\in S_0+\cdots+S_{d-1}\},\label{eq224}\\
 |R_i|&\ll_d |S_i|(Q/q)^{1/d}.\label{eq225}
\end{align}
If $d=2$ and $S_0=S_1=\{f\}$, one may also choose another pair of sets satisfying
\eqref{eq223} and \eqref{eq224} such that
\begin{equation}\label{eq226}
 |R_0|=2^{k-1},\qquad |R_1|\le Q/q.
\end{equation}
\end{lemma}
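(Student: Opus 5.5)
The plan is to apply Lemma~\ref{lem23} with the $k$ primes $q_1,\ldots,q_k$ (pairwise coprime, product $Q$), taking $m=d$ summands. In coordinate $j$ we prescribe $S_{j,i}=S_i$, which by hypothesis avoids $a=e_j$. For each $\nu\ne j$ with $\nu\le k$ we must choose sets $S_{\nu,0},\ldots,S_{\nu,d-1}\subseteq\mathbb Z_{q_\nu}\setminus\{e_\nu\}$ whose sumset is all of $\mathbb Z_{q_\nu}$ and whose sizes are as small as possible. Lemma~\ref{lem24} does exactly this, with $c=e_\nu$: since $q_\nu>q_0\ge5$, it produces $d$ sets of size $\lceil q_\nu^{1/d}\rceil$ with full sumset. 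Defining $R_i$ by \eqref{eq210}, we get the following.
\begin{itemize}
\item The exclusion condition \eqref{eq223} follows from \eqref{eq213}, applied in every coordinate; in coordinate $j$ it uses $a\notin S_i$.
\item The residue condition $x\bmod q\in S_i$ holds by construction.
\item \eqref{eq224} is \eqref{eq214}.
\end{itemize}

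For the size bound \eqref{eq225}, \eqref{eq211} gives
\[
|R_i|=|S_i|\prod_{\nu\ne j}\lceil q_\nu^{1/d}\rceil\le|S_i|\prod_{\nu\ne j}q_\nu^{1/d}\,(1+q_\nu^{-1/d}).
\]
Since $d\le h$, we have $q_\nu^{-1/d}\le q_\nu^{-1/h}$, so the last product is at most $e^{2^{-10}}$ by \eqref{eq25}. This is the only place where the growth conditions \eqref{eq21} on the primes are used. Hence $|R_i|\le e^{2^{-10}}|S_i|(Q/q)^{1/d}$, which is uniform in $k$. The main point is precisely that the implied constant must not depend on $k$, and Lemma~\ref{lem21} handles this.

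For the special case $d=2$, $S_0=S_1=\{f\}$, we want a very unbalanced pair. For each $\nu\ne j$, take $S_{\nu,0}$ to be a two-element set $\{\alpha_\nu,\beta_\nu\}$ avoiding $e_\nu$, and take $S_{\nu,1}=\mathbb Z_{q_\nu}\setminus\{e_\nu\}$. Their sumset is $(\alpha_\nu+S_{\nu,1})\cup(\beta_\nu+S_{\nu,1})$, which misses at most the residues $\alpha_\nu+e_\nu$ and $\beta_\nu+e_\nu$. These are distinct because $\alpha_\nu\ne\beta_\nu$, so each is covered by the other translate, and the sumset is all of $\mathbb Z_{q_\nu}$. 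Lemma~\ref{lem23} then gives
\[
|R_0|=1\cdot\prod_{\nu\ne j}2=2^{k-1},\qquad |R_1|=\prod_{\nu\ne j}(q_\nu-1)\le Q/q,
\]
while \eqref{eq223} and \eqref{eq224} follow as before from \eqref{eq213} and \eqref{eq214}.

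The remaining details are routine: the coprimality of the $q_\nu$ is immediate since they are distinct primes, the case $k=1$ gives an empty product, and the hypotheses $q_\nu\ge5$ and $d\le h$ needed for Lemma~\ref{lem24} hold.
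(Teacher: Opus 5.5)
Your proposal is correct and follows the paper's proof essentially step for step: Lemma~\ref{lem24} is applied in each coordinate $\nu\ne j$ and the pieces are assembled via Lemma~\ref{lem23}; the bound $\lceil q_\nu^{1/d}\rceil\le q_\nu^{1/d}(1+q_\nu^{-1/h})$ is combined with the $k$-uniform product estimate \eqref{eq25}; and in the special case each coordinate $\nu\ne j$ uses the pair $\{\alpha_\nu,\beta_\nu\}$, $\mathbb Z_{q_\nu}\setminus\{e_\nu\}$. Your covering argument via the two translates $\mathbb Z_{q_\nu}\setminus\{\alpha_\nu+e_\nu\}$ and $\mathbb Z_{q_\nu}\setminus\{\beta_\nu+e_\nu\}$ is just a rephrasing of the paper's observation that at most one $r\in\{\alpha_\nu,\beta_\nu\}$ satisfies $s-r=e_\nu$.
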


\begin{proof}
For $0\le i<d$, put
\[
 S_{j,i}=S_i.
\]
For $1\le\nu\le k$ with $\nu\ne j$, let
\[
 m_\nu=\lceil q_\nu^{1/d}\rceil.
\]
By Lemma~\ref{lem24}, with $q=q_\nu$ and $c=e_\nu$, choose
$\widetilde D_{\nu,i}\subseteq\mathbb Z_{q_\nu}\setminus\{e_\nu\}$ such that
\[
 |\widetilde D_{\nu,i}|=m_\nu,
 \qquad
 \sum_{i=0}^{d-1}\widetilde D_{\nu,i}=\mathbb Z_{q_\nu}.
\]
Define
\[
 S_{\nu,i}=
 \begin{cases}
 S_i,&\nu=j,\\
 \widetilde D_{\nu,i},&\nu\ne j,
 \end{cases}
 \qquad
 1\le\nu\le k,
 \quad0\le i<d,
\]
and
\[
 R_i=
 \left\{x\in[0,Q-1]\cap\mathbb Z:
 x\bmod q_\nu\in S_{\nu,i}
 \ (1\le\nu\le k)\right\}.
\]
Then
\[
 e_j=a\notin S_i=S_{j,i},
 \qquad
 e_\nu\notin S_{\nu,i}\quad(\nu\ne j),
\]
and therefore, for every $x\in R_i$ and $1\le\nu\le k$,
\[
 x\not\equiv e_\nu\pmod{q_\nu}.
\]
This proves \eqref{eq223}. By Lemma~\ref{lem23},
\begin{align*}
 (R_0+\cdots+R_{d-1})\bmod Q
 &=\left\{u\bmod Q:
 u\bmod q_\nu\in\sum_{i=0}^{d-1}S_{\nu,i}
 \ (1\le\nu\le k)\right\}\\
 &=\left\{u\bmod Q:
 u\bmod q\in S_0+\cdots+S_{d-1}\right\},
\end{align*}
which is \eqref{eq224}.

Again by Lemma~\ref{lem23},
\begin{align*}
 |R_i|
 &=|S_i|\prod_{\substack{\nu\le k\\\nu\ne j}}m_\nu\\
 &\le |S_i|\prod_{\substack{\nu\le k\\\nu\ne j}}
 q_\nu^{1/d}(1+q_\nu^{-1/d})\\
 &=|S_i|(Q/q)^{1/d}
 \prod_{\substack{\nu\le k\\\nu\ne j}}(1+q_\nu^{-1/d})\\
 &\le |S_i|(Q/q)^{1/d}
 \prod_{\nu\ge1}(1+q_\nu^{-1/h})\\
 &\ll_d |S_i|(Q/q)^{1/d}.
\end{align*}
Here $d\le h$ gives
\[
 q_\nu^{-1/d}\le q_\nu^{-1/h},
\]
and \eqref{eq25} gives
$\prod_{\nu\ge1}(1+q_\nu^{-1/h})<\infty$.
Thus \eqref{eq225} follows.

Suppose now that $d=2$ and $S_0=S_1=\{f\}$. For every $\nu\ne j$, choose
\[
 A_\nu=\{\alpha_\nu,\beta_\nu\}
 \subseteq\mathbb Z_{q_\nu}\setminus\{e_\nu\},
 \qquad
 \alpha_\nu\ne\beta_\nu,
\]
and let
\[
 B_\nu=\mathbb Z_{q_\nu}\setminus\{e_\nu\}.
\]
For every $s\in\mathbb Z_{q_\nu}$,
\[
 \#\{r\in A_\nu:s-r=e_\nu\}\le1<|A_\nu|=2.
\]
Hence we may choose $r\in A_\nu$ with $s-r\ne e_\nu$. Therefore
\[
 s=r+(s-r)\in A_\nu+B_\nu,
\]
and so
\[
 A_\nu+B_\nu=\mathbb Z_{q_\nu}.
\]
For the $j$th coordinate, put
\[
 A_j=B_j=\{f\},
 \qquad
 f\ne a=e_j.
\]
Define
\[
 R_0=\left\{x\in[0,Q-1]\cap\mathbb Z:x\bmod q_\nu\in A_\nu
 \ (1\le\nu\le k)\right\},
\]
\[
 R_1=\left\{x\in[0,Q-1]\cap\mathbb Z:x\bmod q_\nu\in B_\nu
 \ (1\le\nu\le k)\right\}.
\]
Then
\[
 (R_0+R_1)\bmod Q
 =\{u\bmod Q:u\bmod q\in\{2f\}\},
\]
\[
 |R_0|
 =|A_j|\prod_{\substack{\nu\le k\\\nu\ne j}}|A_\nu|
 =2^{k-1},
\]
and
\[
 |R_1|
 =|B_j|\prod_{\substack{\nu\le k\\\nu\ne j}}|B_\nu|
 =\prod_{\substack{\nu\le k\\\nu\ne j}}(q_\nu-1)
 \le\prod_{\substack{\nu\le k\\\nu\ne j}}q_\nu
 =Q/q.
\]
Thus \eqref{eq223}, \eqref{eq224}, and
\eqref{eq226} hold.
\end{proof}

\begin{lemma}\label{lem28}
Let $2\le m\le h$ be an integer.

\textup{(i)} Suppose that
\[
 \varnothing\ne R_i\subseteq[0,Q-1]\cap\mathbb Z\quad(0\le i<m),
\]
\[
 R_i\cap(e_\nu+q_\nu\mathbb Z)=\varnothing
 \quad(0\le i<m,\ 1\le\nu\le k),\qquad
 \Omega=(R_0+\cdots+R_{m-1})\bmod Q.
\]
Let $c,C,X>0$, and suppose that the integer intervals $U,V_0,\ldots,V_{m-1}$ satisfy
\[
 |V_i|\ge cX\quad(0\le i<m),\qquad |U|\le CX,\qquad
 U+[-cX,cX]\subseteq\sum_{i=0}^{m-1}V_i.
\]
Fix $0<\zeta\le c/8$, and take
\[
 0<\eta\le\min\left\{
 \frac{\zeta}{2^{m+3}(m+7)},
 \frac{c}{8\bigl(m(m+2)+4m+10\bigr)}\right\},
 \qquad Q\le\eta X.
\]
Then there exist sets $D_i\subseteq V_i$ $(0\le i<m)$ such that
\begin{align}
 U\cap\{n:n\bmod Q\in\Omega\}
 &\subseteq\sum_{i=0}^{m-1}D_i,\label{eq227}\\
 |D_i|&\ll(X/Q)^{1/m}|R_i|,\label{eq228}
\end{align}
and every $x\in D_i$ satisfies $x\bmod Q\in R_i$ and \eqref{eq24}.

\textup{(ii)} Let $u_i,v_i\in\mathbb Z$, and suppose that the finite sets
$F_i\subseteq\mathbb Z$ $(0\le i<m)$ satisfy
\[
 F_i\subseteq[u_i+(m+2)Q,v_i-(m+2)Q],\qquad
 F_i\cap(e_\nu+q_\nu\mathbb Z)=\varnothing
 \quad(1\le\nu\le k).
\]
Then there exist sets $F_i^*\subseteq[u_i,v_i]$ such that
\begin{equation}\label{eq229}
 \sum_{i=0}^{m-1}F_i\subseteq\sum_{i=0}^{m-1}F_i^*,\qquad
 |F_i^*|\le(2m+3)|F_i|.
\end{equation}
For every $x\in F_i^*$, there exists $z\in F_i$ such that $x\equiv z\pmod Q$, and $x$ satisfies \eqref{eq24}.
Moreover, for every $y\in\sum_{i=0}^{m-1}F_i^*$, there exists
$y_0\in\sum_{i=0}^{m-1}F_i$ such that
\[
 |y-y_0|\le m(m+1)Q.
\]

\textup{(iii)} Suppose that $m=2$, $n,\xi,\upsilon\in\mathbb Z$, and
\[
 |K_0\cap(n-K_1)|\ge3Q+1,\qquad
 \xi+\upsilon\equiv n\pmod Q,
\]
where $K_0,K_1$ are integer intervals and
\[
 \xi\not\equiv e_\nu\pmod{q_\nu},\qquad
 \upsilon\not\equiv e_\nu\pmod{q_\nu}
 \quad(1\le\nu\le k).
\]
Then there exists $(x,y)\in K_0\times K_1$ such that
\[
 x+y=n,\qquad x\equiv\xi\pmod Q,\qquad
 y\equiv\upsilon\pmod Q,
\]
and both $x$ and $y$ satisfy \eqref{eq24}.
\end{lemma}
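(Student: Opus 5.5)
The plan is to prove (ii) and (iii) first, since they are the two devices for enforcing the extra congruence condition in \eqref{eq24} at the prime $q_{k+1}$. Part (i) then follows by combining Lemma~\ref{lem25} with (ii).

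\emph{Part (ii).} If the second line of \eqref{eq24} is vacuous (that is, $k=N$ or $q_{k+1}>2L$), take $F_i^*=F_i$. Every $x\in F_i$ avoids $e_\nu\bmod q_\nu$ for $\nu\le k$ by hypothesis, so \eqref{eq24} holds, and all remaining assertions are trivial (with $y_0=y$).

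Otherwise, set $\ell=q_{k+1}$ and $b=e_{k+1}$. Then $\ell$ is an odd prime, since $q_{k+1}>q_0$ is large. It is coprime to $Q=Q_k$ because the $q_i$ are distinct primes. We then apply Lemma~\ref{lem22} and put $F_i^*=F_i^\dagger$.
\begin{itemize}
\item The containment and the bound in \eqref{eq229} are \eqref{eq27} and \eqref{eq28}, and the proximity statement is \eqref{eq29}.
\item Each $x\in F_i^\dagger$ has the form $z+tQ$ with $z\in F_i$ and $|t|\le m+1$. Hence $x\in[u_i+Q,v_i-Q]\subseteq[u_i,v_i]$, and $x\equiv z\pmod Q$.
\item Since $q_\nu\mid Q$ for $\nu\le k$, the congruence $x\equiv z\pmod Q$ gives $x\not\equiv e_\nu\pmod{q_\nu}$ for $\nu\le k$.
\item By construction $x\not\equiv e_{k+1}\pmod{q_{k+1}}$, so \eqref{eq24} holds.
\end{itemize}

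\emph{Part (iii).} The set $J=K_0\cap(n-K_1)$ is an integer interval with at least $3Q+1$ elements. It therefore contains three consecutive terms $x_0,x_0+Q,x_0+2Q$ of the progression $\xi+Q\mathbb Z$.

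For any $x$ in this progression, put $y=n-x\in K_1$. Then $y\equiv n-\xi\equiv\upsilon\pmod Q$, so $x$ and $y$ inherit the non-congruences modulo $q_\nu$ for $\nu\le k$ from $\xi$ and $\upsilon$.

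If the $q_{k+1}$-condition is active, we argue as in Lemma~\ref{lem22}. Since $q_{k+1}>2$ and $(Q,q_{k+1})=1$, at most one $t\in\{0,1,2\}$ has $x_0+tQ\equiv e_{k+1}\pmod{q_{k+1}}$. Likewise at most one $t$ has $n-x_0-tQ\equiv e_{k+1}\pmod{q_{k+1}}$. Hence some $t\in\{0,1,2\}$ satisfies neither, and $x=x_0+tQ$, $y=n-x$ is the required pair.

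\emph{Part (i).} Apply Lemma~\ref{lem25} (indices shifted to $0,\dots,m-1$) with $K=m+2$. The stated bound on $\eta$ is exactly $\eta_0$ with $K=m+2$, so $M\ge2$ and we obtain shifts $b_{i,\nu}$, $1\le\nu\le N=O(1)$. Put
\[
 G_i=\bigcup_{\nu}(F_i+b_{i,\nu}Q)\subseteq[u_i+(m+2)Q,\,v_i-(m+2)Q].
\]
By \eqref{eq220},
\[
 U\cap\{n:n\bmod Q\in\Omega\}\subseteq\bigcup_\nu\sum_i(F_i+b_{i,\nu}Q)\subseteq\sum_iG_i.
\]
By \eqref{eq221} and the choice of $M$, $|G_i|\ll M|R_i|\ll(X/Q)^{1/m}|R_i|$. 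Every element of $G_i$ is congruent mod $Q$ to an element of $R_i$, so $G_i\cap(e_\nu+q_\nu\mathbb Z)=\varnothing$ for $\nu\le k$.

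Now apply (ii) to the sets $G_i$ and set $D_i=G_i^*\subseteq V_i$.
\begin{itemize}
\item \eqref{eq227} follows from \eqref{eq229}.
\item \eqref{eq228} follows from $|D_i|\le(2m+3)|G_i|$.
\item Each $x\in D_i$ is congruent mod $Q$ to an element of $G_i$, hence has residue in $R_i$, and satisfies \eqref{eq24} by (ii).
\end{itemize}

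The main point needing care is the applicability of the $q_{k+1}$-adjustment in (ii) and (iii): $q_{k+1}$ must be an odd prime coprime to $Q$, and the shifted sets must remain inside $[u_i,v_i]$. The margin $(m+2)Q$, with $K=m+2$ in Lemma~\ref{lem25}, is chosen exactly so that shifts by $|t|\le m+1$ stay in range.
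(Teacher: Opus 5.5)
Your proposal is correct and follows the paper's proof essentially step by step. For (ii) you invoke Lemma~\ref{lem22} with $\ell=q_{k+1}$, $b=e_{k+1}$ and use the $(m+2)Q$ margin. For (i) you apply Lemma~\ref{lem25} with $K=m+2$ to obtain $O(1)$ shifted copies and then feed them into (ii). For (iii) you use the three terms $x_0,x_0+Q,x_0+2Q$ of the progression $\xi+Q\mathbb Z$ inside $K_0\cap(n-K_1)$.
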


\begin{proof}
\textup{(ii)} If $k=N$, or if $k<N$ and $q_{k+1}>2L$, put
\[
 F_i^*=F_i\subseteq[u_i,v_i],\qquad
 \sum_iF_i^*=\sum_iF_i,\qquad
 |F_i^*|=|F_i|,
\]
\[
 F_i^*\cap(e_\nu+q_\nu\mathbb Z)=\varnothing
 \quad(1\le\nu\le k).
\]
For every $y\in\sum_iF_i^*$, take
\[
 y_0=y\in\sum_iF_i,
 \qquad |y-y_0|=0.
\]
Now suppose that $k<N$ and $q_{k+1}\le2L$. Put
$\ell=q_{k+1}$ and $b=e_{k+1}$. Since $q_1,\ldots,q_{k+1}$ are distinct odd primes,
\[
 \ell>2,\qquad (Q,\ell)=1.
\]
By Lemma~\ref{lem22}, take
\[
 F_i^*=\{x+tQ:x\in F_i,\ t\in\mathbb Z,\ |t|\le m+1,\
 x+tQ\not\equiv b\pmod\ell\}.
\]
Then
\[
 \sum_iF_i\subseteq\sum_iF_i^*,\qquad
 |F_i^*|\le(2m+3)|F_i|.
\]
For $x+tQ\in F_i^*$,
\[
 x\in F_i\subseteq[u_i+(m+2)Q,v_i-(m+2)Q],
 \qquad |t|\le m+1,
\]
and hence
\[
 u_i+Q\le x+tQ\le v_i-Q,
 \qquad F_i^*\subseteq[u_i,v_i].
\]
Since $q_\nu\mid Q$ for $\nu\le k$,
\[
 \begin{aligned}
 x+tQ&\equiv x\pmod Q,\\
 x+tQ&\equiv x\not\equiv e_\nu\pmod{q_\nu}
 \quad(1\le\nu\le k),\\
 x+tQ&\not\equiv e_{k+1}\pmod{q_{k+1}}.
 \end{aligned}
\]
Thus the residue class modulo $Q$ is unchanged and \eqref{eq24} is satisfied.
Finally, if
\[
 y=\sum_{i=0}^{m-1}(x_i+t_iQ),\qquad
 x_i\in F_i,\quad |t_i|\le m+1,
\]
then, with $y_0=\sum_{i=0}^{m-1}x_i\in\sum_iF_i$,
\[
 |y-y_0|\le Q\sum_{i=0}^{m-1}|t_i|\le m(m+1)Q.
\]

\textup{(i)} Take $K=m+2$. Since
\[
 Q\le\eta X,
 \qquad
 \eta\le
 \min\left\{
 \frac{\zeta}{2^{m+3}(m+7)},
 \frac{c}{8\bigl(m(m+2)+4m+10\bigr)}
 \right\},
\]
Lemma~\ref{lem25} can be applied with $K=m+2$.
Put
\[
 M=\left\lfloor(\zeta X/Q)^{1/m}\right\rfloor\ge2,\qquad
 G_i=R_i+QM^i[0,M-1]\quad(0\le i<m).
\]
There are $N_0=O_{c,C,m,\zeta}(1)$ families of integers $b_{i,\mu}$ such that
\[
 H_i:=\bigcup_{\mu=1}^{N_0}(G_i+b_{i,\mu}Q)
 \subseteq[\min V_i+(m+2)Q,\max V_i-(m+2)Q],
\]
\[
 U\cap\{n:n\bmod Q\in\Omega\}
 \subseteq\sum_{i=0}^{m-1}H_i,
 \qquad |H_i|\le N_0M|R_i|.
\]
For every $x\in H_i$, its residue class modulo $Q$ lies in $R_i$. Since
$q_\nu\mid Q$ and
$R_i\cap(e_\nu+q_\nu\mathbb Z)=\varnothing$ for $1\le\nu\le k$,
we have $x\not\equiv e_\nu\pmod{q_\nu}$.
We may therefore apply \textup{(ii)} to
$H_0,\ldots,H_{m-1}$ and take $D_i=H_i^*$.
Then
\[
 \begin{aligned}
 D_i&\subseteq V_i,\\
 U\cap\{n:n\bmod Q\in\Omega\}
 &\subseteq\sum_iH_i\subseteq\sum_iD_i.
 \end{aligned}
\]
Moreover, every $x\in D_i$ satisfies $x\bmod Q\in R_i$ and \eqref{eq24}.
Since $Q\le\eta X$ and $\eta\le\zeta/2^m$,
\[
 (\zeta X/Q)^{1/m}\ge2.
\]
From $M=\lfloor(\zeta X/Q)^{1/m}\rfloor$ we get
\[
 \frac12(\zeta X/Q)^{1/m}
 \le M\le(\zeta X/Q)^{1/m}.
\]
Hence
\[
 |D_i|\le(2m+3)|H_i|\le(2m+3)N_0M|R_i|
 \ll(X/Q)^{1/m}|R_i|,
\]
which proves \eqref{eq227} and \eqref{eq228}.

\textup{(iii)} Let
\[
 I=K_0\cap(n-K_1),\qquad a=\min I,\qquad
 t\in[0,Q-1]\cap\mathbb Z,\quad t\equiv\xi-a\pmod Q.
\]
Since $I$ is an integer interval and $|I|\ge3Q+1$,
\[
 a+t+2Q\le a+3Q-1< a+3Q\le\max I.
\]
For $s=0,1,2$, put
\[
 x_s=a+t+sQ\in I,\qquad y_s=n-x_s\in K_1.
\]
Then
\[
 x_s\in K_0,\qquad x_s+y_s=n,\qquad
 x_s\equiv\xi\pmod Q,\qquad
 y_s\equiv n-\xi\equiv\upsilon\pmod Q,
\]
and
\[
 x_s\equiv\xi\not\equiv e_\nu\pmod{q_\nu},\qquad
 y_s\equiv\upsilon\not\equiv e_\nu\pmod{q_\nu}
 \quad(1\le\nu\le k).
\]
If $k=N$, or if $k<N$ and $q_{k+1}>2L$, take $s=0$.
Otherwise, put $\ell=q_{k+1}\le2L$. For $0\le r<s\le2$, since
$1\le s-r\le2<\ell$ and $(Q,\ell)=1$,
\[
 \begin{cases}
 x_s-x_r=(s-r)Q\not\equiv0\pmod\ell,\\
 y_s-y_r=-(s-r)Q\not\equiv0\pmod\ell.
 \end{cases}
\]
Thus
\[
 \begin{aligned}
 \#\{s\in\{0,1,2\}:x_s\equiv e_{k+1}\pmod\ell\}&\le1,\\
 \#\{s\in\{0,1,2\}:y_s\equiv e_{k+1}\pmod\ell\}&\le1,
 \end{aligned}
\]
and therefore
\[
 \#\{s\in\{0,1,2\}:x_s\equiv e_{k+1}\pmod\ell
 \ \text{or}\ y_s\equiv e_{k+1}\pmod\ell\}\le2<3.
\]
Hence there exists $s\in\{0,1,2\}$ such that
\[
 x_s\not\equiv e_{k+1}\pmod\ell,\qquad
 y_s\not\equiv e_{k+1}\pmod\ell.
\]
Taking $(x,y)=(x_s,y_s)$ proves the assertion.
\end{proof}

\begin{lemma}\label{lem29}
Let $Q,G,d\in\mathbb Z_{\ge1}$ and $b_*\in\mathbb Z$, and let $K\subseteq J$ be nonempty integer intervals such that
\[
 |K|>6G,\qquad
 I_m=[b_*+mG,b_*+(m+1)G],\qquad
 \mathcal M=\{m\in\mathbb Z:I_m\cap J\ne\varnothing\}.
\]
Let $\mathcal T\subseteq\mathbb Z$, and suppose that for every $m\in\mathcal M$,
\begin{equation}\label{eq230}
 I_m\cap\mathcal T\subseteq\mathcal F_m
 \subseteq[b_*+mG-dQ,b_*+(m+1)G+dQ].
\end{equation}
Then
\begin{equation}\label{eq231}
 |\mathcal M|\le2+|J|/G,
\end{equation}
and one may choose $m_0\in\mathcal M$ such that
\[
 A=b_*+m_0G,\qquad J_0=I_{m_0}=[A,A+G]\subseteq K,
 \qquad\operatorname{dist}(J_0,\partial K)\ge2G.
\]
Put
\[
 \mathcal F=\bigcup_{m\in\mathcal M\setminus\{m_0\}}\mathcal F_m.
\]
If $\Delta\in\mathbb Z_{\ge0}$ and
\[
 \mathcal F\subseteq\mathcal H
 \subseteq\mathcal F+[-\Delta,\Delta],
\]
then, with
\begin{equation}\label{eq232}
 J_1=[A+(d+1)Q+\Delta,A+G-(d+1)Q-\Delta],
\end{equation}
we have
\begin{equation}\label{eq233}
 J_1\subseteq J_0\subseteq K,\qquad
 (J\setminus J_0)\cap\mathcal T\subseteq\mathcal H,
 \qquad J_1\cap\mathcal H=\varnothing.
\end{equation}
If $G>2(d+1)Q+2\Delta$, then
\begin{equation}\label{eq234}
 |J_0|=G+1,\qquad
 |J_1|=G-2(d+1)Q-2\Delta+1.
\end{equation}
\end{lemma}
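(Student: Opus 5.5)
The proof is elementary interval bookkeeping, and I will carry it out in four steps.

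\textbf{Step 1: the bound on $|\mathcal M|$.} The intervals $I_m$ have length $G$, they tile $\mathbb Z$, and consecutive ones share an endpoint. Write $J=[s,t]$. If $I_m\cap J\ne\varnothing$, then
\[
 b_*+mG\le t,\qquad b_*+(m+1)G\ge s .
\]
So $m$ lies in a real interval of length $(t-s)/G+1$. Such an interval contains at most $(t-s)/G+2\le 2+|J|/G$ integers, which gives \eqref{eq231}.

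\textbf{Step 2: choosing $m_0$.} Write $K=[\alpha,\beta]$, so that $\beta-\alpha=|K|-1\ge 6G$. Let $m_0$ be the least integer with $b_*+m_0G\ge\alpha+2G$, and put $A=b_*+m_0G$. Then
\[
 \alpha+2G\le A<\alpha+3G,
\]
so $A+G<\alpha+4G\le\beta-2G$. Hence $J_0=[A,A+G]\subseteq K\subseteq J$. This shows $m_0\in\mathcal M$ and $\operatorname{dist}(J_0,\partial K)\ge 2G$.

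\textbf{Step 3: the three inclusions in \eqref{eq233}.}
\begin{itemize}
\item $J_1\subseteq J_0$ is immediate from \eqref{eq232}, since $(d+1)Q+\Delta\ge0$.
\item Take $x\in(J\setminus J_0)\cap\mathcal T$. Since the $I_m$ cover $\mathbb Z$, $x$ lies in some $I_m$, and this $m$ is in $\mathcal M$ because $x\in J$. Also $m\ne m_0$, because $x\notin J_0=I_{m_0}$. By \eqref{eq230}, $x\in I_m\cap\mathcal T\subseteq\mathcal F_m\subseteq\mathcal F\subseteq\mathcal H$.
\item For $J_1\cap\mathcal H=\varnothing$, note that every element of $\mathcal H$ is within $\Delta$ of some element of $\mathcal F_m$ with $m\ne m_0$. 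Such an element lies in $[b_*+mG-dQ,\,b_*+(m+1)G+dQ]$. If $m\le m_0-1$, it is at most $A+dQ$. If $m\ge m_0+1$, it is at least $A+G-dQ$. So every $h\in\mathcal H$ satisfies $h\le A+dQ+\Delta$ or $h\ge A+G-dQ-\Delta$. Every $x\in J_1$ satisfies $A+dQ+\Delta<x<A+G-dQ-\Delta$, by the extra $Q\ge1$ margin in \eqref{eq232}. Hence $J_1\cap\mathcal H=\varnothing$.
\end{itemize}

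\textbf{Step 4: the counts in \eqref{eq234}.} These are integer-interval counts: $|J_0|=G+1$. When $G>2(d+1)Q+2\Delta$, $J_1$ is nonempty, and its number of integers is $G-2(d+1)Q-2\Delta+1$.

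\textbf{Main point of care.} The only real pitfall is the endpoint overlap between neighbouring $I_m$. I handle it by using the strict margin $(d+1)Q$ rather than $dQ$, so that the shared endpoints $A$ and $A+G$ (and the $\pm dQ\pm\Delta$ neighbourhoods around them) fall outside $J_1$.
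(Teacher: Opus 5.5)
Your proof is correct and follows the paper's argument essentially step for step. It uses the same integer-count bound for $|\mathcal M|$, the same choice of $m_0$ as the least index with $b_*+m_0G\ge\min K+2G$, and the same separation argument: for $m\ne m_0$, $\mathcal F_m$ lies in $(-\infty,A+dQ]$ or in $[A+G-dQ,\infty)$, and the extra margin $Q\ge1$ in $J_1$ absorbs the $\Delta$-fattening.
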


\begin{proof}
Write $J=[u,v]$ and $K=[u',v']$, where all endpoints are integers. If $I_m\cap J\ne\varnothing$, then
\[
 u-G\le b_*+mG\le v,
\]
and hence
\[
 |\mathcal M|\le\frac{v-u+G}{G}+1
 \le2+\frac{|J|}{G}.
\]
Since $G\ge1$, let
\[
 m_0=\left\lceil\frac{u'+2G-b_*}{G}\right\rceil,
 \qquad A=b_*+m_0G.
\]
By the definition of the ceiling function,
\[
 u'+2G\le A<u'+3G.
\]
Since $|K|=v'-u'+1>6G$,
\[
 A-u'\ge2G,\qquad
 v'-(A+G)\ge u'+6G-(u'+4G-1)>2G.
\]
Therefore $J_0=[A,A+G]\subseteq K\subseteq J$, with
$m_0\in\mathcal M$ and $\operatorname{dist}(J_0,\partial K)\ge2G$.

Since $\min I_{m+1}=\max I_m$, we have $\bigcup_{m\in\mathbb Z}I_m=\mathbb Z$.
Let $n\in(J\setminus J_0)\cap\mathcal T$, and choose $m$ such that $n\in I_m$. Then
\[
 m\in\mathcal M\setminus\{m_0\},\qquad
 n\in I_m\cap\mathcal T\subseteq\mathcal F_m\subseteq\mathcal F.
\]
Thus $(J\setminus J_0)\cap\mathcal T\subseteq\mathcal F\subseteq\mathcal H$.
If $m\le m_0-1$, then by \eqref{eq230},
\[
 \mathcal F_m\subseteq(-\infty,b_*+(m+1)G+dQ]
 \subseteq(-\infty,b_*+m_0G+dQ]=(-\infty,A+dQ].
\]
If $m\ge m_0+1$, similarly,
\[
 \mathcal F_m\subseteq[b_*+mG-dQ,\infty)
 \subseteq[b_*+(m_0+1)G-dQ,\infty)=[A+G-dQ,\infty).
\]
Hence, since $(d+1)Q>dQ$,
\[
 [A+(d+1)Q,A+G-(d+1)Q]\cap\mathcal F=\varnothing.
\]
If $y\in J_1\cap\mathcal H$, then $\mathcal H\subseteq\mathcal F+[-\Delta,\Delta]$ gives an element
$y_0\in\mathcal F$ with $|y-y_0|\le\Delta$. Thus
\[
 y_0\in[y-\Delta,y+\Delta]\cap\mathcal F
 \subseteq[A+(d+1)Q,A+G-(d+1)Q]\cap\mathcal F
 =\varnothing,
\]
a contradiction. Hence \eqref{eq233} holds.
If $G>2(d+1)Q+2\Delta$, then $J_1$ is nonempty and, since its endpoints are integers,
\[
 |J_1|=A+G-(d+1)Q-\Delta
       -\bigl(A+(d+1)Q+\Delta\bigr)+1
       =G-2(d+1)Q-2\Delta+1.
\]
Together with $|J_0|=G+1$, this proves \eqref{eq234}.
\end{proof}

\section{Main lemmas and proof of Theorem~\ref{thm11}}\label{sec3}

Fix an integer $b>4h$, and put
\[
 S_b=1+b+\cdots+b^{p-1},\qquad M_b=10hS_b.
\]
Define
\begin{equation}\label{eq31}
 H=hM_b,\quad O_i=pM_b+b^i\ (0\le i<p),\quad
 C=pM_b-S_b,\quad Z=hpM_b.
\end{equation}

\begin{lemma}\label{lem31}
The numbers defined in \eqref{eq31} satisfy
\[
 0<C<O_i<H\quad(0\le i<p),\qquad
 O_i\ne O_j\quad(i\ne j).
\]
If
\[
 d_1,\ldots,d_h\in\{0,H,C,O_0,\ldots,O_{p-1}\},
 \qquad
 \sum_{\nu=1}^h d_\nu=Z,
\]
then the corresponding multiset must be one of the following two multisets:
\begin{equation}\label{eq32}
 \{0,\underbrace{H,\ldots,H}_{p}\},
 \qquad
 \{C,O_0,\ldots,O_{p-1}\}.
\end{equation}
Conversely, each of the two multisets in \eqref{eq32} has sum $Z$.
\end{lemma}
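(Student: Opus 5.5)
The plan is to reduce everything to arithmetic in base $b$ after subtracting the common offset $M_b$. The inequalities are immediate. We have $C=pM_b-S_b>0$ since $M_b=10hS_b>S_b$, and $C<O_i$ since $b^i>0>-S_b$. Also $O_i=pM_b+b^i<hM_b=H$ because $b^i\le b^{p-1}<S_b<M_b$. Distinctness of the $O_i$ follows from distinctness of the powers $b^i$. For the converse, $0+pH=phM_b=Z$. Likewise
\[
C+\sum_{i<p}O_i=pM_b-S_b+p\cdot pM_b+S_b=(p+p^2)M_b=hpM_b=Z.
\]

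For the main assertion, let $n_0,n_H,n_C$ denote the multiplicities of $0,H,C$, and let $n_i$ be the multiplicity of $O_i$. Then $n_0+n_H+n_C+\sum n_i=h$. Write every nonzero value as a multiple of $M_b$ plus a small remainder: $H=hM_b+0$, $O_i=pM_b+b^i$, $C=pM_b-S_b$. The sum then becomes
\[
M_b\Bigl(hn_H+p\bigl(n_C+\textstyle\sum n_i\bigr)\Bigr)+\Bigl(\textstyle\sum_i n_ib^i-n_CS_b\Bigr)=hpM_b.
\]
The remainder term has absolute value at most $h\,b^{p-1}\le hS_b<M_b$. It is also divisible by $M_b$, since the other two terms are. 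Hence the remainder term is zero, and the coefficient equation $hn_H+p(n_C+\sum n_i)=hp$ holds.

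The step needing care is the remainder equation $\sum_i n_ib^i=n_CS_b=\sum_i n_Cb^i$. All $n_i$ and $n_C$ are at most $h<b$, so both sides are base-$b$ expansions with digits less than $b$. Uniqueness of base-$b$ representation then gives $n_i=n_C$ for every $i$. Substituting into the coefficient equation gives $hn_H+p(1+p)n_C=hp$, that is, $h(n_H+pn_C)=hp$, so $n_H+pn_C=p$.

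The counting constraint reads $n_0+n_H+n_C+pn_C=h=p+1$, which simplifies to $n_0+n_C=1$. If $n_C=0$, then $n_0=1$ and $n_H=p$, which is the first multiset. If $n_C=1$, then $n_H=0$, $n_0=0$ and $n_i=1$ for all $i$, which is the second multiset. The only real obstacle is keeping the size bound $hS_b<M_b$ and the digit bound $h<b$ explicit, and both are guaranteed by $M_b=10hS_b$ and $b>4h$.
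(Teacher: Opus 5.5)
Your proof is correct and is essentially the paper's argument: split off multiples of $M_b$, force the small remainder to vanish, then compare base-$b$ digits. The one difference is that you apply digit uniqueness to get $n_i=n_C$ before solving $hn_H+p(n_C+\sum_i n_i)=hp$, which streamlines the endgame by removing the paper's reduction modulo $p$ and its separate treatment of $p=1$.

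Two intermediate inequalities are misstated, though the conclusions still hold. First, $b^{p-1}<S_b$ fails when $p=1$; use $b^i\le S_b<M_b$ instead. Second, the a priori bound on $\sum_i n_ib^i-n_CS_b$ is $hS_b$, not $hb^{p-1}$ (for instance, $n_C=h$ gives absolute value $hS_b$). This is still enough, since $hS_b=M_b/10<M_b$.
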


\begin{proof}
Since $p=h-1$, $M_b=10hS_b$, and $b^i\le S_b$,
\begin{align*}
 C&=pM_b-S_b>0, & O_i-C&=S_b+b^i>0,\\
 H-O_i&=M_b-b^i>0, & O_i-O_j&=b^i-b^j\ne0\quad(i\ne j).
\end{align*}
Let
\[
 x=\#\{\nu:d_\nu=H\},
 \quad
 z=\#\{\nu:d_\nu=C\},
 \quad
 n_i=\#\{\nu:d_\nu=O_i\},
 \quad
 y=z+\sum_{i=0}^{p-1}n_i.
\]
Then
\[
 x+y\le h.
\]
Using $\sum_\nu d_\nu=Z$ and \eqref{eq31}, we obtain
\begin{align*}
 0
 &=xH+zC+\sum_{i=0}^{p-1}n_iO_i-Z\\
 &=hxM_b+z(pM_b-S_b)
   +\sum_{i=0}^{p-1}n_i(pM_b+b^i)-hpM_b\\
 &=(hx+py-hp)M_b+\sum_{i=0}^{p-1}(n_i-z)b^i.
\end{align*}
Moreover,
\[
 \left|\sum_{i=0}^{p-1}(n_i-z)b^i\right|
 \le\sum_{i=0}^{p-1}|n_i-z|b^i
 \le hS_b
 =\frac1{10}M_b
 <\frac12M_b.
\]
Thus
\[
 (hx+py-hp)M_b\in M_b\mathbb Z,
 \qquad
 \left|(hx+py-hp)M_b\right|<\frac12M_b,
\]
and hence
\[
 hx+py=hp.
\]

First suppose that $p\ge2$. Since $h=p+1$,
\[
 hx+py=(p+1)x+py=hp.
\]
Reducing modulo $p$ gives
\[
 x\equiv0\pmod p.
\]
Since
\[
 0\le x\le h=p+1,
 \qquad
 x\in\{0,p\}.
\]
If $x=p$, then $y=0$; if $x=0$, then $y=h$.
If $p=1$, the equality $hx+py=hp$ similarly gives
\[
 2x+y=2,
 \qquad x+y\le2,
 \qquad x,y\ge0.
\]
Hence
\[
 (x,y)\in\{(1,0),(0,2)\}.
\]
When $(x,y)=(p,0)$,
\[
 \#\{\nu:d_\nu=0\}=h-p=1,
\]
and therefore
\[
 \{d_1,\ldots,d_h\}=\{0,H,\ldots,H\}.
\]
On the other hand, when $(x,y)=(0,h)$, the original equality becomes
\[
 \sum_{i=0}^{p-1}(n_i-z)b^i=0.
\]
Also,
\[
 |n_i-z|\le h<b/2.
\]
For $0\le r<p$, suppose that
\[
 n_0=\cdots=n_{r-1}=z
\]
(with no condition when $r=0$). Then
\[
 \sum_{i=0}^{p-1}(n_i-z)b^i=0
\]
gives
\[
 \sum_{i=r}^{p-1}(n_i-z)b^{i-r}=0,
\]
and therefore
\[
 n_r-z\equiv0\pmod b,
 \qquad |n_r-z|\le h<b,
 \qquad n_r=z.
\]
By induction,
\[
 n_i=z\qquad(0\le i<p).
\]
Since
\[
 z+\sum_{i=0}^{p-1}n_i=(p+1)z=h=p+1,
\]
we have $z=1$. Hence
\[
 n_0=\cdots=n_{p-1}=1.
\]
Finally,
\[
 pH=phM_b=Z,
\qquad
 C+\sum_{i=0}^{p-1}O_i
 =(pM_b-S_b)+p^2M_b+S_b
 =p(p+1)M_b=Z.
\]
\end{proof}

Put
\[
 \omega=\frac14,\qquad \alpha=\frac{\omega}{Z},
 \qquad
 \mathcal D:=\{H,C,O_0,\ldots,O_{p-1}\}.
\]
Fix
\begin{equation}\label{eq33}
 0<\rho<\min\left\{\frac{\alpha}{100h},\frac{\omega}{100h^2p}\right\},
 \qquad \tau=2\rho,\qquad \eps=\rho/100.
\end{equation}
For $d\in\mathcal D$, put
\begin{equation}\label{eq34}
 B_d(L):=[\alpha dL-\rho L,\alpha dL+\rho L].
\end{equation}

\begin{lemma}\label{lem32}
For every $L>0$,
\begin{equation}\label{eq35}
 [0,\tau L]\cap B_d(L)=\varnothing,
 \qquad
 B_d(L)\cap B_{d'}(L)=\varnothing
 \quad(d,d'\in\mathcal D,\ d\ne d').
\end{equation}
Suppose that
\[
 d_1,\ldots,d_h\in\{0\}\cup\mathcal D,
 \qquad
 x_i\in
 \begin{cases}
 [0,\tau L],&d_i=0,\\
 B_{d_i}(L),&d_i\in\mathcal D,
 \end{cases}
 \qquad
 n=\sum_{i=1}^h x_i,
\]
and
\[
 n\in[(\omega-\eps)L,(\omega+\eps)L].
\]
Then
\begin{equation}\label{eq36}
 \begin{aligned}
 \sum_{i=1}^h d_i&=Z,\\
 \{d_1,\ldots,d_h\}
 &\in\left\{
 \{0,\underbrace{H,\ldots,H}_{p}\},
 \{C,O_0,\ldots,O_{p-1}\}
 \right\}.
 \end{aligned}
\end{equation}
\end{lemma}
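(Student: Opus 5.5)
The plan is to read off the sum $\sum_i d_i$ from $n$ up to a small error, and then to invoke Lemma~\ref{lem31}. Every parameter here is a multiple of $L$, so I will divide by $L$ throughout.

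\textbf{Step 1: disjointness \eqref{eq35}.} Every $d\in\mathcal D$ satisfies $d\ge C$. Moreover, the distinct elements of $\mathcal D$ are distinct positive integers, so they differ by at least $1$.
\begin{itemize}
\item For the first claim, $\alpha d-\rho\ge\alpha-\rho>2\rho=\tau$, since \eqref{eq33} gives $\rho<\alpha/(100h)$. Hence $B_d(L)$ misses $[0,\tau L]$.
\item For the second claim, take $d\ne d'$. Their centers $\alpha dL$ and $\alpha d'L$ are at distance at least $\alpha L>2\rho L$, which is larger than the sum of the two half-widths. So the intervals are disjoint.
\end{itemize}

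\textbf{Step 2: locating $n$.} Write each $x_i=\alpha d_iL+\delta_i$.
\begin{itemize}
\item If $d_i\in\mathcal D$, then $|\delta_i|\le\rho L$.
\item If $d_i=0$, then $\delta_i=x_i\in[0,\tau L]$, so $|\delta_i|\le2\rho L$.
\end{itemize}
Summing over $i$,
\[
 \Bigl|n-\alpha L\sum_{i=1}^h d_i\Bigr|\le2h\rho L.
\]
Combining this with $|n-\omega L|\le\eps L$ and $\omega=\alpha Z$ gives
\[
 \alpha L\Bigl|\sum_{i=1}^h d_i-Z\Bigr|\le(2h\rho+\eps)L<3h\rho L.
\]
Since $\rho<\alpha/(100h)$, this yields $\bigl|\sum_i d_i-Z\bigr|<3/100<1$. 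Both $\sum_i d_i$ and $Z$ are integers, so $\sum_i d_i=Z$.

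\textbf{Step 3: identifying the multiset.} The $d_i$ lie in $\{0,H,C,O_0,\ldots,O_{p-1}\}$ and sum to $Z$. Lemma~\ref{lem31} therefore forces the multiset $\{d_1,\ldots,d_h\}$ to be one of the two multisets in \eqref{eq32}, which is \eqref{eq36}.

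The only delicate point is the bookkeeping of constants in Steps 1 and 2. One has to check that the spacing $\alpha$ between admissible values of $d$ dominates both the interval half-widths ($\rho$ and $\tau$) and the accumulated error $2h\rho+\eps$. Both checks follow from $\rho<\alpha/(100h)$; the second constraint on $\rho$ in \eqref{eq33} is not needed for this lemma.
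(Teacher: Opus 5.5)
Your proof is correct and follows the paper's argument essentially line by line. The steps match: disjointness from $\rho<\alpha/(100h)$ and the integer spacing of $\mathcal D$; a bound on $\alpha\bigl|\sum_i d_i-Z\bigr|$ by the accumulated interval errors plus $\eps$; integrality; and finally Lemma~\ref{lem31}. The only cosmetic difference is that your error bound $2h\rho+\eps$ is slightly sharper than the paper's $h\tau+h\rho+\eps$, which changes nothing.
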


\begin{proof}
By \eqref{eq33}, $\tau=2\rho$, and the fact that $d,d'$ are distinct positive integers,
\[
 d\ge1,\quad |d-d'|\ge1,
 \qquad \rho<\frac\alpha{100h}<\frac\alpha3.
\]
Hence
\[
 \alpha d-\rho\ge\alpha-\rho>2\rho=\tau,
 \qquad \alpha|d-d'|\ge\alpha>2\rho,
\]
which proves \eqref{eq35}.
Moreover, from the intervals containing the $x_i$,
\[
 \left|\frac{x_i}{L}-\alpha d_i\right|
 \le
 \begin{cases}
 \tau,&d_i=0,\\
 \rho,&d_i\in\mathcal D.
 \end{cases}
\]
Since $n=\sum_i x_i$, $\omega=\alpha Z$, and $|n/L-\omega|\le\eps$,
\begin{align*}
 \left|\alpha\left(\sum_{i=1}^h d_i-Z\right)\right|
 &=\left|\sum_{i=1}^h\left(\alpha d_i-\frac{x_i}{L}\right)
   +\left(\frac nL-\omega\right)\right|\\
 &\le h\tau+h\rho+\eps\\
 &=\left(3h+\frac1{100}\right)\rho
 <\frac{3h+1/100}{100h}\alpha
 <\frac\alpha2.
\end{align*}
Since $\sum_i d_i-Z\in\mathbb Z$,
\[
 \left|\sum_{i=1}^h d_i-Z\right|<\frac12,
 \qquad \sum_{i=1}^h d_i=Z.
\]
The second assertion in \eqref{eq36} now follows from
\eqref{eq32} in Lemma~\ref{lem31}.
\end{proof}

Put $\mu=3h/4$ and $W=\omega+\eps$. Fix
\[
 R>\max\left\{2,\frac1\tau,\frac{5h}{\eps},
                  \frac{5h}{8(\omega-\eps)}\right\},
\]
and let
\begin{equation}\label{eq37}
 \delta=\frac1{2R},\qquad\lambda=\frac{5h}{8R}.
\end{equation}
Then $1/R<\tau$, $\delta<\eps/(10h)$, and $\lambda<\omega-\eps$, and
\begin{equation}\label{eq38}
 h\delta<\lambda,\qquad \lambda R<\mu,\qquad \delta R=1/2>W.
\end{equation}
Indeed, by \eqref{eq37},
\[
 h\delta=\frac h{2R}<\frac{5h}{8R}=\lambda,
 \qquad \lambda R=\frac{5h}{8}<\frac{3h}{4}=\mu.
\]
Also, since $\omega=1/4$ and $\eps=\rho/100<1/4$,
\[
 \delta R=\frac12>\frac14+\eps=W.
\]
Put
\[
 u=\frac{\omega-\eps/2}{h},\qquad v=\omega+\eps/2,
 \qquad w_0=\min\{u-\delta,1-v\},
\]
\[
 d_0=\frac18\min\{\lambda-h\delta,\eps/2-p\delta,
                         h-\mu,1-2v-h\delta\},
\]
\[
 c_{\rm E}=\frac{d_0w_0}{16h},\qquad
 c_{\rm J}=\frac14\min\{p\rho-\delta-\eps,
                     (p+1)\rho-\eps,\rho-\delta,2\rho\},
\]
\[
 c_{\rm g}=\min\{c_{\rm E},c_{\rm J},1/(10p)\}>0.
\]
All the quantities in these minima are positive by \eqref{eq33},
\eqref{eq37}, and \eqref{eq38}. They are fixed before the parameters below.
Fix
\begin{equation}\label{eq39}
 \begin{gathered}
 0<\sigma<\frac{\eps}{100h},\\
 0<\zeta\le
 \min\left\{\frac12,\frac{\eps}{24\sigma},\frac{c_{\rm g}}8\right\},\\
 0<\eta\le
 \min\left\{
 \frac1{2\sigma},\frac{\zeta}{2^p},
 \frac{2^{-p-2}\zeta}{p+1},\frac{\zeta}{1000}
 \right\},\\
 \eta\le\min\left\{
 \frac{2^{-p-1}\zeta}{(2h+3)(h+1)},
 \frac{\rho}{8(2h+3)\sigma}
 \right\},\\
 \eta\le\min\left\{
 \frac{\zeta}{2^{h+3}(h+7)},
 \frac{c_{\rm g}}{8\bigl(h(h+2)+4h+10\bigr)}
 \right\},\\
 q_0>\max\left\{1000h^3,\frac{96}{\zeta}\right\},\qquad
 C_*>\max\left\{1,\frac{96}{\zeta\eta q_0^4}\right\}.
 \end{gathered}
\end{equation}
At every application of Lemma~\ref{lem28}\textup{(i)} below,
$2\le m\le h$ and $c\ge c_{\rm g}$. Therefore \eqref{eq39} gives
\[
 0<\zeta\le\frac c8,
 \qquad
 0<\eta\le
 \min\left\{
 \frac{\zeta}{2^{m+3}(m+7)},
 \frac{c}{8\bigl(m(m+2)+4m+10\bigr)}
 \right\}.
\]
These inequalities also give the required bounds in Lemma~\ref{lem25}
when it is used in the proof of Lemma~\ref{lem28}, with $K=m+2$.
The three margins used below are $c_{\rm E}$, $c_{\rm J}$ and, when $p\ge2$,
$1/(10p)$. Thus no subsequent reduction of $\zeta$ or $\eta$ is needed.
All implied constants below may depend on $h,\theta$ and the fixed parameters,
but are uniform in $L,N,j$ and the primes unless otherwise indicated.
Let $L$ be sufficiently large in terms of the fixed parameters, and suppose that
\begin{equation}\label{eq310}
 A_0=\{e_1,\ldots,e_N\}\subseteq[0,L/R],\qquad
 T=\lfloor\sigma L^\kappa\rfloor,\qquad Q_1\le\eta T.
\end{equation}
Assume that $q_1,\ldots,q_N$ satisfy \eqref{eq21} and that
$A_0$ satisfies \eqref{eq22}. Let $k,Q$ be given by \eqref{eq23}. Then
\[
 1\le k\le N,\qquad Q=Q_k\le\eta T.
\]

Fix
\[
 a:=e_j,\qquad q:=q_j,
\]
and require
\begin{equation}\label{eq311}
 j\le k,\qquad a\le\rho T/10,\qquad q_j^p\le L^{h\theta-1}.
\end{equation}
Under \eqref{eq310} and \eqref{eq311}, put
\begin{equation}\label{eq312}
 p:=h-1,\qquad B_0:=100h.
\end{equation}
Let
\begin{equation}\label{eq313}
 r\equiv a+hB_0\pmod q,
 \qquad
 t\equiv a+pr\pmod q.
\end{equation}
Define
\begin{equation}\label{eq314}
 \mathcal R_q
 :=\{u\bmod q:u\in\mathbb Z,\ pB_0-2\le u\le hB_0\}
 \subseteq\mathbb Z_q.
\end{equation}
Also put
\begin{equation}\label{eq315}
 m_q:=\left\lceil\frac qh\right\rceil,
 \qquad
 \mathcal B_q
 :=\{u\bmod q:u\in\mathbb Z,\ hB_0\le u\le hB_0+m_q\}.
\end{equation}

\begin{lemma}\label{lem33}
If $q\ge q_0>10h^2B_0$, then
\begin{equation}\label{eq316}
 t\equiv ha+phB_0\pmod q.
\end{equation}
Moreover,
\[
 \{pB_0,hB_0\}\bmod q\subseteq\mathcal R_q.
\]
When $h=2$,
\[
 \bigl(\{B_0+u:-2\le u\le2\}\cup\{2B_0\}\bigr)\bmod q
 \subseteq\mathcal R_q.
\]
Furthermore,
\begin{equation}\label{eq317}
 0<hB_0<hB_0+m_q<q,
 \qquad 0\notin\mathcal B_q,
\end{equation}
and
\begin{equation}\label{eq318}
 h\mathcal B_q
 =[h^2B_0,h^2B_0+hm_q]\pmod q
 =\mathbb Z_q.
\end{equation}
If
\[
 1\le s\le p,\quad
 \beta_i\in[hB_0,hB_0+m_q]\ (1\le i\le s),\quad
 \gamma_j\in[pB_0-2,hB_0]\ (1\le j\le h-s),
\]
and
\[
 D:=\sum_{i=1}^s\beta_i+
 \sum_{j=1}^{h-s}\gamma_j-phB_0,
\]
then
\begin{equation}\label{eq319}
 0<D<q,\qquad
 ha+\sum_{i=1}^s\beta_i+
 \sum_{j=1}^{h-s}\gamma_j
 \equiv t+D\not\equiv t\pmod q.
\end{equation}
\end{lemma}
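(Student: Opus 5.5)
All assertions of Lemma~\ref{lem33} are elementary congruence computations. The plan is to reduce each to an inequality among the integers $B_0=100h$, $m_q=\lceil q/h\rceil$ and $q$, using $q\ge q_0>10h^2B_0$ (indeed $q_0>1000h^3=10h^2B_0$ by \eqref{eq39}).

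For \eqref{eq316}, substitute \eqref{eq313}: $t\equiv a+p(a+hB_0)=ha+phB_0\pmod q$, since $1+p=h$.

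For the inclusions into $\mathcal R_q$, check that the relevant integers lie in $[pB_0-2,hB_0]$:
\begin{itemize}
\item $pB_0$ and $hB_0$ lie there trivially.
\item When $h=2$ we have $p=1$, so the range is $[B_0-2,2B_0]$. It contains $B_0+u$ for $-2\le u\le2$, because $B_0+2\le 2B_0$, and it contains $2B_0$.
\end{itemize}

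For \eqref{eq317}:
\begin{itemize}
\item The bound $hB_0+m_q<q$ follows from $m_q\le q/h+1$ and $hB_0+1<q(1-1/h)$. The latter holds since $q(1-1/h)\ge q/2>5h^2B_0$.
\item Since $0<hB_0\le u\le hB_0+m_q<q$, no such $u$ is divisible by $q$, so $0\notin\mathcal B_q$.
\end{itemize}

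For \eqref{eq318}, the $h$-fold sumset of the integer interval $[hB_0,hB_0+m_q]$ is the integer interval $[h^2B_0,h^2B_0+hm_q]$. This interval has $hm_q+1\ge q+1>q$ consecutive integers, so its reduction mod $q$ is all of $\mathbb Z_q$.

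For \eqref{eq319}, bound $D$ from both sides.
\begin{itemize}
\item Lower bound. Each $\beta_i\ge hB_0$ and each $\gamma_j\ge pB_0-2$, so
\[
D\ge shB_0+(h-s)(pB_0-2)-phB_0=sB_0-2(h-s).
\]
Here we used $sh+(h-s)p-ph=s(h-p)=s$. Since $s\ge1$ and $B_0=100h>2h$, this gives $D>0$.
\item Upper bound. Each $\beta_i\le hB_0+m_q$ and each $\gamma_j\le hB_0$, so
\[
D\le h\cdot hB_0+sm_q-phB_0=hB_0+sm_q\le hB_0+p(q/h+1).
\]
This is less than $q$ because $q/h>hB_0+p$, which holds as $q>10h^2B_0$.
\item Congruence. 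By \eqref{eq316}, $ha+\sum\beta_i+\sum\gamma_j=ha+phB_0+D\equiv t+D\pmod q$. Since $0<D<q$, we get $D\not\equiv0$, hence $t+D\not\equiv t\pmod q$.
\end{itemize}

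The only delicate point is the upper bound $D<q$ for $s=p$, which needs the margin $m_q\approx q/h$ together with $p<h$. I expect no real obstacle beyond tracking these constants.
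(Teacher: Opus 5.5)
Your proposal is correct and follows essentially the same route as the paper: substitute \eqref{eq313} for \eqref{eq316}, check interval membership for the $\mathcal R_q$ inclusions, use $m_q\le q/h+1$ together with $hm_q\ge q$ for \eqref{eq317}--\eqref{eq318}, and bound $D$ below by $sB_0-2(h-s)$ and above by $hB_0+sm_q\le hB_0+p(q/h+1)<q$. You also spell out why $0\notin\mathcal B_q$, which the paper leaves implicit.
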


\begin{proof}
Since $p=h-1$, \eqref{eq313} gives
\[
 t\equiv a+p(a+hB_0)=ha+phB_0\pmod q.
\]
By \eqref{eq314},
\[
 \{pB_0,hB_0\}\subseteq[pB_0-2,hB_0].
\]
When $h=2$,
\[
 \{B_0-2,B_0-1,B_0,B_0+1,B_0+2,2B_0\}
 \subseteq[B_0-2,2B_0].
\]
Since $m_q\le q/h+1$ and $q>10h^2B_0$,
\[
 q-(hB_0+m_q)
 \ge\left(1-\frac1h\right)q-hB_0-1>0,
\]
which proves \eqref{eq317}. Since $hm_q\ge q$,
\[
 h\mathcal B_q
 =[h^2B_0,h^2B_0+hm_q]\pmod q
 =\mathbb Z_q,
\]
which is \eqref{eq318}.

Finally, since $B_0=100h$ and $1\le s\le p=h-1$,
\begin{align*}
 D
 &\ge shB_0+(h-s)(pB_0-2)-phB_0\\
 &=sB_0-2(h-s)\\
 &\ge B_0-2(h-1)>0,
\end{align*}
and
\begin{align*}
 D
 &\le s(hB_0+m_q)+(h-s)hB_0-phB_0\\
 &=hB_0+sm_q\\
 &\le hB_0+p\left(\frac qh+1\right)\\
 &=q-\frac qh+hB_0+p<q.
\end{align*}
Together with \eqref{eq316}, this gives
\[
 ha+\sum_i\beta_i+\sum_j\gamma_j
 \equiv ha+phB_0+D
 \equiv t+D\not\equiv t\pmod q.
\]
\end{proof}

\begin{lemma}\label{lem34}
Assume that \eqref{eq310}, \eqref{eq21}, \eqref{eq22}, and \eqref{eq311} hold. Then there exists a set
\[
 \mathcal E(L)\subseteq[\delta L,L]\cap\mathbb Z
\]
such that
\begin{equation}\label{eq320}
 |\mathcal E(L)|=O(L^\theta),
\end{equation}
and
\begin{equation}\label{eq321}
\begin{aligned}
 x&\not\equiv e_i\pmod{q_i}
 &&(x\in\mathcal E(L),\ 1\le i\le k),\\
 x&\not\equiv e_{k+1}\pmod{q_{k+1}}
 &&(x\in\mathcal E(L),\ k<N,\ q_{k+1}\le2L).
\end{aligned}
\end{equation}
Let
\[
 J:=[(\omega-\eps)L,(\omega+\eps)L].
\]
Then
\begin{equation}\label{eq322}
 [\lambda L,\mu L]\setminus J\subseteq h\mathcal E(L).
\end{equation}
Furthermore, suppose that
\begin{equation}\label{eq323}
 \mathcal C\subseteq
 \left(
 [0,\tau L]\cup
 \bigcup_{d\in\mathcal D}B_d(L)
 \right)\cap\mathbb Z,
\end{equation}
and that for every $x\in\mathcal C\cap\bigcup_{d\in\mathcal D}B_d(L)$,
\begin{equation}\label{eq324}
 (x-a)\bmod q\in\mathcal R_q.
\end{equation}
Then, for
\[
 K:=[(\omega-\eps/4)L,(\omega+\eps/4)L],
\]
we have
\begin{equation}\label{eq325}
 \left(K\cap(t+q\mathbb Z)\right)
 \cap
 \bigcup_{s=1}^h
 \left(s\mathcal E(L)+(h-s)\mathcal C\right)
 =\varnothing.
\end{equation}
\end{lemma}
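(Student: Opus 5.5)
We build $\mathcal E(L)$ as a union of a bounded number of pieces, one for each configuration type, so that every $n\in[\lambda L,\mu L]\setminus J$ is an $h$-fold sum. Every element of $\mathcal E(L)$ will lie in a residue class modulo $q$ that is controlled relative to $a$.

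\textbf{Construction.} We split $[\lambda L,\mu L]\setminus J$ into the lower part $[\lambda L,(\omega-\eps)L)$ and the upper part $((\omega+\eps)L,\mu L]$. Lemma~\ref{lem26} with $m=h$ then covers each part by $O(1)$ intervals $U_j(L)$ together with summand intervals $V_{i,j}(L)\subseteq[\delta L,L]$. For the lower part, each summand is near $u$ and the margins are governed by $d_0,w_0$. For the upper part, the margin is $1-2v-h\delta$. In both cases the margin is at least $c_{\rm E}\ge c_{\rm g}$.

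On each block we choose residues. At the prime $q$ we take all $h$ summand residue sets equal to $a+\mathcal B_q$. By \eqref{eq318}, their $h$-fold sum is all of $\mathbb Z_q$. At the other primes $q_\nu$, $\nu\le k$, $\nu\neq j$, we use Lemma~\ref{lem24}. Since $a+\mathcal B_q\not\ni e_j$ by \eqref{eq317}, Lemma~\ref{lem27} with $d=h$ applies and gives sets $R_i$ whose $h$-fold sum covers every class modulo $Q$. Their sizes satisfy $|R_i|\ll m_q(Q/q)^{1/h}\ll q^{1-1/h}Q^{1/h}$.

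Lemma~\ref{lem28}(i) with $X=L$ then produces sets $D_i\subseteq V_{i,j}(L)$ satisfying \eqref{eq321}, with
\[
|D_i|\ll (L/Q)^{1/h}q^{1-1/h}Q^{1/h}=L^{1/h}q^{p/h}.
\]
By \eqref{eq311} we have $q^{p}\le L^{h\theta-1}$, so $|D_i|\ll L^{1/h}L^{\theta-1/h}=L^\theta$. Taking the union over all $O(1)$ blocks gives \eqref{eq320}, \eqref{eq321} and \eqref{eq322}. Every element $x\in\mathcal E(L)$ then satisfies $(x-a)\bmod q\in\mathcal B_q$, which is the shifted interval $[hB_0,hB_0+m_q]$.

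\textbf{Proof of \eqref{eq325}.} Suppose $n\in K\cap(t+q\mathbb Z)$ is written as $n=\sum_{i=1}^s x_i+\sum_{i=1}^{h-s}y_i$ with $x_i\in\mathcal E(L)$, $y_i\in\mathcal C$ and $s\ge1$.

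First suppose some $y_i\in[0,\tau L]$. The $x_i$ lie in $[\delta L,L]$, and we use the support of $\mathcal E(L)$, which sits near the specific points from the covering. Since $n\in K$ while the $\mathcal E$-blocks avoid $J$, a size argument should exclude this case. This needs care: I would record in the construction that every sum of $\mathcal E$-elements with small extra terms avoids $K$, using the margins $c_{\rm J}$.

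Otherwise every $y_i$ lies in some $B_d(L)$, so $(y_i-a)\bmod q\in\mathcal R_q$. Then
\[
n\equiv ha+\sum\beta_i+\sum\gamma_j \pmod q
\]
with $\beta_i\in[hB_0,hB_0+m_q]$ and $\gamma_j\in[pB_0-2,hB_0]$. If $s\le p$, Lemma~\ref{lem33}, specifically \eqref{eq319}, gives $n\not\equiv t$, a contradiction.

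The case $s=h$, where all summands lie in $\mathcal E(L)$, is the main obstacle. Modular information cannot exclude it, since $h\mathcal B_q=\mathbb Z_q$. It must instead be excluded by size: $h\mathcal E(L)$ should avoid $K$ entirely. I would secure this in the construction by choosing the summand intervals so that their sums lie within $[\lambda L,(\omega-\eps/2)L]\cup[(\omega+\eps/2)L,\mu L]$. The covering of Lemma~\ref{lem26}, together with the endpoints $u,v$ built from $\omega\mp\eps/2$, gives exactly this separation from $K$. The same size separation, with margin $c_{\rm J}$, also handles the mixed case with small $y_i\in[0,\tau L]$ treated above.
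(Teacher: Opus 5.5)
Your construction of $\mathcal E(L)$ matches the paper: residues $a+\mathcal B_q$ at $q$, Lemma~\ref{lem27} with $d=h$, Lemma~\ref{lem28}(i), and the bound $|D_i|\ll L^{1/h}q^{p/h}\le L^\theta$. Your treatment of the case $1\le s\le p$ with every $y_j\in\bigcup_dB_d(L)$ via \eqref{eq319} is also correct. The gap is in the two size exclusions, which you either leave open or argue by a mechanism that does not work.

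For $s=h$, arranging that each block sum $V_{m,1}+\cdots+V_{m,h}$ avoids $K$ does not control $h\mathcal E(L)$. That set contains sums $x_1+\cdots+x_h$ whose terms come from different blocks $m$ and different positions $\nu$. Nothing in your construction, which only places summands in $[\delta L,L]$, stops such mixed sums from landing in $K$.

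For the case with some $y_j\in[0,\tau L]$, you defer the argument to an unspecified property ``recorded in the construction'' and invoke $c_{\rm J}$. But $c_{\rm J}$ is the margin for covering $J$ by $[\delta L,\rho L]+pB_H(L)$ in Lemma~\ref{lem37}; it has nothing to do with $\mathcal E(L)$. This case genuinely needs information about individual elements. For $h=3$, an $x\in\mathcal E(L)$ near $\omega L/2$, together with $y_1\approx0$ and $y_2\in B_H(L)$ (near $\omega L/2$), gives a sum in $K$.

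The missing idea is an elementwise dichotomy. Apply Lemma~\ref{lem26} with every $\mathcal I_\nu$ equal to $I_-=[\delta,u]$ or $I_+=[v,1]$, where $hu=\omega-\eps/2$ and $v=\omega+\eps/2$. This works because $[\lambda,\omega-\eps]\subseteq\operatorname{int}(hI_-)$, and the overlapping intervals $rI_++(h-r)I_-$ for $1\le r\le h$ cover $[\omega+\eps,\mu]$; this is where $1-2v-h\delta>0$ actually enters. Since the summand intervals lie in $\operatorname{int}I_\pm$ and each $F_{m,\nu}\subseteq LV_{m,\nu}$, every $x\in\mathcal E(L)$ lies in $LI_-\cup LI_+$.

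Both exclusions then follow for arbitrary mixed sums:
\begin{itemize}
\item Any $x_i\in LI_+$ gives $n\ge vL>\max K$, so all $x_i\le uL$.
\item If $s=h$, then $n\le huL<\min K$.
\item If $1\le s\le p$ and some $y_j\le\tau L$, bound the remaining $h-2$ terms by $(\omega/p+\rho)L$, using $u<\omega/p$ and $B_d(L)\subseteq[0,(\omega/p+\rho)L]$. This gives
\[
 n\le\Bigl[u+\tau+(h-2)\Bigl(\frac\omega p+\rho\Bigr)\Bigr]L<\Bigl(\omega-\frac\eps4\Bigr)L,
\]
since the difference is at least $\omega/(hp)-h\rho-\eps/4>0$ by \eqref{eq33}.
\end{itemize}
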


\begin{proof}
Let
\[
 J_0:=[\omega-\eps,\omega+\eps],
 \qquad J=(LJ_0)\cap\mathbb Z,
\]
and let $I_-,I_+,I_r$ be the following real intervals:
\[
 u:=\frac{\omega-\eps/2}{h},
 \qquad
 v:=\omega+\frac\eps2,
 \qquad
 I_-:=[\delta,u],
 \qquad
 I_+:=[v,1],
\]
\begin{equation}\label{eq326}
 I_r:=rI_++(h-r)I_-
 =[rv+(h-r)\delta,\ r+(h-r)u]
 \qquad(0\le r\le h).
\end{equation}
By \eqref{eq38},
\begin{align*}
 \min I_0&=h\delta<\lambda,\\
 \max I_0&=hu=\omega-\frac\eps2>\omega-\eps,
\end{align*}
and hence
\begin{equation}\label{eq327}
 [\lambda,\omega-\eps]\subseteq\operatorname{int}I_0.
\end{equation}
Also,
\begin{align*}
 \min I_1
 &=v+(h-1)\delta
 <\omega+\frac\eps2+\frac\eps{10}
 <\omega+\eps,\\
 \max I_1
 &=1+(h-1)u
 >\omega+\eps,
\end{align*}
and therefore
\begin{equation}\label{eq328}
 \omega+\eps\in\operatorname{int}I_1.
\end{equation}
For $1\le r<h$,
\begin{align*}
 \max I_r-\min I_{r+1}
 &=r-(r+1)v+(h-r)u-(h-r-1)\delta\\
 &\ge r-(r+1)v-(h-r-1)\delta\\
 &\ge1-2v-h\delta\\
 &=1-2\omega-\eps-h\delta>0.
\end{align*}
Thus
\begin{equation}\label{eq329}
 I_r\cap I_{r+1}\ne\varnothing
 \qquad(1\le r<h).
\end{equation}
Since
\[
 \max I_h=h>\mu,
\]
we obtain
\begin{equation}\label{eq330}
 [\lambda,\omega-\eps]\cup[\omega+\eps,\mu]
 \subseteq\bigcup_{r=0}^h\operatorname{int}I_r.
\end{equation}
Put
\[
 \mathcal A:=[\lambda,\omega-\eps]\cup[\omega+\eps,\mu].
\]
By the definition of $d_0$ and the preceding endpoint estimates,
\[
 \begin{gathered}
 \lambda-\min I_0\ge8d_0,\qquad
 \max I_0-(\omega-\eps)\ge8d_0,\\
 (\omega+\eps)-\min I_1\ge8d_0,\qquad
 \max I_h-\mu\ge8d_0,\\
 \max I_r-\min I_{r+1}\ge8d_0\quad(1\le r<h).
 \end{gathered}
\]
Define the real closed intervals
\[
 W_0=[\lambda,\omega-\eps],\qquad
 W_r=[\omega+\eps,\mu]\cap
       [\min I_r+d_0,\max I_r-d_0]\quad(1\le r\le h),
\]
discarding empty $W_r$. Their union is $\mathcal A$, and each $W_r$ lies
at distance at least $d_0$ from both endpoints of $I_r$.
For each remaining $r$, apply Lemma~\ref{lem26} to $[A,B]=W_r$ and
\[
 \mathcal I_\nu=
 \begin{cases}
 I_-,&1\le\nu\le h-r,\\
 I_+,&h-r<\nu\le h,
 \end{cases}
 \qquad(1\le\nu\le h).
\]
In the notation of that lemma, $d\ge d_0$, $w\ge w_0$ and $D\le h$, so
\[
 \frac{dw}{8D}\ge\frac{d_0w_0}{8h}=2c_{\rm E}.
\]
Thus the same choice $c=c_{\rm E}$, $C=3c_{\rm E}$ is valid for every $r$.
Combining and relabeling the finitely many resulting intervals, we obtain $N_0$ families of real closed intervals
$U_m,V_{m,\nu}$ $(1\le m\le N_0$, $1\le\nu\le h)$, where
\[
 N_0\in\mathbb Z_{\ge1},\qquad
 N_0\le(h+1)(2+h/c_{\rm E})=O(1),
 \qquad
 r_m\in\{0,\ldots,h\},
\]
such that
\begin{align}
 &\prod_{\nu=1}^hV_{m,\nu}
 \subseteq(\operatorname{int}I_-)^{h-r_m}\times(\operatorname{int}I_+)^{r_m},
 \label{eq331}\\
 &\mathcal A\subseteq\bigcup_{m=1}^{N_0}U_m,
 \label{eq332}\\
 &U_m\subseteq\sum_{\nu=1}^hV_{m,\nu}
 \qquad(1\le m\le N_0).
 \label{eq333}
\end{align}
By \eqref{eq222} in Lemma~\ref{lem26}, for $L\ge1/c_{\rm E}$,
\begin{equation}\label{eq334}
 \begin{gathered}
 |LV_{m,\nu}\cap\mathbb Z|\ge c_{\rm E}L,\qquad
 |LU_m\cap\mathbb Z|\le3c_{\rm E}L,\\
 (LU_m\cap\mathbb Z)+([-c_{\rm E}L,c_{\rm E}L]\cap\mathbb Z)
 \subseteq\sum_{\nu=1}^h(LV_{m,\nu}\cap\mathbb Z).
 \end{gathered}
\end{equation}

Fix $m$. By \eqref{eq317} and \eqref{eq318},
\[
 a\notin a+\mathcal B_q\subseteq\mathbb Z_q,
 \qquad h(a+\mathcal B_q)=\mathbb Z_q.
\]
Apply Lemma~\ref{lem27} with
\[
 d=h,\qquad S_{\nu-1}=a+\mathcal B_q
 \quad(1\le\nu\le h),
\]
and denote the resulting set $R_{\nu-1}$ by
$R_{m,\nu}\subseteq[0,Q-1]\cap\mathbb Z$.
By \eqref{eq223},
\begin{equation}\label{eq335}
 x\bmod q\in a+\mathcal B_q,\qquad
 x\not\equiv e_i\pmod{q_i}
 \quad(x\in R_{m,\nu},\ 1\le i\le k,\ 1\le\nu\le h).
\end{equation}
By \eqref{eq224} and $h(a+\mathcal B_q)=\mathbb Z_q$,
\begin{equation}\label{eq336}
 (R_{m,1}+\cdots+R_{m,h})\bmod Q
 =\{u\bmod Q:u\bmod q\in h(a+\mathcal B_q)\}
 =\mathbb Z_Q.
\end{equation}
Finally, by \eqref{eq225} and $|\mathcal B_q|\le q$,
\begin{equation}\label{eq337}
 |R_{m,\nu}|\ll_h|\mathcal B_q|\left(\frac Qq\right)^{1/h}
 \ll_h q\left(\frac Qq\right)^{1/h}.
\end{equation}

For each $m$, \eqref{eq336} gives $\Omega=\mathbb Z_Q$.
Apply Lemma~\ref{lem28}\textup{(i)} with $h$ summands and
\[
 X=L,\quad U=LU_m\cap\mathbb Z,\quad
 V_\nu=LV_{m,\nu}\cap\mathbb Z,\quad
 R_\nu=R_{m,\nu}\quad(1\le\nu\le h).
\]
By \eqref{eq334}, \eqref{eq335}, and
\[
 Q\le\eta T\le\eta\sigma L,
\]
all the hypotheses of Lemma~\ref{lem28}\textup{(i)} are satisfied.
Denote the resulting sets by $F_{m,\nu}$. Then
\begin{equation}\label{eq338}
 (LU_m)\cap\mathbb Z\subseteq F_{m,1}+\cdots+F_{m,h},
\end{equation}
\begin{equation}\label{eq339}
 F_{m,\nu}\subseteq LV_{m,\nu},\qquad
 |F_{m,\nu}|\ll(L/Q)^{1/h}|R_{m,\nu}|,
\end{equation}
and every element of every $F_{m,\nu}$ satisfies \eqref{eq24}.
Let
\[
 \mathcal E(L):=
 \bigcup_{m=1}^{N_0}\bigcup_{\nu=1}^hF_{m,\nu}.
\]
By \eqref{eq331} and \eqref{eq339},
\begin{equation}\label{eq340}
 \mathcal E(L)
 \subseteq LI_-\cup LI_+
 \subseteq[\delta L,L].
\end{equation}
Let $n\in[\lambda L,\mu L]\setminus J$. By \eqref{eq332}, there exists
$m\in\{1,\ldots,N_0\}$ such that $n/L\in U_m$. Hence \eqref{eq338} gives
\[
 n\in F_{m,1}+\cdots+F_{m,h}\subseteq h\mathcal E(L).
\]
Thus \eqref{eq322} follows.

By \eqref{eq339} and \eqref{eq337},
\begin{align*}
 |F_{m,\nu}|
 &\ll\left(\frac LQ\right)^{1/h}
 q\left(\frac Qq\right)^{1/h}\\
 &=q^{1-1/h}L^{1/h}\\
 &=q^{p/h}L^{1/h}\\
 &=(q^p)^{1/h}L^{1/h}\\
 &\le L^{(h\theta-1)/h}L^{1/h}\\
 &=L^\theta.
\end{align*}
Therefore
\[
 |\mathcal E(L)|
 \le\sum_{m=1}^{N_0}\sum_{\nu=1}^h|F_{m,\nu}|
 =O(L^\theta),
\]
which is \eqref{eq320}. Also,
\begin{align*}
 x&\not\equiv e_i\pmod{q_i}
 &&(x\in\mathcal E(L),\ 1\le i\le k),\\
 x&\not\equiv e_{k+1}\pmod{q_{k+1}}
 &&(x\in\mathcal E(L),\ k<N,\ q_{k+1}\le2L),
\end{align*}
which is \eqref{eq321}.
Moreover,
\[
 x\in F_{m,\nu}
 \Longrightarrow x\bmod Q\in R_{m,\nu}
 \Longrightarrow x\bmod q\in a+\mathcal B_q,
\]
and hence
\begin{equation}\label{eq341}
 (x-a)\bmod q\in\mathcal B_q\qquad(x\in\mathcal E(L)).
\end{equation}

To prove \eqref{eq325}, suppose to the contrary that there exist
\[
 1\le s\le h,
 \qquad
 x_1,\ldots,x_s\in\mathcal E(L),
 \qquad
 y_1,\ldots,y_{h-s}\in\mathcal C,
\]
such that
\begin{equation}\label{eq342}
 n:=\sum_{i=1}^s x_i+
 \sum_{j=1}^{h-s}y_j
 \in K\cap(t+q\mathbb Z).
\end{equation}
By \eqref{eq340},
\[
 x_i\in LI_-\cup LI_+
 \qquad(1\le i\le s).
\]
If $x_i\in LI_+$ for some $i$, then
\[
 n\ge x_i\ge vL
 =\left(\omega+\frac\eps2\right)L
 >\left(\omega+\frac\eps4\right)L
 \ge\max K,
\]
contradicting \eqref{eq342}. Hence
\begin{equation}\label{eq343}
 x_i\in LI_-
 \qquad(1\le i\le s).
\end{equation}
If $s=h$, then by \eqref{eq343},
\[
 n=\sum_{i=1}^h x_i
 \le huL
 =\left(\omega-\frac\eps2\right)L
 <\left(\omega-\frac\eps4\right)L
 \le\min K,
\]
again contradicting \eqref{eq342}. Therefore
\begin{equation}\label{eq344}
 1\le s\le h-1=p.
\end{equation}

By \eqref{eq323} and \eqref{eq344}, at least one of the following alternatives holds:
\begin{equation}\label{eq345}
 \begin{gathered}
 \text{there exists }j\text{ such that }y_j\in[0,\tau L],\\
 \text{or }y_j\in\bigcup_{d\in\mathcal D}B_d(L)\text{ for every }j.
 \end{gathered}
\end{equation}
Suppose that the first alternative holds, say $y_1\in[0,\tau L]$.
For every $d\in\mathcal D$, we have $0<d\le H$, and
\[
 \alpha H=\frac{\omega}{Z}H=\frac\omega p.
\]
Thus
\[
 B_d(L)
 \subseteq
 \left[0,\left(\frac\omega p+\rho\right)L\right]
 \qquad(d\in\mathcal D).
\]
Since
\[
 u=\frac{\omega-\eps/2}{h}
 <\frac\omega h<\frac\omega p,
\]
we obtain
\begin{equation}\label{eq346}
 n\le
 \left[
 u+\tau+(h-2)\left(\frac\omega p+\rho\right)
 \right]L.
\end{equation}
But
\begin{align*}
 &\omega-\frac\eps4
 -\left[
 u+\tau+(h-2)\left(\frac\omega p+\rho\right)
 \right]\\
 &=\frac{\omega}{hp}
 +\frac{\eps}{2h}-\frac\eps4
 -\tau-(h-2)\rho\\
 &\ge\frac{\omega}{hp}-h\rho-\frac\eps4\\
 &=\frac{\omega}{hp}-h\rho-\frac{\rho}{400}>0,
\end{align*}
where the last inequality follows from
\[
 \rho<\frac{\omega}{100h^2p}.
\]
Hence
\[
 n<\left(\omega-\frac\eps4\right)L\le\min K,
\]
a contradiction.

Therefore only the second alternative in \eqref{eq345} is possible, and hence
\[
 y_j\in\bigcup_{d\in\mathcal D}B_d(L)
 \qquad(1\le j\le h-s).
\]
By \eqref{eq341} and \eqref{eq324}, we may choose
\[
 \beta_i\in[hB_0,hB_0+m_q]
 \qquad(1\le i\le s),
\]
\[
 \gamma_j\in[pB_0-2,hB_0]
 \qquad(1\le j\le h-s),
\]
such that
\[
 x_i-a\equiv\beta_i\pmod q,
 \qquad
 y_j-a\equiv\gamma_j\pmod q.
\]
Put
\[
 D:=\sum_{i=1}^s\beta_i+
 \sum_{j=1}^{h-s}\gamma_j-phB_0.
\]
By \eqref{eq319},
\[
 0<D<q,
 \qquad
 n\equiv ha+\sum_{i=1}^s\beta_i+
 \sum_{j=1}^{h-s}\gamma_j
 \equiv t+D\not\equiv t\pmod q,
\]
again contradicting \eqref{eq342}.
Therefore
\[
 \left(K\cap(t+q\mathbb Z)\right)
 \cap
 \bigcup_{s=1}^h
 \left(s\mathcal E(L)+(h-s)\mathcal C\right)
 =\varnothing.
\]
\end{proof}

\begin{lemma}\label{lem35}
Assume that \eqref{eq310}, \eqref{eq21}, \eqref{eq22}, and \eqref{eq311} hold. Then there exist
\[
 J_1\subseteq J_0\subseteq K,\qquad
 V\subseteq B_C(L),\qquad U_i\subseteq B_{O_i}(L)\ (0\le i<p),
\]
such that
\begin{align}
 |J_0|&\asymp T,\qquad |J_1|\asymp T,\label{eq347}\\
 (J\setminus J_0)\cap(t+q\mathbb Z)
 &\subseteq V+U_0+\cdots+U_{p-1},\label{eq348}\\
 J_1\cap(V+U_0+\cdots+U_{p-1})&=\varnothing.\label{eq349}
\end{align}
Every $x\in V\cup\bigcup_iU_i$ satisfies
\begin{equation}\label{eq350}
\begin{aligned}
 x&\not\equiv e_\nu\pmod{q_\nu}\quad(1\le\nu\le k),\\
 x&\not\equiv e_{k+1}\pmod{q_{k+1}}
 \quad(k<N,\ q_{k+1}\le2L).
\end{aligned}
\end{equation}
If $p\ge2$, the sets may also be chosen so that
\begin{align}
 x&\equiv a+pB_0\pmod q\quad
 (x\in V\cup\bigcup_iU_i),\label{eq351}\\
 \sum_{i=0}^{p-1}|U_i|&=O(T^{1/p}),
 \qquad |V|=O(L/T).\label{eq352}
\end{align}
If $p=1$, they may also be chosen so that
\begin{align}
 x&\equiv a+B_0\pmod q\quad(x\in V\cup U_0),\label{eq353}\\
 |U_0|&=O(T/q),
 \qquad |V|=O(2^kL/T)=O(L^\theta).\label{eq354}
\end{align}
Moreover, one may choose
\begin{equation}\label{eq355}
 |J_0|\le\zeta T+1,
\end{equation}
and
\begin{equation}\label{eq356}
 |J_1|\ge
 \begin{cases}
 2^{-p-1}\zeta T,&p\ge2,\\
 \dfrac{\zeta}{2}T,&p=1.
 \end{cases}
\end{equation}
\end{lemma}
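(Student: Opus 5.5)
We need to cover $J\cap(t+q\mathbb Z)$, except for a short window $J_0$ of length $\asymp T$ inside $K$, by sums $V+U_0+\cdots+U_{p-1}$ whose summands lie in $B_C(L)$ and $B_{O_i}(L)$, carry the residue $a+pB_0$ modulo $q$, and avoid \eqref{eq24}. The key point is that $J_1\subseteq J_0$ is genuinely missed, which later forces $E_a$ to be large.

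First I set up the geometry. By Lemma~\ref{lem31}, $\alpha(C+\sum_iO_i)=\alpha Z=\omega$, so $B_C(L)+\sum_iB_{O_i}(L)=[(\omega-h\rho)L,(\omega+h\rho)L]$. Since $\eps=\rho/100$, this sum contains $J$ with margin $\gg\rho L$. I then apply Lemma~\ref{lem26} with $m=h$, $\mathcal I$ the real intervals $B_C(1)$ and $B_{O_i}(1)$, and $[A,B]=J_0^{\rm real}$. This gives $O(1)$ families of integer intervals $U^{(j)},V^{(j)}_\nu$ satisfying \eqref{eq218} with $c\ge c_{\rm J}$.

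Next comes the residue bookkeeping. We need $h$ summands, each $\equiv a+pB_0\pmod q$, so the sum is $\equiv ha+hpB_0\equiv t\pmod q$ by \eqref{eq316}. Here $pB_0\in\mathcal R_q$ by Lemma~\ref{lem33}, which is what Lemma~\ref{lem34} later requires via \eqref{eq324}.

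For $p\ge2$, I apply Lemma~\ref{lem27} with $d=h$ to the singletons $S_i=\{a+pB_0\}$; these avoid $a$ since $pB_0\not\equiv0$. This is wasteful, though: all $h$ summands get size $(Q/q)^{1/h}$ before the lift. To match \eqref{eq352} I would instead use the unbalanced lift, \emph{not} Lemma~\ref{lem28}(i) verbatim:
\begin{itemize}
\item give $V$ a full residue set modulo $Q$ (the analogue of $B_\nu$ in the second half of Lemma~\ref{lem27}), of size about $Q/q$, with digit block of length about $T$, so that $|V|\approx L/T$;
\item give the $U_i$ base-$M$ digit blocks with $M\approx T^{1/p}$ and bounded residue sets, so that $\sum|U_i|=O(T^{1/p})$, modulo the $2^k$-type residue factor, which is $L^{o(1)}$ but must be absorbed.
\end{itemize}
Concretely, $V=R_V+Q\cdot T'\cdot[0,L/T]$ and $U_i=R_{U}+QM^i[0,M-1]$ with $M^p\approx T/Q$. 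Then the $U$-part covers every residue window of length $\asymp T$, and translating $V$ in steps of $T$ tiles everything. For $p=1$, I use \eqref{eq226} with $S_0=S_1=\{a+B_0\}$: $R_0$ has $2^{k-1}$ elements and is assigned to $V$ with step $T$, giving $|V|=O(2^kL/T)$; $R_1$ has $\le Q/q$ elements and fills an interval of length $T$ for $U_0$, giving $|U_0|=O(T/q)$. Since $2^k=L^{o(1)}$ and $L/T\asymp L^{1-\theta}\le L^\theta$, we get $|V|=O(L^\theta)$. Lemma~\ref{lem28}(ii) and (iii) then handle the $q_{k+1}$ condition, with displacement $O(Q)$.

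To create the gap, I apply Lemma~\ref{lem29} with $G\asymp\zeta T$, $\mathcal T=t+q\mathbb Z$, and $\mathcal F_m$ the sumset piece covering the block $I_m$. The union of the pieces for $m\ne m_0$ gives $\mathcal H$, with $\Delta=h(h+1)Q$ from Lemma~\ref{lem28}(ii). Then \eqref{eq233} yields \eqref{eq348} and \eqref{eq349}, and \eqref{eq234} with $Q\le\eta T$ and the $\eta$ bounds in \eqref{eq39} gives \eqref{eq355} and \eqref{eq356}. The requirement $|K|>6G$ holds since $\zeta\le\eps/(24\sigma)$.

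The main obstacle is making each block's sumset piece localized, i.e.\ contained in $I_m$ up to $dQ$, while keeping the sizes in \eqref{eq352}. To get this I make the $V$ step exactly $G$ and let each block use only one $V$-translate, so that $\mathcal F_m$ is controlled by the choice of $b_{i,\nu}$ in Lemma~\ref{lem25}.
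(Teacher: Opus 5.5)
For $p=1$ your plan matches the paper: the $2^{k-1}$-element set from \eqref{eq226} goes on $V$ and the set of at most $Q/q$ elements goes on $U_0$. The gap mechanism via Lemma~\ref{lem29} with $\Delta=h(h+1)Q$ also matches. For $p\ge2$, however, you put the residue load on the wrong summand, and \eqref{eq352} then fails.

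\textbf{The $p\ge2$ residue allocation.} $V$ needs a translate of $R_V$ for each of the $|\mathcal M|\asymp L/T$ blocks $I_m$. So your $V=R_V+QT'[0,L/T]$, with $|R_V|\approx Q/q$, has about $(Q/q)L/T$ elements, not $L/T$. Here $Q/q=\prod_{\nu\le k,\ \nu\ne j}q_\nu$ is unbounded. When $Q\asymp\eta T$, which the hypotheses allow, this is about $\eta L/q\ge\eta L^{1-(h\theta-1)/p}$. For $h\ge3$ that exceeds $L^\theta$ by a power of $L$. So the loss is not cosmetic: it would also break $|A_1\setminus A_0|\ll L^\theta$ in Lemma~\ref{lem37}.

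The paper does the opposite:
\begin{itemize}
\item It takes $R_V=\{0\}$, i.e.\ one point $c_m=z_0+mG$ per block. Here $z_0\equiv a+pB_0\pmod q$ and $z_0\not\equiv e_\nu\pmod{q_\nu}$ for $\nu\le k$, by the Chinese remainder theorem.
\item The $p$ sets $\widetilde U_i=H_i+R_i+QM^i[0,M-1]$ carry all $Q/q$ residues of the class $p(a+pB_0)$ modulo $q$. The $R_i$ come from Lemma~\ref{lem27} with $d=p$ and $S_i=\{a+pB_0\}$, so $|R_i|\ll(Q/q)^{1/p}$, and $M=\lfloor(\zeta T/Q)^{1/p}\rfloor$.
\item This gives $|U_i|\ll M|R_i|\ll(T/q)^{1/p}$ and $|V|\le|\mathcal M|=O(L/T)$.
\end{itemize}
The reason this works for $p\ge2$ but not for $p=1$ is the following. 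Lemma~\ref{lem24} supplies sets avoiding $e_\nu$ whose $p$-fold sum is all of $\mathbb Z_{q_\nu}$. A single summand avoiding $e_\nu$ cannot cover $\mathbb Z_{q_\nu}$, so $V$ must carry extra residues only when $p=1$; that is the source of the $2^{k-1}$ factor.

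\textbf{Localization.} Drop the opening application of Lemma~\ref{lem26} and the families $b_{i,\nu}$ of Lemma~\ref{lem25}. In the paper, localization comes from each $\widetilde U_i$ being a single translate, independent of $m$. Then $\widetilde V+\sum_i\widetilde U_i=\bigcup_{m\ne m_0}\mathcal F_m$ exactly, and each $\mathcal F_m$ lies in $[b_*+mG-dQ,\,b_*+(m+1)G+dQ]$ by \eqref{eq216}. If the $U_i$ were unions of several translates, the sumset would contain mixed sums $v+\sum_i(F_i+b_{i,\nu_i}Q)$ with different $\nu_i$. These are not confined to the block of $v$ and may meet $J_1$, contradicting \eqref{eq349}.

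The containments $\widetilde U_i\subseteq B_{O_i}(L)$ and $\widetilde V_m\subseteq B_C(L)$, with the $(h+2)Q$ margins needed for Lemma~\ref{lem28}(ii), follow directly from two facts: $H_i\in Q\mathbb Z$ with $|H_i-\alpha O_iL|\le Q$, and $\eps L+S+(2h+3)Q\le\rho L/2$.
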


\begin{proof}
Choose an integer $f$ satisfying
\[
 f\equiv a+pB_0\pmod q.
\]
Since $h=p+1$ and $r\equiv a+(p+1)B_0\pmod q$,
\[
 \begin{aligned}
 (p+1)f
 &\equiv(p+1)(a+pB_0)\\
 &=(p+1)a+p(p+1)B_0\\
 &\equiv a+pr\equiv t\pmod q.
 \end{aligned}
\]
Also, since $q>pB_0$,
\[
 f-a\equiv pB_0\not\equiv0\pmod q,
 \qquad f\not\equiv a=e_j\pmod q.
\]

Suppose first that $p\ge2$. Apply Lemma~\ref{lem27} with
$d=p$ and $S_i=\{f\}$ $(0\le i<p)$. We obtain $R_0,\ldots,R_{p-1}$ satisfying
\[
 \begin{aligned}
 (R_0+\cdots+R_{p-1})\bmod Q
 &=\{u\bmod Q:u\equiv pf\pmod q\},\\
 |R_i|&\ll(Q/q)^{1/p},\\
 x&\equiv f\pmod q\quad(x\in R_i),\\
 R_i\cap(e_\nu+q_\nu\mathbb Z)&=\varnothing
 \quad(1\le\nu\le k,\ 0\le i<p).
 \end{aligned}
\]
Since $f\not\equiv e_j\pmod q$ and
$\mathbb Z_{q_\nu}\setminus\{e_\nu\}\ne\varnothing$ for $\nu\ne j$, the Chinese remainder theorem gives $z_0\in[0,Q-1]$ such that
\[
 z_0\equiv f\pmod q,\qquad
 z_0\not\equiv e_\nu\pmod{q_\nu}\quad(1\le\nu\le k).
\]
Put
\[
 d=p,\qquad c_*=z_0,\qquad R_V=\{0\},\qquad
 M=\left\lfloor(\zeta T/Q)^{1/p}\right\rfloor,
\]
\[
 S=QM^p,\qquad G=S-(p+1)Q,\qquad
 E_i=R_i+QM^i[0,M-1]\quad(0\le i<p).
\]
Since $Q\le\eta T$ and $\eta\le\zeta/2^p$,
\[
 (\zeta T/Q)^{1/p}\ge2,\qquad
 \tfrac12(\zeta T/Q)^{1/p}\le M\le(\zeta T/Q)^{1/p},
\]
and hence
\[
 2^{-p}\zeta T\le S\le\zeta T.
\]
By \eqref{eq215}--\eqref{eq217} in Lemma~\ref{lem25}, with $m=p$,
\[
 \begin{aligned}
 \relax[pQ,S-Q]\cap(pf+q\mathbb Z)
 &\subseteq E_0+\cdots+E_{p-1}
 \subseteq[0,S+(p-1)Q],\\
 |E_i|&=M|R_i|
 \ll(T/Q)^{1/p}(Q/q)^{1/p}
 =(T/q)^{1/p}.
 \end{aligned}
\]
Since $R_i\subseteq[0,Q-1]$ and $M^i(M-1)\le M^p-1$,
\[
 E_i\subseteq[0,Q-1]+Q[0,M^p-1]=[0,S-1].
\]
Taking
\[
 \eta\le\frac{2^{-p-2}\zeta}{p+1},
\]
we obtain
\[
 (p+1)Q\le\frac14S,\qquad
 \frac12S\le G=S-(p+1)Q\le S,\qquad G\asymp T.
\]
Since $c_*=z_0\equiv f\pmod q$,
\[
 t-c_*\equiv(p+1)f-f=pf\pmod q.
\]

Suppose now that $p=1$, so that $h=2$. Then
\[
 f\equiv a+B_0\pmod q,\qquad t\equiv2f\pmod q.
\]
Apply \eqref{eq226} in Lemma~\ref{lem27} with
$d=2$ and $S_0=S_1=\{f\}$, and denote the resulting sets by $R_V,R_U$. Then
\[
 (R_V+R_U)\bmod Q=\{u\bmod Q:u\equiv t\pmod q\},
 \qquad |R_V|=2^{k-1},\qquad |R_U|\le Q/q,
\]
and all elements of $R_V,R_U$ are congruent to $f$ modulo $q$ and satisfy \eqref{eq223}.
Take
\[
 d=2,\qquad c_*=0\in Q\mathbb Z,
\]
and put
\[
 M=\lfloor\zeta T/Q\rfloor,\qquad S=QM,\qquad
 G=S-3Q,\qquad E_0=R_U+Q[0,M-1].
\]
Since
\[
 \zeta T-Q\le S\le\zeta T,\qquad Q\le\eta T,\qquad
 \eta\le\zeta/100,
\]
we have
\[
 (\zeta-4\eta)T\le G=S-3Q\le\zeta T,\qquad G\asymp T.
\]
If $n\in[2Q,S-Q]\cap(t+q\mathbb Z)$, then by the definition of
$(R_V+R_U)\bmod Q$ there exist $r_V\in R_V$ and $r_U\in R_U$ such that
$n\equiv r_V+r_U\pmod Q$. Since $0\le r_V,r_U<Q$,
\[
 v=(n-r_V-r_U)/Q\in\mathbb Z,\qquad
 0\le v\le M-1,
\]
and
\[
 n=r_V+(r_U+Qv)\in R_V+E_0.
\]
Also,
\[
 E_0\subseteq[0,Q-1]+Q[0,M-1]=[0,S-1],
\]
\[
 R_V+E_0\subseteq[0,S+Q],\qquad |E_0|=M|R_U|.
\]
Since $c_*\in Q\mathbb Z$, we have $t-c_*\equiv t\pmod q$.

In both cases,
\[
 G=S-(d+1)Q\in Q\mathbb Z,\qquad
 G\asymp T,\qquad S\le\zeta T,
\]
and
\begin{equation}\label{eq357}
 [dQ,S-Q]\cap(t-c_*+q\mathbb Z)
 \subseteq R_V+\sum_{i=0}^{p-1}E_i
 \subseteq[0,S+(d-1)Q].
\end{equation}
For each $0\le i<p$, choose
\[
 H_i\in Q\mathbb Z,\qquad |H_i-\alpha O_iL|\le Q,
\]
and put
\[
 H_*'=\sum_{i=0}^{p-1}H_i,\qquad
 \widetilde U_i=H_i+E_i,\qquad
 c_m=c_*+mG,\qquad \widetilde V_m=c_m+R_V,
\]
\[
 b_*=c_*+H_*'+dQ,\qquad
 I_m=[c_m+H_*'+dQ,c_m+H_*'+S-Q]
     =[b_*+mG,b_*+(m+1)G].
\]
If $n\in I_m\cap(t+q\mathbb Z)$, put
\[
 u=n-c_m-H_*'.
\]
Since $G,H_*'\in Q\mathbb Z$,
\[
 dQ\le u\le S-Q,\qquad u\equiv t-c_*\pmod q.
\]
Thus, by \eqref{eq357},
\[
 u\in R_V+\sum_iE_i,\qquad
 n\in\widetilde V_m+\sum_i\widetilde U_i.
\]
Let
\[
 \mathcal F_m=\widetilde V_m+\sum_{i=0}^{p-1}\widetilde U_i,
 \qquad \mathcal M=\{m\in\mathbb Z:I_m\cap J\ne\varnothing\}.
\]
By the upper bound in \eqref{eq357} and $G=S-(d+1)Q$,
\[
 I_m\cap(t+q\mathbb Z)\subseteq\mathcal F_m
 \subseteq[c_m+H_*',c_m+H_*'+S+(d-1)Q]
 =[b_*+mG-dQ,b_*+(m+1)G+dQ].
\]

Since $S\le\zeta T$, $Q\le\eta T$, and $T\le\sigma L$, the already fixed bounds in \eqref{eq39} give
\[
 \eps+\zeta\sigma+(2h+3)\eta\sigma
 \le\eps+\sigma/2+\rho/8<\rho/2,
\]
and hence
\[
 \eps L+S+(2h+3)Q\le\rho L/2.
\]
Since $E_i\subseteq[0,S-1]$,
\[
 |x-\alpha O_iL|
 \le S+Q\le\rho L-(h+3)Q
 \qquad(x\in\widetilde U_i).
\]
Since
\[
 C+\sum_{i=0}^{p-1}O_i=Z,\qquad \alpha Z=\omega,
\]
we have
\[
 \alpha CL+\sum_{i=0}^{p-1}\alpha O_iL=\omega L.
\]
If $m\in\mathcal M$, choose $n\in I_m\cap J$ and write
\[
 n=c_m+H_*'+u,\qquad dQ\le u\le S-Q.
\]
For $x\in\widetilde V_m$, since $R_V\subseteq[0,Q-1]$,
\[
 \begin{aligned}
 |x-\alpha CL|
 &\le |n-\omega L|
 +\left|H_*'-\sum_{i=0}^{p-1}\alpha O_iL\right|
 +|u|+|x-c_m|\\
 &\le\eps L+pQ+S+Q
 =\eps L+hQ+S.
 \end{aligned}
\]
Therefore
\[
 |x-\alpha CL|
 \le\eps L+hQ+S\le\rho L-(h+3)Q
 \qquad(x\in\widetilde V_m).
\]
For
\[
 b_D^-:=\left\lceil\alpha DL-\rho L\right\rceil,
 \qquad
 b_D^+:=\left\lfloor\alpha DL+\rho L\right\rfloor
 \qquad(D\in\{C,O_0,\ldots,O_{p-1}\}),
\]
we have
\[
 B_D(L)\cap\mathbb Z=[b_D^-,b_D^+]\cap\mathbb Z.
\]
By the preceding estimates and $Q\ge1$,
\[
 \begin{aligned}
 x-b_{O_i}^-,\ b_{O_i}^+-x
 &\ge(h+3)Q-1\ge(h+2)Q
 &&(x\in\widetilde U_i,\ 0\le i<p),\\
 x-b_C^-,\ b_C^+-x
 &\ge(h+3)Q-1\ge(h+2)Q
 &&(x\in\widetilde V_m,\ m\in\mathcal M).
 \end{aligned}
\]
Hence
\[
 \widetilde U_i\subseteq
 [b_{O_i}^-+(h+2)Q,b_{O_i}^+-(h+2)Q],
 \qquad 0\le i<p,
\]
and
\[
 \widetilde V_m\subseteq
 [b_C^-+(h+2)Q,b_C^+-(h+2)Q],
 \qquad m\in\mathcal M.
\]

Since
\[
 |K|=\frac\eps2L+O(1),\qquad
 G\le\zeta T\le\zeta\sigma L\le\frac\eps{24}L,
\]
for all sufficiently large $L$,
\[
 6G\le\frac\eps4L<|K|,
 \qquad K\subseteq J.
\]
Apply Lemma~\ref{lem29} with
\[
 \mathcal T=t+q\mathbb Z,
 \qquad b_*=c_*+H_*'+dQ,
 \qquad G=S-(d+1)Q,
 \qquad
 \mathcal F_m=\widetilde V_m+\sum_{i=0}^{p-1}\widetilde U_i.
\]
Let $m_0\in\mathcal M$ be the resulting integer, and put
\[
 A_*=b_*+m_0G,\qquad J_0=I_{m_0}=[A_*,A_*+G]\subseteq K,
\]
\[
 \widetilde V=\bigcup_{m\in\mathcal M\setminus\{m_0\}}\widetilde V_m,
 \qquad
 \mathcal F=\bigcup_{m\in\mathcal M\setminus\{m_0\}}\mathcal F_m
 =\widetilde V+\sum_{i=0}^{p-1}\widetilde U_i.
\]
By \eqref{eq231} and $G\asymp T$,
\[
 |\mathcal M|\le2+|J|/G=O(L/T).
\]

Since $G,H_i\in Q\mathbb Z$,
\[
 \widetilde U_i\bmod Q=
 \begin{cases}
 R_i,&p\ge2,\\
 R_U,&p=1,\ i=0,
 \end{cases}
 \qquad
 \widetilde V_m\bmod Q=
 \begin{cases}
 \{z_0\},&p\ge2,\\
 R_V,&p=1.
 \end{cases}
\]
Therefore every $x\in\widetilde V\cup\bigcup_i\widetilde U_i$ satisfies
\[
 x\equiv f\pmod q,
 \qquad
 x\not\equiv e_\nu\pmod{q_\nu}\quad(1\le\nu\le k).
\]
Apply Lemma~\ref{lem28}\textup{(ii)}, with $m=h$, to
$\widetilde V$ and the sets $\widetilde U_i$ $(0\le i<p)$, and denote the resulting sets by $V$ and $U_i$.
Then
\[
 V\subseteq B_C(L),\qquad U_i\subseteq B_{O_i}(L),
\]
and every $x\in V\cup\bigcup_iU_i$ satisfies \eqref{eq350} and
\[
 x\equiv f\equiv a+pB_0\pmod q.
\]
Thus \eqref{eq351} holds when $p\ge2$, and \eqref{eq353} holds when $p=1$.
Put
\[
 \mathcal H=V+\sum_{i=0}^{p-1}U_i,\qquad\Delta=h(h+1)Q.
\]
By \eqref{eq229},
\[
 \mathcal F
 =\widetilde V+\sum_{i=0}^{p-1}\widetilde U_i
 \subseteq V+\sum_{i=0}^{p-1}U_i
 =\mathcal H.
\]
Moreover, for any $y\in\mathcal H$, there exists $y_0\in\mathcal F$ such that
\[
 |y-y_0|\le h(h+1)Q=\Delta.
\]
Hence
\[
 \mathcal F\subseteq\mathcal H
 \subseteq\mathcal F+[-\Delta,\Delta].
\]
Lemma~\ref{lem29}, with
\[
 J_1=[A_*+(d+1)Q+\Delta,A_*+G-(d+1)Q-\Delta],
\]
then gives
\[
 J_1\subseteq J_0\subseteq K,\qquad
 (J\setminus J_0)\cap(t+q\mathbb Z)\subseteq\mathcal H,
 \qquad J_1\cap\mathcal H=\varnothing.
\]

Put
\[
 D_h=3(d+1)+2h(h+1).
\]
Since $G=S-(d+1)Q$,
\[
 G-2(d+1)Q-2\Delta=S-D_hQ.
\]
If $p\ge2$, then $d=p$ and
\[
 D_h=2h^2+5h<(2h+3)(h+1).
\]
Thus \eqref{eq39} already gives
\[
 \eta\le\frac{2^{-p-1}\zeta}{D_h},
\]
and therefore
\[
 S-D_hQ\ge(2^{-p}\zeta-D_h\eta)T
 \ge2^{-p-1}\zeta T>0.
\]
If $p=1$, then $d=2$ and $h=2$, so
\[
 \Delta=6Q,\qquad (d+1)Q+\Delta=9Q,\qquad
 D_h=3\cdot3+2\cdot2\cdot3=21,
\]
and
\[
 J_1=[A_*+9Q,A_*+G-9Q].
\]
Since $\eta\le\zeta/100$,
\[
 \begin{aligned}
 S-D_hQ
 &=S-21Q\\
 &\ge\zeta T-22Q\\
 &\ge(\zeta-22\eta)T\\
 &\ge\frac\zeta2T>0.
 \end{aligned}
\]
Thus in both cases $G>2(d+1)Q+2\Delta$. The two estimates above, together with
\[
 |J_1|=S-D_hQ+1,
\]
give \eqref{eq356}. In particular,
\[
 S-D_hQ\ge2^{-p-1}\zeta T.
\]
Hence, by \eqref{eq234},
\[
 |J_0|=G+1\asymp T,\qquad
 |J_1|=S-D_hQ+1\gg T.
\]
Since
\[
 |J_1|\le|J_0|=G+1\le\zeta T+1,
\]
we also have $|J_1|\asymp T$.

If $p\ge2$, then $|R_V|=1$. By \eqref{eq229} and $|E_i|\ll(T/q)^{1/p}$,
\[
 \sum_{i=0}^{p-1}|U_i|
 \ll\sum_{i=0}^{p-1}|\widetilde U_i|
 =\sum_{i=0}^{p-1}|E_i|=O(T^{1/p}),
\]
and
\[
 |V|\ll|\widetilde V|
 \le|\mathcal M||R_V|=O(L/T).
\]
If $p=1$, then $|R_V|=2^{k-1}$. Similarly,
\[
 |U_0|\ll|E_0|=M|R_U|
 \le(\zeta T/Q)(Q/q)=O(T/q),
\]
and
\[
 |V|\ll|\mathcal M||R_V|=O(2^kL/T).
\]
In this case Lemma~\ref{lem21} gives $k=O(\sqrt{\log L})$, and hence
\[
 2^k=L^{o(1)}.
\]
Since $T\asymp L^\theta$ and $\theta>1/2$,
\[
 2^kL/T=L^{1-\theta+o(1)}=O(L^\theta),
 \qquad |V|=O(L^\theta).
\]
\end{proof}

\begin{lemma}\label{lem36}
Assume that \eqref{eq310}, \eqref{eq21}, \eqref{eq22}, and
\eqref{eq311} hold, and that $p=1$ (hence $h=2$).
Let $\mathcal E(L)$ be the set given by Lemma~\ref{lem34}.
Suppose further that
\[
 D_{\rm o}\subseteq[\delta L,\rho L],\qquad
 H_0^{\rm o}\subseteq B_H(L),
\]
and that every $x\in D_{\rm o}\cup H_0^{\rm o}$ satisfies
\[
 \begin{aligned}
 x&\not\equiv e_\nu\pmod{q_\nu}\quad(1\le\nu\le k),\\
 x&\not\equiv e_{k+1}\pmod{q_{k+1}}
 \quad(k<N,\ q_{k+1}\le2L),
 \end{aligned}
\]
as well as
\[
 x\equiv r\pmod q\qquad(x\in H_0^{\rm o}).
\]
Let
\[
 J_1\subseteq J_0\subseteq K,\qquad
 V\subseteq B_C(L),\qquad U_0\subseteq B_{O_0}(L)
\]
be the sets supplied by Lemma~\ref{lem35} for $p=1$; thus they satisfy
\eqref{eq347}--\eqref{eq350} and
\eqref{eq353}--\eqref{eq354}.
Put
\[
 I=J_0,\qquad I'=J_1.
\]
Then there exist
\[
 D_{\rm r}\subseteq B_H(L),\qquad
 X\subseteq B_C(L),\qquad
 Y\subseteq B_{O_0}(L)
\]
such that
\begin{equation}\label{eq358}
 |D_{\rm r}|+|X|+|Y|=O(T).
\end{equation}
Every $z\in D_{\rm r}\cup X\cup Y$ satisfies
\begin{equation}\label{eq359}
\begin{aligned}
 z&\not\equiv e_i\pmod{q_i}\quad(1\le i\le k),\\
 z&\not\equiv e_{k+1}\pmod{q_{k+1}}
 \quad(k<N,\ q_{k+1}\le2L).
\end{aligned}
\end{equation}
Moreover,
\begin{equation}\label{eq360}
 I\cap(t+q\mathbb Z)\subseteq(a+D_{\rm r})\cup(X+Y).
\end{equation}
Let
\[
 \mathcal A^{(1)}:=A_0\cup\mathcal E(L)\cup D_{\rm o}\cup H_0^{\rm o}
 \cup V\cup U_0\cup D_{\rm r}\cup X\cup Y.
\]
Then there exists $\mathcal P_a(L)\subseteq I'$ such that
\begin{align}
 |\mathcal P_a(L)|&\gg T/q,\label{eq361}\\
 \mathcal P_a(L)&\subseteq
 2\mathcal A^{(1)}\setminus2(\mathcal A^{(1)}\setminus\{a\}).\label{eq362}
\end{align}
\end{lemma}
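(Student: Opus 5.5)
The plan is to build $D_{\rm r}$, $X$, $Y$ by splitting the progression $I\cap(t+q\mathbb Z)$ into ``good'' and ``bad'' elements. Recall that $p=1$, $h=2$, so $H=Z$ and $\alpha HL=\omega L$. By \eqref{eq313}, $t\equiv a+r\pmod q$, and $r-a\equiv 2B_0\not\equiv 0$, so $r\not\equiv e_j\pmod q$. Call $n\in I\cap(t+q\mathbb Z)$ \emph{good} if $n-a$ satisfies \eqref{eq24}, and \emph{bad} otherwise. Put $D_{\rm r}=\{n-a: n \text{ good}\}$. Since $a\le\rho T/10$ and $I\subseteq K$, every $n-a$ lies within $\eps L/4+\rho T/10$ of $\omega L=\alpha HL$, hence $D_{\rm r}\subseteq B_H(L)$. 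Also $|D_{\rm r}|\le|I|=O(T)$, and every element of $D_{\rm r}$ is $\equiv r\pmod q$.

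For the bad $n$ I will use Lemma~\ref{lem28}\textup{(iii)}. Take a \emph{short} integer interval $K_0\subseteq B_C(L)$ of length about $3Q+1$ around $\lfloor\alpha CL\rfloor$, and let $K_1=B_{O_0}(L)\cap\mathbb Z$, which is long. Since $\alpha CL+\alpha O_0L=\omega L$ and $|n-\omega L|\le\eps L$, we get $K_0\subseteq n-K_1$, so $|K_0\cap(n-K_1)|\ge 3Q+1$. By the Chinese remainder theorem, fix once and for all a residue $\xi\bmod Q$ with $\xi\equiv f:=a+B_0\pmod q$ and $\xi\not\equiv e_\nu\pmod{q_\nu}$ for $\nu\le k$.

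For each bad $n$, let $\upsilon\equiv n-\xi\pmod Q$. Then $\upsilon\equiv t-f\equiv f\pmod q$. However, $\upsilon$ may be $\equiv e_\nu\pmod{q_\nu}$ for some $\nu\ne j$, and this is the first technical point. I would handle it by letting $\xi$ depend only on the bad residue class of $n$ modulo $Q$. Alternatively, I would allow two choices of $\xi$ per prime and use that $q_\nu\ge5$, as in Lemma~\ref{lem27}. Lemma~\ref{lem28}\textup{(iii)} then gives $x_n\in K_0$ and $y_n\in K_1$ with $x_n+y_n=n$, both satisfying \eqref{eq24}. Put $X=\{x_n\}$ and $Y=\{y_n\}$. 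Then $|X|+|Y|=O(T)$, and \eqref{eq359} and \eqref{eq360} hold.

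Next I control the stray sums in $X+Y$. Since all $x_n\in K_0$ with $|K_0|\le 3Q+1$ and $x_n\equiv\xi\pmod Q$, any cross sum satisfies
\[
 x_n+y_{n'}=n'+(x_n-x_{n'})\in n'+\{0,\pm Q,\pm 2Q,\pm3Q\}.
\]
Hence every element of $X+Y$ lies within $3Q$ of a bad $n'$ and is congruent to it modulo $Q$. Define $\mathcal P_a(L)$ as the set of good $n\in I'\cap(t+q\mathbb Z)$ such that $n-a\pm sQ\not\equiv e_{k+1}\pmod{q_{k+1}}$ for $|s|\le3$ (this condition is vacuous if $q_{k+1}>2L$). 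Badness at the primes $\nu\le k$ depends only on $n\bmod Q$, so for such $n$ we get $n\notin X+Y$.

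For the count \eqref{eq361}, observe that $|I'|\gg T\ge Q/\eta$. By the CRT, the proportion of the classes $n\equiv t\pmod q$ modulo $Q$ that survive the conditions at $\nu\ne j$, $\nu\le k$, is at least $\prod(1-1/q_\nu)\ge 1-2^{-10}$, by Lemma~\ref{lem21}. This gives $\gg T/q$ good $n$. The $q_{k+1}$-conditions remove at most $7(T/q_{k+1}+1)$ further elements, and this is negligible because $q_{k+1}\gg Q^4\ge q^4$ and $T/q\ge L^{\theta-(h\theta-1)/p}\to\infty$.

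For \eqref{eq362}, $n=a+(n-a)\in2\mathcal A^{(1)}$. Suppose $n=x+y$ with $x,y\in\mathcal A^{(1)}\setminus\{a\}$. If either summand lies in $\mathcal E(L)$, apply \eqref{eq325} with $\mathcal C=\mathcal A^{(1)}\setminus\mathcal E(L)$. To check \eqref{eq323} and \eqref{eq324}: $A_0\cup D_{\rm o}\subseteq[0,\tau L]$. The elements of $H_0^{\rm o}$ and $D_{\rm r}$ are $\equiv r$, so their $x-a\equiv 2B_0\in\mathcal R_q$. The elements of $V$, $U_0$, $X$, $Y$ are $\equiv a+B_0$, and $B_0\in\mathcal R_q$ by Lemma~\ref{lem33}. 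Otherwise both summands lie in $\mathcal C$, and Lemma~\ref{lem32} leaves two types. Type $\{0,H\}$ means $y\in D_{\rm r}\cup H_0^{\rm o}$, so $y\equiv r$ and $x\equiv n-r\equiv a=e_j\pmod q$. Now $D_{\rm o}$ avoids $e_j$ modulo $q_j$, and by \eqref{eq22} the only element of $A_0$ in this class is $a$ itself, a contradiction. Type $\{C,O_0\}$ means $n\in(V\cup X)+(U_0\cup Y)$. Here $V+U_0$ misses $J_1=I'$ by \eqref{eq349}, and $X+Y$ misses $\mathcal P_a(L)$ by the choice above. The remaining mixed sums $V+Y$ and $X+U_0$ are the main obstacle. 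I would handle them by placing $K_0$ and the residue $\xi$ so that these sums stay within $O(Q)$ of $V$- and $U_0$-translates that Lemma~\ref{lem29} keeps away from $J_1$. If that fails, I would shrink $\mathcal P_a(L)$ to a fixed subinterval of $I'$ at distance $\gg T$ from where such sums can land, which keeps $|\mathcal P_a(L)|\gg T/q$.
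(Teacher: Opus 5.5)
There is a genuine gap at the point you call the first technical point, and it is the core of the lemma. Your exclusion of $X+Y$ from $\mathcal P_a(L)$ needs a single fixed $\xi$, so that $x_n+y_{n'}\equiv n'\pmod Q$ for every cross pair. With $\xi$ fixed, however, $\upsilon_n\equiv n-\xi$ is $\equiv e_\nu\pmod{q_\nu}$ whenever $n\equiv\xi+e_\nu\pmod{q_\nu}$. Elements that are bad at a different prime occur in every such class, so Lemma~\ref{lem28}(iii) cannot be applied to them.

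Both remedies you mention make $\xi_n$ depend on $n$. Then $x_n+y_{n'}\equiv n'+\xi_n-\xi_{n'}\pmod Q$ can be a good class, and the cross-sum argument collapses. A family of at most $k-2$ fixed residues $\xi^{(\tau)}$ does not suffice either: some bad $n$ satisfies $n\equiv\xi^{(\tau)}+e_{\nu_\tau}\pmod{q_{\nu_\tau}}$ for every $\tau$, at distinct primes $\nu_\tau$, and $k$ is unbounded.

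The missing idea is the paper's gadget:
\begin{itemize}
\item For $i\le k$, $i\ne j$, put $c_i\equiv a-e_i$, $H_i=\{c_i,\rho_i\}$ and $B_i=H_i-\beta_i$, chosen so that $0\notin B_i$.
\item Call $n$ good if $n-2e_i\notin H_i\pmod{q_i}$ for all such $i$. Then $n-a\not\equiv e_i$ because $c_i\in H_i$, and the good density is still $\prod(1-2/q_i)>1/2$.
\item For bad $n$, impose $\upsilon_n-e_i\in B_i$ at every $i\ne j$, and $\xi_n-e_{i(n)}\equiv\beta_{i(n)}$ at one prime $i(n)$ where $n$ is bad.
\end{itemize}
Then $\xi_n+\upsilon_m-2e_{i(n)}\in\beta_{i(n)}+B_{i(n)}=H_{i(n)}$ for all $n,m$. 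So every cross sum is bad, however the residues vary at the other primes.

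The mixed sums $V+Y$ and $X+U_0$ are also left open, and neither fix you suggest works. With $K_0$ near $\alpha CL$, the set $Y\subseteq J_0-K_0$ spans an interval of length about $G$. Meanwhile $V$ has points within $3Q$ of $mG+R_V$ for every $m\in\mathcal M\setminus\{m_0\}$ (notation of the proof of Lemma~\ref{lem35}). The index with $mG\approx\alpha CL$ is not $m_0$, since $m_0G=\alpha CL-\eps L/4+O(G)$ and $\eps L/4\ge6G$. Hence $V+Y$ has elements spread throughout $J_1$. They are all $\equiv t\pmod q$, because $V\equiv Y\equiv a+B_0$, and their residues mod $Q$ are not under your control. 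Shrinking $\mathcal P_a(L)$ to a subinterval therefore cannot avoid them. Moving $K_0$ onto the deleted block $m_0G+R_V$ tames $V+Y$, but then $X+U_0$ covers most of $J_1\cap(t+q\mathbb Z)$, since $\xi+R_U$ contains most classes $\equiv t\pmod q$.

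The paper separates these sums modulo $q$ instead. It takes $X_c,Y_c\equiv a+B_0\pm1$ and $X_t,Y_t\equiv a+B_0\pm2$, against $V,U_0\equiv a+B_0$. Only matched pairs can then sum to $t\pmod q$, all offsets stay inside $\mathcal R_q$, and only the stated properties of $V$ and $U_0$ are used. A positional fix does exist: placing $K_0$ near $\alpha CL+\rho L/2$ pushes both mixed sums outside $K$. It relies, however, on where the proof of Lemma~\ref{lem35} places $V$ and $U_0$, not on \eqref{eq347}--\eqref{eq354}.

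A minor point: your claim $T/q\to\infty$ is false at $\theta=1$, but $T/q\ge1/\eta$ is enough for your count.
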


\begin{proof}
Since $p=1$ and $h=2$, we retain the notation introduced above:
\[
 t\equiv2a+2B_0\pmod q,
 \qquad
 r\equiv a+2B_0\pmod q,
 \qquad
 q=q_j,
 \qquad
 e_j=a.
\]
By \eqref{eq347} and \eqref{eq349},
\[
 I'=J_1\subseteq I=J_0\subseteq K,
 \qquad
 |I|\asymp T,
 \qquad
 |I'|\asymp T,
\]
and $V\subseteq B_C(L)$, $U_0\subseteq B_{O_0}(L)$ satisfy
\[
 x\equiv a+B_0\pmod q\quad(x\in V\cup U_0),
 \qquad I'\cap(V+U_0)=\varnothing.
\]

For $i\le k$, $i\ne j$, put
\[
 c_i\equiv a-e_i\pmod{q_i}.
\]
Since $q_i>q_0>6$, choose successively
\[
 \rho_i\in\mathbb Z_{q_i}\setminus\{0,c_i\},
 \qquad
 \beta_i\in\mathbb Z_{q_i}\setminus\{0,c_i,\rho_i\}.
\]
Put
\[
 H_i=\{c_i,\rho_i\},
 \qquad
 B_i=H_i-\beta_i
 =\{c_i-\beta_i,\rho_i-\beta_i\}.
\]
Since $\beta_i\notin\{c_i,\rho_i\}$,
\[
 0\notin B_i,
 \qquad
 |H_i|=|B_i|=2.
\]
Define
\[
 \widehat{\mathcal G}=
 \left\{n\in t+q\mathbb Z:
 n-2e_i\notin H_i\pmod{q_i}
 \quad(i\le k,\ i\ne j)\right\},
 \qquad \mathcal G=I\cap\widehat{\mathcal G}.
\]
If
\[
 n\in(I\cap(t+q\mathbb Z))\setminus\mathcal G,
\]
then by the definition of $\widehat{\mathcal G}$ there exists $i\le k$, $i\ne j$, such that
\[
 n-2e_i\in H_i\pmod{q_i}.
\]

For every $n\in(I\cap(t+q\mathbb Z))\setminus\mathcal G$, fix such an index $i=i(n)$ and choose
\[
 h_i\in H_i,\qquad n-2e_i\equiv h_i\pmod{q_i}.
\]
Modulo $q_i$, impose
\[
 x-e_i\equiv\beta_i,\qquad
 y-e_i\equiv h_i-\beta_i.
\]
By the choice of $\beta_i$,
\[
 \beta_i\ne0,\qquad
 h_i-\beta_i\in B_i\subseteq\mathbb Z_{q_i}\setminus\{0\}.
\]
Thus $x,y\not\equiv e_i\pmod{q_i}$, and
\[
 \begin{aligned}
 x+y-2e_i
 &\equiv\beta_i+(h_i-\beta_i)
 \equiv h_i
 \equiv n-2e_i\pmod{q_i}.
 \end{aligned}
\]

For $\nu\le k$, $\nu\ne i(n),j$, put
\[
 s_\nu\equiv n-2e_\nu\pmod{q_\nu}.
\]
Since
\[
 |B_\nu|=2,\qquad
 |\{b\in B_\nu:s_\nu-b=0\}|\le1,
\]
we may choose $b_\nu\in B_\nu\setminus\{s_\nu\}$ and impose
\[
 y-e_\nu\equiv b_\nu,\qquad
 x-e_\nu\equiv s_\nu-b_\nu\pmod{q_\nu},
 \qquad s_\nu-b_\nu\ne0.
\]
Since $0\notin B_\nu$,
\[
 x\not\equiv e_\nu\pmod{q_\nu},
 \qquad
 y\not\equiv e_\nu\pmod{q_\nu}.
\]
Also,
\[
 \begin{aligned}
 x+y-2e_\nu
 &\equiv(s_\nu-b_\nu)+b_\nu\\
 &\equiv s_\nu\\
 &\equiv n-2e_\nu
 \pmod{q_\nu}.
 \end{aligned}
\]
Hence, for $1\le\nu\le k$, $\nu\ne j$,
\[
 x+y\equiv n\pmod{q_\nu},
 \qquad
 x,y\not\equiv e_\nu\pmod{q_\nu}.
\]

Modulo $q=q_j$, impose
\[
 x\equiv a+B_0+1\pmod q,
 \qquad
 y\equiv a+B_0-1\pmod q.
\]
Since $q>B_0+2$,
\[
 a+B_0\pm1\not\equiv a\pmod q.
\]
Moreover,
\[
 \begin{aligned}
 x+y
 &\equiv(a+B_0+1)+(a+B_0-1)\\
 &\equiv2a+2B_0\\
 &\equiv t\\
 &\equiv n
 \pmod q.
 \end{aligned}
\]
By the Chinese remainder theorem, there exists a pair of residue classes modulo $Q$,
\[
 (\xi_n,\upsilon_n)\pmod Q,
\]
such that
\[
 \xi_n+\upsilon_n\equiv n\pmod Q,
\]
and
\[
 \xi_n,\upsilon_n\not\equiv e_\nu\pmod{q_\nu}
 \qquad(1\le\nu\le k).
\]

Define
\[
 \mathcal R_c^{(X)}
 =\{\xi_n:n\in(I\cap(t+q\mathbb Z))\setminus\mathcal G\},
\]
\[
 \mathcal R_c^{(Y)}
 =\{\upsilon_n:n\in(I\cap(t+q\mathbb Z))\setminus\mathcal G\}.
\]
By the choice of $i(n)$,
\[
 \xi_n-e_{i(n)}\equiv\beta_{i(n)}\pmod{q_{i(n)}}.
\]
By the choices made for the $y$-coordinate, for every
$\upsilon_m\in\mathcal R_c^{(Y)}$ and every $i\le k$, $i\ne j$,
\[
 \upsilon_m-e_i\in B_i\pmod{q_i}.
\]
For arbitrary $n,m$, let $i=i(n)$. Then
\[
 \xi_n+\upsilon_m-2e_i
 \in\beta_i+B_i
 =H_i
 \pmod{q_i}.
\]
Hence
\[
 \xi_n+\upsilon_m\notin\widehat{\mathcal G}.
\]

Put
\[
 K_d=[\alpha dL-2\rho L/3,\alpha dL+2\rho L/3]
 \qquad(d=C,O_0,H).
\]
By definition,
\[
 K_d\subseteq B_d(L).
\]
Since $p=1$, we have $\alpha H=\omega$ and $\alpha C+\alpha O_0=\omega$.
For $n\in I\subseteq K$, using $a\le\rho T/10$ and $T\le\sigma L$,
\[
 |(n-a)-\alpha HL|
 \le|n-\omega L|+a
 \le\left(\frac\eps4+\frac{\rho\sigma}{10}\right)L
 <\frac{2\rho L}{3}.
\]
Thus $I-a\subseteq K_H$.
For $n\in I\subseteq K$, since $\alpha C+\alpha O_0=\omega$,
\[
 |K_C\cap(n-K_{O_0})|
 \ge\left(\frac{4\rho}{3}-\frac\eps4\right)L-2.
\]
Since $\eps=\rho/100$, $Q\le\eta T\le\eta\sigma L$, and
\[
 \eta\sigma\le\frac{\rho}{8(2h+3)}=\frac{\rho}{56}
 \qquad(h=2),
\]
we have
\[
 \begin{aligned}
 |K_C\cap(n-K_{O_0})|-(3Q+1)
 &\ge\left(\frac{4\rho}{3}-\frac{\rho}{400}-3\eta\sigma\right)L-3\\
 &\ge\left(\frac43-\frac1{400}-\frac3{56}\right)\rho L-3>0
 \end{aligned}
\]
for all sufficiently large $L$. Thus
\[
 |K_C\cap(n-K_{O_0})|\ge3Q+1.
\]
For $n\in(I\cap(t+q\mathbb Z))\setminus\mathcal G$, we have
\[
 |K_C\cap(n-K_{O_0})|\ge3Q+1,
 \qquad \xi_n+\upsilon_n\equiv n\pmod Q,
\]
and
\[
 \xi_n,\upsilon_n\not\equiv e_\nu\pmod{q_\nu}
 \qquad(1\le\nu\le k).
\]
Thus Lemma~\ref{lem28}\textup{(iii)}, with
$K_0=K_C$ and $K_1=K_{O_0}$, gives
\[
 x_n+y_n=n,\qquad x_n\in K_C,\quad y_n\in K_{O_0},
 \qquad x_n\equiv\xi_n,\quad y_n\equiv\upsilon_n\pmod Q.
\]
If $k<N$ and $q_{k+1}\le2L$, it also gives
\[
 x_n,y_n\not\equiv e_{k+1}\pmod{q_{k+1}}.
\]
Define
\[
 X_c=\{x_n:n\in(I\cap(t+q\mathbb Z))\setminus\mathcal G\},
\]
\[
 Y_c=\{y_n:n\in(I\cap(t+q\mathbb Z))\setminus\mathcal G\}.
\]
Since $x_n+y_n=n$,
\[
 (I\cap(t+q\mathbb Z))\setminus\mathcal G
 \subseteq X_c+Y_c.
\]
Since $x_n\equiv\xi_n$ and $y_m\equiv\upsilon_m\pmod Q$, for arbitrary
$x_n\in X_c$ and $y_m\in Y_c$, with $i=i(n)$,
\[
 x_n+y_m-2e_i
 \equiv\xi_n+\upsilon_m-2e_i
 \in H_i\pmod{q_i}.
\]
Also $x_n+y_m\equiv2a+2B_0\equiv t\pmod q$. Hence
\[
 X_c+Y_c
 \subseteq(t+q\mathbb Z)\setminus\widehat{\mathcal G},
 \qquad (X_c+Y_c)\cap\mathcal G=\varnothing.
\]

For $n\in\mathcal G$, put
\[
 d=n-a.
\]
Since $n\equiv t\pmod q$,
\[
 d\equiv t-a\equiv a+2B_0\equiv r\pmod q.
\]
Since $q>2B_0$,
\[
 r-a\equiv2B_0\not\equiv0\pmod q,
 \qquad d\not\equiv a=e_j\pmod q.
\]
If, for some $i\le k$, $i\ne j$,
\[
 d\equiv e_i\pmod{q_i},
\]
then, since $d=n-a$,
\[
 \begin{aligned}
 n-2e_i
 &\equiv a+d-2e_i\\
 &\equiv a-e_i\\
 &\equiv c_i
 \pmod{q_i}.
 \end{aligned}
\]
This contradicts $c_i\in H_i$ and $n\in\mathcal G$. Therefore
\[
 d\not\equiv e_i\pmod{q_i}
 \qquad(1\le i\le k).
\]

Define
\[
 \mathcal G_0=
 \begin{cases}
 \{n\in\mathcal G:n-a\not\equiv e_{k+1}\pmod{q_{k+1}}\},
 &k<N,\ q_{k+1}\le2L,\\
 \mathcal G,&\text{otherwise}.
 \end{cases}
\]
and put
\[
 D_{\rm r}=\{n-a:n\in\mathcal G_0\}.
\]
Since $I-a\subseteq K_H\subseteq B_H(L)$,
\[
 D_{\rm r}\subseteq B_H(L).
\]
By the definition of $\mathcal G_0$, every $d\in D_{\rm r}$ satisfies
\[
 \begin{cases}
 d\not\equiv e_i\pmod{q_i},&1\le i\le k,\\
 d\not\equiv e_{k+1}\pmod{q_{k+1}},&k<N,\ q_{k+1}\le2L.
 \end{cases}
\]
Moreover,
\[
 \mathcal G_0\subseteq a+D_{\rm r}.
\]

Let $n\in\mathcal G\setminus\mathcal G_0$. Then, by the definition of $\mathcal G_0$,
\[
 k<N,
 \qquad
 \ell=q_{k+1}\le2L,
 \qquad
 n-a\equiv e_{k+1}\pmod\ell.
\]
Modulo $q$, impose
\[
 x\equiv a+B_0+2\pmod q,
 \qquad
 y\equiv a+B_0-2\pmod q.
\]
Then
\[
 x+y\equiv2a+2B_0\equiv t\equiv n\pmod q,
 \qquad x,y\not\equiv a=e_j\pmod q.
\]
For $i\le k$, $i\ne j$, in order to have
\[
 x\not\equiv e_i\pmod{q_i},
 \qquad
 n-x\not\equiv e_i\pmod{q_i},
\]
observe that
\[
 |\{e_i,n-e_i\}\bmod q_i|\le2<q_i.
\]
Hence we may choose
\[
 u_i\in\mathbb Z_{q_i}\setminus\{e_i,n-e_i\},
\]
and impose
\[
 x\equiv u_i\pmod{q_i},
 \qquad
 y\equiv n-u_i\pmod{q_i}.
\]
By the Chinese remainder theorem, there exist residue classes modulo $Q$,
\[
 (\xi_n',\upsilon_n')\pmod Q,
\]
such that
\[
 \xi_n'+\upsilon_n'\equiv n\pmod Q,
\]
and
\[
 \xi_n',\upsilon_n'\not\equiv e_i\pmod{q_i}
 \qquad(1\le i\le k).
\]

Since $\xi_n'+\upsilon_n'\equiv n\pmod Q$ and
$|K_C\cap(n-K_{O_0})|\ge3Q+1$, Lemma~\ref{lem28}\textup{(iii)}, with
$K_0=K_C$ and $K_1=K_{O_0}$, gives
\[
 x_n'+y_n'=n,\qquad x_n'\in K_C,\quad y_n'\in K_{O_0},
 \qquad x_n'\equiv\xi_n',\quad y_n'\equiv\upsilon_n'\pmod Q,
\]
and both $x_n'$ and $y_n'$ satisfy \eqref{eq24}.
Define
\[
 X_t=\{x_n':n\in\mathcal G\setminus\mathcal G_0\},
 \qquad
 Y_t=\{y_n':n\in\mathcal G\setminus\mathcal G_0\}.
\]
Since $x_n'+y_n'=n$,
\[
 \mathcal G\setminus\mathcal G_0\subseteq X_t+Y_t.
\]

Put
\[
 X=X_c\cup X_t,
 \qquad
 Y=Y_c\cup Y_t.
\]
Since
\[
 X_c\cup X_t\subseteq K_C\subseteq B_C(L),
 \qquad
 Y_c\cup Y_t\subseteq K_{O_0}\subseteq B_{O_0}(L),
\]
we have
\[
 X\subseteq B_C(L),
 \qquad
 Y\subseteq B_{O_0}(L).
\]
Every
\[
 z\in D_{\rm r}\cup X\cup Y
\]
satisfies
\[
 \begin{cases}
 z\not\equiv e_i\pmod{q_i},&1\le i\le k,\\
 z\not\equiv e_{k+1}\pmod{q_{k+1}},&k<N,\ q_{k+1}\le2L.
 \end{cases}
\]
Also,
\[
 I\cap(t+q\mathbb Z)
 =\bigl((I\cap(t+q\mathbb Z))\setminus\mathcal G\bigr)
 \cup\mathcal G_0
 \cup(\mathcal G\setminus\mathcal G_0),
\]
and
\[
 \begin{gathered}
 (I\cap(t+q\mathbb Z))\setminus\mathcal G\subseteq X_c+Y_c,\\
 \mathcal G_0\subseteq a+D_{\rm r},\qquad
 \mathcal G\setminus\mathcal G_0\subseteq X_t+Y_t.
 \end{gathered}
\]
Therefore
\[
 \begin{aligned}
 I\cap(t+q\mathbb Z)
 &\subseteq(X_c+Y_c)\cup(a+D_{\rm r})\cup(X_t+Y_t)\\
 &\subseteq(a+D_{\rm r})\cup(X+Y).
 \end{aligned}
\]
Since
\[
 \begin{gathered}
 |D_{\rm r}|\le|\mathcal G_0|,\\
 |X_c|,|Y_c|\le|(I\cap(t+q\mathbb Z))\setminus\mathcal G|,\\
 |X_t|,|Y_t|\le|\mathcal G\setminus\mathcal G_0|,
 \end{gathered}
\]
we get
\[
 \begin{aligned}
 |D_{\rm r}|+|X|+|Y|
 &\le |\mathcal G_0|
 +2|(I\cap(t+q\mathbb Z))\setminus\mathcal G|
 +2|\mathcal G\setminus\mathcal G_0|\\
 &\le2|I\cap(t+q\mathbb Z)|\\
 &\le2|I|\\
 &=O(T).
 \end{aligned}
\]

Put
\[
 \mathcal P_0=I'\cap\mathcal G_0,
 \qquad
 \mathcal P_a(L)=\mathcal P_0\setminus(X_t+Y_t).
\]
If $n\in\mathcal P_a(L)\subseteq\mathcal G_0$, then
\[
 n-a\in D_{\rm r},
\]
and hence
\[
 n=a+(n-a)\in2\mathcal A^{(1)}.
\]
It remains to exclude
\[
 n\in2(\mathcal A^{(1)}\setminus\{a\}).
\]

Let $\mathcal C=\mathcal A^{(1)}\setminus\mathcal E(L)$. Since
\[
\begin{gathered}
 A_0\subseteq[0,L/R]\subseteq[0,\tau L],\qquad
 D_{\rm o}\subseteq[\delta L,\rho L]\subseteq[0,\tau L],\\
 H_0^{\rm o}\cup D_{\rm r}\subseteq B_H(L),\qquad
 V\cup X\subseteq B_C(L),\qquad
 U_0\cup Y\subseteq B_{O_0}(L),
\end{gathered}
\]
we have
\[
\begin{aligned}
 \mathcal C
 &\subseteq A_0\cup D_{\rm o}\cup H_0^{\rm o}\cup V\cup U_0
 \cup D_{\rm r}\cup X\cup Y\\
 &\subseteq[0,\tau L]\cup B_H(L)\cup B_C(L)\cup B_{O_0}(L).
\end{aligned}
\]
By the hypothesis on $H_0^{\rm o}$, \eqref{eq353}, and the definitions modulo $q$ of
$D_{\rm r},X,Y$,
\[
 \begin{aligned}
 x-a&\equiv2B_0\pmod q
 &&(x\in\mathcal C\cap B_H(L)),\\
 x-a&\in\{B_0,B_0+1,B_0+2\}\pmod q
 &&(x\in\mathcal C\cap B_C(L)),\\
 x-a&\in\{B_0,B_0-1,B_0-2\}\pmod q
 &&(x\in\mathcal C\cap B_{O_0}(L)).
 \end{aligned}
\]
Thus
\[
 (x-a)\bmod q\in
 \bigl(\{B_0+u:-2\le u\le2\}\cup\{2B_0\}\bigr)\bmod q
 \subseteq[B_0-2,2B_0]\bmod q
 =\mathcal R_q.
\]
Hence \eqref{eq323} and \eqref{eq324} hold.
By \eqref{eq325} in Lemma~\ref{lem34}, if
\[
 n\in K\cap(t+q\mathbb Z),
\]
then no sum of two elements, one of which lies in $\mathcal E(L)$, can equal $n$.
Since
\[
 \mathcal P_a(L)\subseteq I'\subseteq K,
\]
it remains only to consider $2\mathcal C$.

Since
\[
 K\subseteq J,\qquad
 \mathcal A^{(1)}\setminus\mathcal E(L)
 \subseteq[0,\tau L]\cup B_H(L)\cup B_C(L)\cup B_{O_0}(L),
\]
by \eqref{eq36} there are only two possible interval combinations inside $K$:
\[
 [0,\tau L]+B_H(L),\qquad B_C(L)+B_{O_0}(L).
\]
From the preceding congruence relations,
\[
 y\equiv a+2B_0\equiv r\pmod q
 \quad(y\in\mathcal C\cap B_H(L)).
\]
If
\[
 n=x+y,
 \qquad
 x\in\mathcal C\cap[0,\tau L],
 \qquad
 y\in\mathcal C\cap B_H(L),
\]
then $n\equiv t\pmod q$ gives
\[
 \begin{aligned}
 x
 &\equiv t-r\\
 &\equiv(2a+2B_0)-(a+2B_0)\\
 &\equiv a
 \pmod q.
 \end{aligned}
\]
Since $a=e_j$, $q=q_j$, $j\le k$, and by
\eqref{eq22}, \eqref{eq321}, the residue-avoidance hypotheses on
$D_{\rm o}\cup H_0^{\rm o}$, \eqref{eq350}, and \eqref{eq359},
\[
 A_0\cap(a+q\mathbb Z)=\{a\},
 \qquad
 (\mathcal A^{(1)}\setminus A_0)\cap(a+q\mathbb Z)=\varnothing.
\]
Therefore
\[
 \mathcal A^{(1)}\cap(a+q\mathbb Z)=\{a\},
 \qquad x=a.
\]

For the other interval combination, \eqref{eq353} and \eqref{eq349} give
\[
 \begin{aligned}
 x\bmod q&\in\{a+B_0,a+B_0+1,a+B_0+2\}
 \quad(x\in\mathcal C\cap B_C(L)),\\
 y\bmod q&\in\{a+B_0,a+B_0-1,a+B_0-2\}
 \quad(y\in\mathcal C\cap B_{O_0}(L)).
 \end{aligned}
\]
Let
\[
 x\equiv a+B_0+u\pmod q,
 \qquad
 y\equiv a+B_0-v\pmod q,
\]
where
\[
 u,v\in\{0,1,2\}.
\]
Then
\[
 x+y\equiv t+(u-v)\pmod q.
\]
If $u\ne v$, then, since $u,v\in\{0,1,2\}$ and $q>2$,
\[
 0<|u-v|\le2<q,
 \qquad t+(u-v)\not\equiv t\pmod q.
\]
Hence $x+y=n\equiv t\pmod q$ implies
\[
 u=v.
\]

If $u=v=0$, then by \eqref{eq349},
\[
 x+y\in V+U_0,\qquad
 n\in I',\qquad I'\cap(V+U_0)=\varnothing,
\]
so $x+y\ne n$.

If $u=v=1$, then
\[
 x+y\in X_c+Y_c,\qquad
 n\in\mathcal P_0\subseteq\mathcal G,\qquad
 (X_c+Y_c)\cap\mathcal G=\varnothing,
\]
so again $x+y\ne n$.

If $u=v=2$, then by the definition of $\mathcal P_a(L)$,
\[
 x+y\in X_t+Y_t,\qquad n\notin X_t+Y_t,
\]
and hence $x+y\ne n$.

Therefore
\[
 n\notin2(\mathcal A^{(1)}\setminus\{a\}),\qquad n\in2\mathcal A^{(1)},
\]
which gives
\[
 \mathcal P_a(L)
 \subseteq2\mathcal A^{(1)}\setminus2(\mathcal A^{(1)}\setminus\{a\}).
\]

Modulo $q=q_j$, the condition
\[
 n\equiv t\pmod q
\]
fixes one residue class. For $i\le k$, $i\ne j$, since $|H_i|=2$, the condition
\[
 n-2e_i\notin H_i\pmod{q_i}
\]
leaves $q_i-2$ residue classes. Put
\[
 S_j=\{t\}\subseteq\mathbb Z_q,
 \qquad
 S_i=\mathbb Z_{q_i}\setminus(2e_i+H_i)
 \quad(i\le k,\ i\ne j).
\]
Then
\[
 |S_j|=1,
 \qquad
 |S_i|=q_i-2.
\]
By \eqref{eq211} in Lemma~\ref{lem23}, with $m=1$,
\[
 \begin{aligned}
 \#\{u\pmod Q:u\in\widehat{\mathcal G}\}
 &=|S_j|\prod_{\substack{i\le k\\i\ne j}}|S_i|\\
 &=\prod_{\substack{i\le k\\i\ne j}}(q_i-2)\\
 &=\frac Qq
 \prod_{\substack{i\le k\\i\ne j}}
 \left(1-\frac2{q_i}\right).
 \end{aligned}
\]
For $h=2$, the choice of the primes gives
\[
 q_i^{-1/2}<2^{-i-10}.
\]
Thus
\[
 \frac2{q_i}<2^{1-2(i+10)},
 \qquad
 \sum_{i\ge1}\frac2{q_i}<\frac12.
\]
Using the inequality
\[
 \prod_i(1-x_i)\ge1-\sum_i x_i
\]
for $0\le x_i\le1$, with $x_i=2/q_i$, we obtain
\[
 \prod_i\left(1-\frac2{q_i}\right)>\frac12.
\]
Let
\[
 \mathcal B_r=[rQ,(r+1)Q-1]\cap\mathbb Z,
 \qquad
 \mathcal M(I')=\{r\in\mathbb Z:\mathcal B_r\subseteq I'\}.
\]
Then
\[
 \#\bigl(\widehat{\mathcal G}\cap\mathcal B_r\bigr)
 =\frac Qq\prod_{\substack{i\le k\\i\ne j}}
 \left(1-\frac2{q_i}\right)
 >\frac Q{2q}
 \qquad(r\in\mathcal M(I')).
\]
Moreover,
\[
 \left|I'\setminus\bigcup_{r\in\mathcal M(I')}\mathcal B_r\right|<2Q,
 \qquad
 |\mathcal M(I')|Q\ge |I'|-2Q,
\]
and hence
\[
 |\mathcal M(I')|\ge\frac{|I'|-2Q}{Q}.
\]
Since
\[
 I'\subseteq I,
 \qquad
 \mathcal G=I\cap\widehat{\mathcal G},
 \qquad
 I'\cap\widehat{\mathcal G}=I'\cap\mathcal G,
\]
we obtain
\[
 \begin{aligned}
 |I'\cap\mathcal G|
 &\ge\sum_{r\in\mathcal M(I')}
 \#\bigl(\widehat{\mathcal G}\cap\mathcal B_r\bigr)\\
 &> |\mathcal M(I')|\frac Q{2q}\\
 &\ge\frac{|I'|-2Q}{Q}\cdot\frac Q{2q}\\
 &=\frac{|I'|}{2q}-\frac Qq.
 \end{aligned}
\]
In the case $p=1$, \eqref{eq356} gives
\[
 |I'|=|J_1|\ge\frac{\zeta}{2}T.
\]
Put
\[
 c_1=\frac{\zeta}{2},\qquad c_0=\frac{c_1}{8}=\frac{\zeta}{16}.
\]
Since $Q\le\eta T$ and $\eta\le\zeta/100<c_1/4$,
\[
 |I'\cap\mathcal G|
 \ge\left(\frac{c_1}{2}-\eta\right)\frac Tq
 \ge\frac{c_1}{4}\frac Tq.
\]
Then
\[
 |I'\cap\mathcal G|
 \ge2c_0\frac Tq.
\]

Let
\[
 K_t=|\mathcal G\setminus\mathcal G_0|.
\]
If $k=N$, or if $k<N$ and $q_{k+1}>2L$, then $\mathcal G_0=\mathcal G$, and therefore
\[
 K_t=0.
\]
It remains to consider the case
\[
 k<N,
 \qquad
 \ell=q_{k+1}\le2L.
\]
If $n\in\mathcal G\setminus\mathcal G_0$, then
\[
 n\equiv t\pmod q,
 \qquad
 n-a\equiv e_{k+1}\pmod\ell.
\]
Since $(q,\ell)=1$, the two congruences determining
$\mathcal G\setminus\mathcal G_0$ specify one residue class modulo $q\ell$. Hence
\[
 K_t\le \frac{|I|}{q\ell}+1.
\]
By $I=J_0$ and \eqref{eq355},
\[
 |I|\le\zeta T+1,
 \qquad
 K_t\le\frac{\zeta T}{q\ell}+2.
\]
By the definitions of $X_t,Y_t$,
\[
 |X_t|\le K_t,
 \qquad
 |Y_t|\le K_t,
 \qquad
 |X_t+Y_t|
 \le|X_t||Y_t|
 \le K_t^2.
\]
Consequently,
\[
 \begin{aligned}
 \frac{K_t+K_t^2}{T/q}
 &\le
 \frac{5\zeta}{\ell}
 +\frac{\zeta^2T}{q\ell^2}
 +\frac{6q}{T}.
 \end{aligned}
\]
By the maximality of $k$ and \eqref{eq21},
\[
 Q\ell>\eta T,
 \qquad
 \ell>C_*Q^4,
 \qquad
 q\le Q\le\eta T,
\]
and therefore
\[
 \frac{\zeta^2T}{q\ell^2}
 <\frac{\zeta^2}{C_*\eta qQ^3},
 \qquad
 \frac qT\le\eta.
\]
Using \eqref{eq39}, $h\ge2$, and $c_0=\zeta/16$,
\[
 q_0>1000h^3\ge8000,
 \qquad
 \eta\le\frac{\zeta}{1000},
 \qquad
 C_*>\frac{96}{\zeta\eta q_0^4}.
\]
Since $q,Q,\ell>q_0$ and $0<\zeta\le1/2$,
\[
 \begin{aligned}
 \frac{K_t+K_t^2}{T/q}
 &<\frac{5\zeta}{8000}
   +\frac{\zeta^3}{96}
   +\frac{6\zeta}{1000}\\
 &<\frac{\zeta}{32}
 =\frac{c_0}{2}.
 \end{aligned}
\]
Since
\[
 \mathcal P_0=I'\cap\mathcal G_0
\]
and $\mathcal P_a(L)=\mathcal P_0\setminus(X_t+Y_t)$,
\[
 \begin{aligned}
 |\mathcal P_a(L)|
 &\ge|I'\cap\mathcal G|-K_t-K_t^2\\
 &\ge2c_0\frac Tq-\frac{c_0}{2}\frac Tq\\
 &\ge c_0\frac Tq.
 \end{aligned}
\]
Therefore
\[
 |\mathcal P_a(L)|\gg T/q.
\]
\end{proof}

\begin{lemma}\label{lem37}
There exist constants $L_*,c,C_1>0$, depending only on $h,\theta$ and the fixed parameters, with the following property. Whenever $L\ge L_*$ and \eqref{eq310}, \eqref{eq21}, \eqref{eq22}, and \eqref{eq311} hold, there exists a set $A_1\supseteq A_0$ such that
\begin{align}
 A_1&\subseteq[0,L],\qquad
 A_1\setminus A_0\subseteq[\delta L,L],\label{eq363}\\
 cL^\theta&\le|A_1\setminus A_0|\le C_1L^\theta,\label{eq364}\\
 [\lambda L,\mu L]&\subseteq hA_1.\label{eq365}
\end{align}
For every $x\in A_1\setminus A_0$,
\begin{equation}\label{eq366}
\begin{aligned}
 x&\not\equiv e_i\pmod{q_i}\quad(1\le i\le k),\\
 x&\not\equiv e_{k+1}\pmod{q_{k+1}}
 \quad(k<N,\ q_{k+1}\le2L).
\end{aligned}
\end{equation}
Moreover, there exists a set $\mathcal P_a(L)$ such that
\begin{align}
 \mathcal P_a(L)&\subseteq[0,WL]\cap\bigl(hA_1\setminus h(A_1\setminus\{a\})\bigr),\label{eq367}\\
 |\mathcal P_a(L)|&\ge cT/q_j.\label{eq368}
\end{align}
\end{lemma}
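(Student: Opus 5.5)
The plan is to assemble $A_1$ as $A_0$ together with a bounded number of pieces, each made by the earlier lemmas. Each piece lies in $[\delta L,L]$ and obeys the residue avoidance \eqref{eq24}. The pieces are kept apart by the disjoint windows of Lemma~\ref{lem32}, namely $[0,\tau L]$ and $B_d(L)$ for $d\in\mathcal D$, and by their residues modulo $q=q_j$. Then \eqref{eq363}, \eqref{eq366} and the upper bound in \eqref{eq364} follow piece by piece. Every piece has size $O(L^\theta)$: use \eqref{eq320}, \eqref{eq352} (where $T^{1/p}\le L^\theta$ and $L/T\asymp L^{1-\kappa}\le L^\theta$), \eqref{eq354} and \eqref{eq358}, together with $q^p\le L^{h\theta-1}$ from \eqref{eq311}. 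For the lower bound in \eqref{eq364} I would add a padding set. It consists of about $L^\theta$ integers in $[\delta L,\rho L]$, each satisfying \eqref{eq24} and each $\not\equiv a\pmod q$. Such integers exist by a density count, since the excluded classes have relative density at most $\sum1/q_i+1/q<2^{-9}$ by Lemma~\ref{lem21}. Padding lies in $[0,\tau L]$ and avoids the class of $a$, so it is harmless below.

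For the covering \eqref{eq365}, the range $[\lambda L,\mu L]\setminus J$ is handled by $\mathcal E(L)$ from Lemma~\ref{lem34}. On $J$ I split by residue modulo $q$.

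For $n\in J\setminus(t+q\mathbb Z)$ I build $D_{\rm o}\subseteq[\delta L,\rho L]$ and $H_0^{\rm o}\subseteq B_H(L)$ with $D_{\rm o}+pH_0^{\rm o}\supseteq J\setminus(t+q\mathbb Z)$. First, Lemma~\ref{lem26} gives interval data with $\mathcal I_1=[\delta,\rho]$ and $\mathcal I_\nu$ a slightly shrunk copy of $B_H(1)$ for $\nu\ge2$; this works because $\alpha pH=\omega$. Next, Lemma~\ref{lem27} with $d=h$ is applied with $S_0=\mathbb Z_q\setminus\{a\}$ for the first coordinate and $S_\nu=\{r\}$ for the others. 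The sumset is then $\mathbb Z_q\setminus\{a+pr\}=\mathbb Z_q\setminus\{t\}$. Finally, Lemma~\ref{lem28}(i) is applied with $X=L$. The sizes are $\ll(L/Q)^{1/h}q(Q/q)^{1/h}=q^{p/h}L^{1/h}\le L^\theta$, and all elements of $H_0^{\rm o}$ are $\equiv r\pmod q$.

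For $n\in J\cap(t+q\mathbb Z)$, Lemma~\ref{lem35} covers $J\setminus J_0$ by $V+U_0+\cdots+U_{p-1}$. It remains to cover $J_0\cap(t+q\mathbb Z)$ by $a+pD_{\rm r}$.
\begin{itemize}
\item If $p=1$, this is exactly Lemma~\ref{lem36}, which also supplies $\mathcal P_a(L)$.
\item If $p\ge2$, I use the same recipe as for $D_{\rm o}$ with $m=p$ summands. The set $J_0-a$ lies in a window of length $\asymp T$ near $\alpha pHL$, and $p$ copies of a $B_H$-subinterval of length $\asymp T$ cover it. Take $S_i=\{r\}$ in Lemma~\ref{lem27}, and apply Lemma~\ref{lem28}(i) with $X\asymp T$. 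This gives $|D_{\rm r}|\ll(T/q)^{1/p}\le L^\theta$, with every element $\equiv r\pmod q$.
\end{itemize}

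For \eqref{eq367} and \eqref{eq368} with $p\ge2$, I set $\mathcal P_a(L)=J_1\cap(t+q\mathbb Z)$. Since $|J_1|\gg T$ by \eqref{eq356}, this set has $\gg T/q$ elements, and it lies in $K\subseteq[0,WL]$. Each such $n$ lies in $a+pD_{\rm r}\subseteq hA_1$. To exclude $n\in h(A_1\setminus\{a\})$, apply \eqref{eq325} with $\mathcal C=A_1\setminus\mathcal E(L)$. The residue hypothesis \eqref{eq324} holds because windowed elements are $\equiv a+hB_0$ or $a+pB_0$, both in $\mathcal R_q$. So no representation uses $\mathcal E(L)$, and by Lemma~\ref{lem32} the remaining options are the two patterns in \eqref{eq32}.
\begin{itemize}
\item In the pattern $\{0,H,\dots,H\}$, the small summand $x$ must satisfy $x\equiv t-pr\equiv a\pmod q$. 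By Lemma~\ref{lem21} and the avoidance conditions, $A_1\cap(a+q\mathbb Z)=\{a\}$, so $x=a$.
\item In the pattern $\{C,O_0,\dots,O_{p-1}\}$, the summands come only from $V$ and the $U_i$, whose sumset misses $J_1$ by \eqref{eq349}.
\end{itemize}

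The step I expect to be the main obstacle is checking the interval hypotheses of Lemma~\ref{lem28}(i) for the $D_{\rm o}$ and $D_{\rm r}$ coverings with the already fixed margins $c\ge c_{\rm g}$, in particular with $X\asymp T$ in the $D_{\rm r}$ case. A second delicate point is making sure that the padding and $D_{\rm o}$ never create a forbidden sum in $K\cap(t+q\mathbb Z)$. For this I would rely on their residues being $\ne a$, which forces any sum through the pattern $\{0,H,\dots,H\}$ to be $\not\equiv t\pmod q$.
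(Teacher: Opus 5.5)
Your proposal is correct and follows the paper's proof essentially step for step. It uses the same split of $[\lambda L,\mu L]$ into $[\lambda L,\mu L]\setminus J$, $J\setminus(t+q\mathbb Z)$, $(J\setminus J_0)\cap(t+q\mathbb Z)$ and $J_0\cap(t+q\mathbb Z)$, the same uses of Lemmas~\ref{lem27} and \ref{lem28}\textup{(i)} with $X=L$ and $X=T$, the same $\mathcal P_a(L)=J_1\cap(t+q\mathbb Z)$, and the same exclusion via \eqref{eq325} and the two patterns of \eqref{eq36}. The margin issue you flag is settled because $c_{\rm J}$ and $1/(10p)$ are built into $c_{\rm g}$; the paper checks $J+[-c_{\rm J}L,c_{\rm J}L]\subseteq[\delta L,\rho L]+pB_H(L)$ directly rather than via Lemma~\ref{lem26}.

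The only genuine deviation is the padding. The paper puts it in $[(1-\rho)L,L]$, above $WL$, so it cannot affect $\mathcal P_a(L)$ for size reasons alone. Your placement in $[\delta L,\rho L]$ works instead through the residue $\not\equiv a\pmod q$, and that is valid, with two provisos:
\begin{itemize}
\item take the padding inside $(L/R,\rho L]$, so that it is disjoint from $A_0$ and really increases $|A_1\setminus A_0|$;
\item for $p=1$, merge the padding into $D_{\rm o}$ before invoking Lemma~\ref{lem36}. Its conclusion \eqref{eq362} concerns only the specific set $\mathcal A^{(1)}$, but its hypotheses accept any residue-avoiding $D_{\rm o}\subseteq[\delta L,\rho L]$.
\end{itemize}
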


\begin{proof}
By Lemma~\ref{lem34}, choose $\mathcal E(L)$ satisfying \eqref{eq320}--\eqref{eq325}. We construct
\[
 D_{\rm o}\subseteq[\delta L,\rho L],\qquad H_i^{\rm o}\subseteq B_H(L)\quad(0\le i<p),
\]
so that
\begin{equation}\label{eq369}
 |D_{\rm o}|+\sum_{i=0}^{p-1}|H_i^{\rm o}|=O(L^\theta).
\end{equation}
For every $x\in D_{\rm o}\cup\bigcup_iH_i^{\rm o}$,
\begin{equation}\label{eq370}
\begin{aligned}
 x&\not\equiv e_\nu\pmod{q_\nu}\quad(1\le\nu\le k),\\
 x&\not\equiv e_{k+1}\pmod{q_{k+1}}
 \quad(k<N,\ q_{k+1}\le2L).
\end{aligned}
\end{equation}
In addition, every $x\in H_i^{\rm o}$ satisfies
\begin{equation}\label{eq371}
 x\equiv r\pmod q,
\end{equation}
and
\begin{equation}\label{eq372}
 J\setminus(t+q\mathbb Z)\subseteq D_{\rm o}+H_0^{\rm o}+\cdots+H_{p-1}^{\rm o}.
\end{equation}

Since $\alpha H=\omega/p$, sums of integer intervals are again integer intervals, and
\[
 \begin{aligned}
 \min\bigl([\delta L,\rho L]+pB_H(L)\bigr)
 &=\lceil\delta L\rceil+p\lceil(\alpha H-\rho)L\rceil\\
 &\le(\omega-p\rho+\delta)L+h,\\
 \max\bigl([\delta L,\rho L]+pB_H(L)\bigr)
 &=\lfloor\rho L\rfloor+p\lfloor(\alpha H+\rho)L\rfloor\\
 &\ge(\omega+(p+1)\rho)L-h.
 \end{aligned}
\]
By $\eps=\rho/100$ and $\delta<\eps/(10h)$,
\[
 p\rho-\delta-\eps>0,\qquad
 (p+1)\rho-\eps>0,\qquad
 \rho-\delta>0.
\]
Choose the fixed constant
\[
 c=c_{\rm J}=\frac14\min\{p\rho-\delta-\eps,\,
 (p+1)\rho-\eps,\,\rho-\delta,\,2\rho\}>0.
\]
For all sufficiently large $L$,
\[
 \begin{aligned}
 (\omega-p\rho+\delta)L+h
 &\le(\omega-\eps-c)L,\\
 (\omega+(p+1)\rho)L-h
 &\ge(\omega+\eps+c)L,\\
 (\rho-\delta)L-1&\ge cL,\\
 2\rho L-1&\ge cL.
 \end{aligned}
\]
Hence
\[
 J+[-cL,cL]\subseteq[\delta L,\rho L]+pB_H(L),
 \qquad
 |[\delta L,\rho L]|,\ |B_H(L)|\ge cL.
\]

Put
\[
 S_0=\mathbb Z_{q}\setminus\{a\},\qquad
 S_i=\{r\}\quad(1\le i\le p).
\]
Since $r-a\equiv hB_0\not\equiv0\pmod q$ and
$t\equiv a+pr\pmod q$,
\[
 a\notin S_i\quad(0\le i\le p),\qquad
 S_0+\cdots+S_p
 =(\mathbb Z_{q}\setminus\{a\})+\{pr\}
 =\mathbb Z_{q}\setminus\{t\}.
\]
Therefore, by Lemma~\ref{lem27} with $d=h$, there exist sets $R_0,\ldots,R_p$ modulo $Q$ such that
\[
 \begin{aligned}
 \Omega:=(R_0+\cdots+R_p)\bmod Q
 &=\{u\bmod Q:u\not\equiv t\pmod q\},\\
 |R_0|&\ll q(Q/q)^{1/h},\qquad
 |R_i|\ll(Q/q)^{1/h}\quad(1\le i\le p).
 \end{aligned}
\]

From
\[
 J+[-cL,cL]\subseteq[\delta L,\rho L]+pB_H(L),\qquad
 |[\delta L,\rho L]|,\ |B_H(L)|\ge cL,\qquad
 Q\le\eta T\le\eta\sigma L,
\]
apply Lemma~\ref{lem28}\textup{(i)} with
\[
 m=h,\quad X=L,\quad U=J,\quad
 V_0=[\delta L,\rho L],\quad
 V_i=B_H(L)\ (1\le i\le p),
\]
and denote the resulting sets by
\[
 D_{\rm o}=D_0,\qquad H_{i-1}^{\rm o}=D_i\quad(1\le i\le p).
\]
By \eqref{eq227}, \eqref{eq24}, and \eqref{eq223}, respectively,
\[
 \begin{aligned}
 J\setminus(t+q\mathbb Z)
 &=J\cap\{n:n\bmod Q\in\Omega\}\\
 &\subseteq D_{\rm o}+H_0^{\rm o}+\cdots+H_{p-1}^{\rm o},\\
 x&\not\equiv e_\nu\pmod{q_\nu}
 \quad(x\in D_{\rm o}\cup\bigcup_iH_i^{\rm o},\ 1\le\nu\le k),\\
 H_i^{\rm o}&\equiv r\pmod q\quad(0\le i<p).
 \end{aligned}
\]
Moreover,
\[
 x\not\equiv e_{k+1}\pmod{q_{k+1}}\qquad
 \left(x\in D_{\rm o}\cup\bigcup_{i=0}^{p-1}H_i^{\rm o},\ k<N,\ q_{k+1}\le2L\right).
\]

Since $q^p\le L^{h\theta-1}$ and $\theta>1/h$,
\[
 \begin{aligned}
 |D_{\rm o}|
 &\ll (L/Q)^{1/h}q(Q/q)^{1/h}
 =q^{p/h}L^{1/h}\le L^\theta,\\
 |H_i^{\rm o}|
 &\ll (L/Q)^{1/h}(Q/q)^{1/h}
 =q^{-1/h}L^{1/h}\le L^\theta\quad(0\le i<p),\\
 |D_{\rm o}|+\sum_{i=0}^{p-1}|H_i^{\rm o}|
 &=O(L^\theta)\qquad(h=p+1\ \text{fixed}).
 \end{aligned}
\]

By Lemma~\ref{lem35}, choose
\[
 J_1\subseteq J_0\subseteq K,\qquad
 V\subseteq B_C(L),\qquad U_i\subseteq B_{O_i}(L)\ (0\le i<p),
\]
satisfying \eqref{eq347}--\eqref{eq350}; if $p\ge2$, they also satisfy \eqref{eq351}--\eqref{eq352}, while if $p=1$, they also satisfy \eqref{eq353}--\eqref{eq354}.

If $p\ge2$, construct $D_0,\ldots,D_{p-1}\subseteq B_H(L)$ such that, for every $0\le i<p$ and every $x\in D_i$,
\begin{equation}\label{eq373}
 x\equiv r\pmod q,
\end{equation}
and
\begin{equation}\label{eq374}
\begin{aligned}
 x&\not\equiv e_\nu\pmod{q_\nu}\quad(1\le\nu\le k),\\
 x&\not\equiv e_{k+1}\pmod{q_{k+1}}
 \quad(k<N,\ q_{k+1}\le2L).
\end{aligned}
\end{equation}
Furthermore,
\begin{align}
 \sum_{i=0}^{p-1}|D_i|&=O(T^{1/p}),\label{eq375}\\
 J_0\cap(t+q\mathbb Z)&\subseteq a+D_0+\cdots+D_{p-1}.\label{eq376}
\end{align}

Since $p\ge2$ and $r-a\equiv hB_0\not\equiv0\pmod q$, apply Lemma~\ref{lem27} with $d=p$ and $S_i=\{r\}$ for $0\le i<p$. We obtain
\[
 \begin{aligned}
 \Omega:=(R_0+\cdots+R_{p-1})\bmod Q
 &=\{u\bmod Q:u\equiv pr\pmod q\},\\
 |R_i|&\ll(Q/q)^{1/p},\qquad
 R_i\cap(e_\nu+q_\nu\mathbb Z)=\varnothing
 \quad(1\le\nu\le k).
 \end{aligned}
\]

Let $m_0=\operatorname{mid}(J_0)$ and $c_0=(m_0-a)/p$. Since $J_0\subseteq K$, $a\le\rho T/10$, $\alpha H=\omega/p$, and $T\le\sigma L$,
\[
 |c_0-\alpha HL|
 \le\frac{\eps L}{4p}+\frac{\rho T}{10p}
 \le\left(\frac{\rho}{400p}
     +\frac{\rho\sigma}{10p}\right)L<\frac\rho4L.
\]
By \eqref{eq355}, $\zeta\le1/2$, and sufficiently large $T$,
\[
 |J_0|\le\zeta T+1\le T.
\]
Take $c_1=1/(10p)$ and $C_2=1/p$, and set
\[
 V_i=[c_0-C_2T,c_0+C_2T]\cap\mathbb Z
 \qquad(0\le i<p),\qquad U=J_0-a.
\]
Since $\sigma<\eps/(100h)$,
\[
 C_2\sigma<\rho/4,\qquad
 |c_0-\alpha HL|+C_2T
 <\tfrac12\rho L,\qquad V_i\subseteq B_H(L).
\]
Also, since $\sum_{i=0}^{p-1}c_0=m_0-a$,
\[
 \begin{aligned}
 \min V_i&=\lceil c_0-C_2T\rceil\le c_0-C_2T+1,\\
 \max V_i&=\lfloor c_0+C_2T\rfloor\ge c_0+C_2T-1,\\
 |V_i|&\ge2C_2T-1=\frac{2T}{p}-1,
 \end{aligned}
\]
and
\[
 \begin{aligned}
 \min(V_0+\cdots+V_{p-1})
 &\le p(c_0-C_2T+1)=m_0-a-T+p,\\
 \max(V_0+\cdots+V_{p-1})
 &\ge p(c_0+C_2T-1)=m_0-a+T-p.
 \end{aligned}
\]
Since $p\ge2$, $c_1=1/(10p)$, and $|U|=|J_0|\le T$, for $T\ge3p$,
\[
 \frac{|U|}{2}+c_1T
 \le\left(\frac12+\frac1{10p}\right)T
 \le\frac{11}{20}T
 <T-p.
\]
Consequently,
\[
 \begin{gathered}
 |V_i|\ge\frac{2T}{p}-1\ge\frac{T}{p}\ge c_1T,
 \qquad |U|\le T,\\
 U+[-c_1T,c_1T]
 \subseteq[m_0-a-(T-p),m_0-a+(T-p)]\cap\mathbb Z
 \subseteq V_0+\cdots+V_{p-1}.
 \end{gathered}
\]
Since $Q\le\eta T$, apply Lemma~\ref{lem28}\textup{(i)} with $m=p$ and $X=T$. Then
\[
 \begin{aligned}
 D_i&\subseteq V_i\subseteq B_H(L),\qquad D_i\equiv r\pmod q,\\
 x&\not\equiv e_\nu\pmod{q_\nu}
 \quad(x\in D_i,\ 1\le\nu\le k),\\
 x&\not\equiv e_{k+1}\pmod{q_{k+1}}
 \quad(x\in D_i,\ k<N,\ q_{k+1}\le2L).
 \end{aligned}
\]
Moreover,
\[
 \begin{aligned}
 (J_0-a)\cap\{u:u\bmod Q\in\Omega\}
 &\subseteq D_0+\cdots+D_{p-1},\\
 |D_i|&\ll(T/Q)^{1/p}(Q/q)^{1/p}
 =(T/q)^{1/p}.
 \end{aligned}
\]
If $n\in J_0\cap(t+q\mathbb Z)$, then, since $t\equiv a+pr\pmod q$,
\[
 n-a\in(J_0-a)\cap\{u:u\bmod Q\in\Omega\},
 \qquad n\in a+D_0+\cdots+D_{p-1}.
\]
Thus
\[
 \sum_{i=0}^{p-1}|D_i|
 \ll p(T/q)^{1/p}=O(T^{1/p}).
\]

Put
\[
 \mathcal A^{(p)}=A_0\cup\mathcal E(L)\cup D_{\rm o}
 \cup\bigcup_iH_i^{\rm o}\cup V\cup\bigcup_iU_i\cup\bigcup_iD_i.
\]
Then
\begin{equation}\label{eq377}
 \cP_a(L)=J_1\cap(t+q\mathbb Z)
\end{equation}
satisfies
\begin{equation}\label{eq378}
 \cP_a(L)\subseteq
 h\mathcal A^{(p)}\setminus h(\mathcal A^{(p)}\setminus\{a\}),
\end{equation}
and
\begin{equation}\label{eq379}
 |\cP_a(L)|\ge |J_1|/q-2\gg T/q.
\end{equation}

Let
\[
 \mathcal C:=\mathcal A^{(p)}\setminus\mathcal E(L).
\]
From
\[
 \begin{gathered}
 A_0\subseteq[0,L/R]\subseteq[0,\tau L],\qquad
 D_{\rm o}\subseteq[\delta L,\rho L]\subseteq[0,\tau L],\\
 H_i^{\rm o}\subseteq B_H(L)\quad(0\le i<p),\qquad
 D_i\subseteq B_H(L)\quad(0\le i<p),\\
 V\subseteq B_C(L),\qquad
 U_i\subseteq B_{O_i}(L)\quad(0\le i<p),
 \end{gathered}
\]
we obtain
\[
 \begin{aligned}
 \mathcal C
 &\subseteq A_0\cup D_{\rm o}\cup\bigcup_{i=0}^{p-1}H_i^{\rm o}
 \cup V\cup\bigcup_{i=0}^{p-1}U_i\cup\bigcup_{i=0}^{p-1}D_i\\
 &\subseteq[0,\tau L]\cup B_H(L)\cup B_C(L)
 \cup\bigcup_{i=0}^{p-1}B_{O_i}(L)\\
 &= [0,\tau L]\cup\bigcup_{d\in\mathcal D}B_d(L).
 \end{aligned}
\]
By \eqref{eq371}, \eqref{eq351}, and \eqref{eq373},
\[
 \begin{aligned}
 x-a&\equiv hB_0\pmod q
 &&(x\in\mathcal C\cap B_H(L)),\\
 x-a&\equiv pB_0\pmod q
 &&\left(x\in\mathcal C\cap
 \left(B_C(L)\cup\bigcup_{i=0}^{p-1}B_{O_i}(L)\right)\right).
 \end{aligned}
\]
Hence
\[
 (x-a)\bmod q\in\{pB_0,hB_0\}\bmod q
 \subseteq[pB_0-2,hB_0]\bmod q
 =\mathcal R_q.
\]
Thus \eqref{eq323} and \eqref{eq324} hold. By \eqref{eq376},
\[
 \cP_a(L)\subseteq h\mathcal A^{(p)}.
\]
For $n\in\cP_a(L)$, \eqref{eq325} of Lemma~\ref{lem34} excludes every representation containing an element of $\mathcal E(L)$. Since
\[
 J_1\subseteq K\subseteq J,\qquad
 \mathcal C\subseteq[0,\tau L]\cup\bigcup_{d\in\mathcal D}B_d(L),
\]
\eqref{eq36} leaves only the two interval patterns
\[
 \{C,O_0,\ldots,O_{p-1}\},\qquad
 \{0,H,\ldots,H\}.
\]
Since
\[
 \mathcal C\cap B_C(L)\subseteq V,
 \qquad
 \mathcal C\cap B_{O_i}(L)\subseteq U_i\quad(0\le i<p),
\]
a representation of the first type would imply
\[
 n\in V+\sum_{i=0}^{p-1}U_i,
\]
contrary to $n\in J_1$ and \eqref{eq349}. A representation of the second type has the form
\[
 n=x_0+\sum_{i=1}^p x_i,\qquad
 x_0\in\mathcal C\cap[0,\tau L],\qquad
 x_i\in\mathcal C\cap B_H(L).
\]
Since $x_i\equiv r\pmod q$ for $1\le i\le p$,
\[
 \begin{aligned}
 x_0&\equiv n-pr\equiv t-pr\equiv a\pmod q,\\
 \mathcal A^{(p)}\cap(a+q\mathbb Z)&=\{a\},\\
 x_0&=a.
 \end{aligned}
\]
Indeed,
\[
 a=e_j,\qquad q=q_j,\qquad j\le k,
\]
\[
 \eqref{eq22}
 \Longrightarrow
 A_0\cap(a+q\mathbb Z)=\{a\},
\]
and
\[
 \mathcal A^{(p)}\setminus A_0
 \subseteq\{x:x\not\equiv e_j\pmod{q_j}\}
 \Longrightarrow
 (\mathcal A^{(p)}\setminus A_0)\cap(a+q\mathbb Z)=\varnothing.
\]
Therefore
\[
 \mathcal A^{(p)}\cap(a+q\mathbb Z)=\{a\}.
\]
Hence
\[
 \cP_a(L)\subseteq
 h\mathcal A^{(p)}\setminus h(\mathcal A^{(p)}\setminus\{a\}).
\]
Since $p\ge2$, \eqref{eq356} gives
\[
 |J_1|\ge2^{-p-1}\zeta T.
\]
Also $q\le Q\le\eta T$, and hence
\[
 2\le 2\eta\,\frac{T}{q}.
\]
Using
\[
 \eta\le\frac{2^{-p-2}\zeta}{p+1}
 \le2^{-p-3}\zeta,
\]
we obtain
\[
 \begin{aligned}
 |\cP_a(L)|
 &=|J_1\cap(t+q\mathbb Z)|\\
 &\ge |J_1|/q-2\\
 &\ge\left(2^{-p-1}\zeta-2\eta\right)\frac{T}{q}\\
 &\ge2^{-p-2}\zeta\,\frac{T}{q}.
 \end{aligned}
\]

If $p=1$, put $I=J_0$ and $I'=J_1$. By Lemma~\ref{lem36}, choose
\[
 D_{\rm r}\subseteq B_H(L),\qquad
 X\subseteq B_C(L),\qquad
 Y\subseteq B_{O_0}(L),
\]
and put
\[
 \mathcal A^{(1)}:=A_0\cup\mathcal E(L)\cup D_{\rm o}\cup H_0^{\rm o}
 \cup V\cup U_0\cup D_{\rm r}\cup X\cup Y,
\]
so that \eqref{eq358}--\eqref{eq360} and \eqref{eq361}--\eqref{eq362} hold simultaneously.

Set
\[
 \mathcal A^*=A_0\cup\mathcal E(L)\cup D_{\rm o}\cup\bigcup_iH_i^{\rm o}
 \cup V\cup\bigcup_iU_i.
\]
If $p\ge2$, let
\[
 \mathcal A^{**}=\mathcal A^*\cup\bigcup_{i=0}^{p-1}D_i.
\]
If $p=1$, let
\[
 \mathcal A^{**}=\mathcal A^*\cup D_{\rm r}\cup X\cup Y.
\]
By \eqref{eq322}, \eqref{eq372}, and \eqref{eq348},
\[
 \begin{aligned}
 \relax[\lambda L,\mu L]\setminus J
 &\subseteq h\mathcal E(L)\subseteq h\mathcal A^{**},\\
 J\setminus(t+q\mathbb Z)
 &\subseteq D_{\rm o}+\sum_{i=0}^{p-1}H_i^{\rm o}
 \subseteq h\mathcal A^{**},\\
 (J\setminus J_0)\cap(t+q\mathbb Z)
 &\subseteq V+\sum_{i=0}^{p-1}U_i
 \subseteq h\mathcal A^{**}.
 \end{aligned}
\]
Inside $J_0\cap(t+q\mathbb Z)$, if $p\ge2$, then
\[
 J_0\cap(t+q\mathbb Z)
 \subseteq a+\sum_{i=0}^{p-1}D_i\subseteq h\mathcal A^{**};
\]
if $p=1$, then
\[
 J_0\cap(t+q\mathbb Z)
 \subseteq(a+D_{\rm r})\cup(X+Y)\subseteq2\mathcal A^{**}.
\]
Therefore, decomposing $[\lambda L,\mu L]$ into $[\lambda L,\mu L]\setminus J$ and $J$,
\[
 [\lambda L,\mu L]\subseteq h\mathcal A^{**}.
\]

If $p\ge2$, then by $q^p\le L^{h\theta-1}$, $T\asymp L^\kappa$, $\kappa=p\theta$, and $\theta>1/h$,
\[
\begin{aligned}
 |\mathcal A^{**}\setminus A_0|
 &\ll q^{p/h}L^{1/h}+T^{1/p}+L/T\\
 &\ll L^\theta+L^{\kappa/p}+L^{1-\kappa}\\
 &\le3L^\theta
 \qquad(1-\kappa=1-p\theta\le\theta)\\
 &\ll L^\theta.
\end{aligned}
\]
If $p=1$, then by \eqref{eq311}, \eqref{eq354}, and \eqref{eq358},
\[
\begin{aligned}
 |\mathcal A^{**}\setminus A_0|
 &\ll q^{1/2}L^{1/2}+T+2^kL/T\\
 &\ll L^\theta+L^\theta+L^\theta\\
 &\ll L^\theta.
\end{aligned}
\]
Since
\[
 \delta<\tau<\alpha d-\rho,
 \qquad
 \alpha d+\rho\le\alpha H+\rho=\frac\omega p+\rho<1
 \quad(d\in\mathcal D),
\]
we have
\[
 \mathcal E(L)\subseteq[\delta L,L],\qquad
 [\delta L,\rho L]\cup\bigcup_{d\in\mathcal D}B_d(L)
 \subseteq[\delta L,L].
\]
Together with \eqref{eq321}, \eqref{eq370}, \eqref{eq350}, and either \eqref{eq374} when $p\ge2$ or \eqref{eq359} when $p=1$, this yields
\[
 \begin{aligned}
 \mathcal A^{**}\setminus A_0&\subseteq[\delta L,L],\\
 x&\not\equiv e_i\pmod{q_i}
 \quad(x\in\mathcal A^{**}\setminus A_0,\ 1\le i\le k),\\
 x&\not\equiv e_{k+1}\pmod{q_{k+1}}
 \quad(x\in\mathcal A^{**}\setminus A_0,\ k<N,\ q_{k+1}\le2L).
 \end{aligned}
\]

If $p\ge2$, then $\mathcal A^{**}=\mathcal A^{(p)}$, and by \eqref{eq377}--\eqref{eq379},
\[
 \mathcal P_a(L)\subseteq J_1\subseteq K\subseteq[0,WL].
\]
If $p=1$, then $\mathcal A^{**}=\mathcal A^{(1)}$, and by \eqref{eq361} and \eqref{eq362},
\[
 \mathcal P_a(L)\subseteq I'=J_1\subseteq K\subseteq[0,WL].
\]
Hence there is a constant $c_2>0$, independent of $L,q$, such that
\begin{equation}\label{eq380}
 \cP_a(L)\subseteq[0,WL]\cap
 \bigl(h\mathcal A^{**}\setminus h(\mathcal A^{**}\setminus\{a\})\bigr),
 \qquad |\cP_a(L)|\ge c_2T/q.
\end{equation}

Take
\[
 F=[(1-\rho)L,L]\cap\mathbb Z.
\]
Since $A_0\subseteq[0,L/R]$, $1/R<\tau=2\rho<1-\rho$, and $1-\rho>W$,
\[
 F\cap A_0=\varnothing,\qquad \min F>WL.
\]
For every $i\le k$,
\[
 \#\{x\in F:x\equiv e_i\pmod{q_i}\}
 \le\frac{|F|}{q_i}+1.
\]
Therefore
\[
 \#\left(F\setminus\bigcup_{i\le k}(e_i+q_i\mathbb Z)\right)
 \ge|F|\left(1-\sum_{i\le k}\frac1{q_i}\right)-k.
\]
By Lemma~\ref{lem21},
\[
 \sum_{i\le k}\frac1{q_i}<2^{-10},
 \qquad k=O(\sqrt{\log L})=o(L).
\]
If $k<N$ and $q_{k+1}\le2L$, then by Lemma~\ref{lem21} and $T\asymp L^\kappa\to\infty$,
\[
 q_{k+1}>C_*^{1/5}(\eta T)^{4/5}\to\infty.
\]
Hence in this case
\[
 \#(F\cap(e_{k+1}+q_{k+1}\mathbb Z))
 \le\frac{|F|}{q_{k+1}}+1=o(L).
\]
Define
\[
 \mathcal F=
 F\setminus\Biggl[
 \bigcup_{i\le k}(e_i+q_i\mathbb Z)
 \cup
 \begin{cases}
 e_{k+1}+q_{k+1}\mathbb Z,&k<N,\ q_{k+1}\le2L,\\
 \varnothing,&\text{otherwise}.
 \end{cases}
 \Biggr].
\]
Since $|F|=\rho L+O(1)$, $k=o(L)$, and
\[
 \sum_{i\le k}\frac1{q_i}<2^{-10},
 \qquad
 \mathbf 1_{\{k<N,\ q_{k+1}\le2L\}}
 \left(\frac{|F|}{q_{k+1}}+1\right)=o(L),
\]
there is a fixed constant $c_1>0$ such that
\[
 |\mathcal F|
 \ge |F|(1-2^{-10})-k-o(L)
 \ge c_1L.
\]
Since $|\mathcal A^{**}\setminus A_0|\ll L^\theta$, choose a fixed constant $C_0$ such that
\[
 M_L:=|\mathcal A^{**}\setminus A_0|\le C_0L^\theta.
\]
Choose $0<c\le c_2$ and $c<c_1/4$. If $M_L\ge\lceil cL^\theta\rceil$, set
\[
 F_0=\varnothing.
\]
If $M_L<\lceil cL^\theta\rceil$, then by $\theta\le1$, $c<c_1/4$, and sufficiently large $L$,
\[
\begin{aligned}
 |\mathcal F\setminus\mathcal A^{**}|
 &\ge c_1L-M_L\\
 &>c_1L-cL^\theta-1\\
 &\ge\frac{c_1}{2}L
 \ge\lceil cL^\theta\rceil-M_L.
\end{aligned}
\]
Thus choose
\[
 F_0\subseteq\mathcal F\setminus\mathcal A^{**},
 \qquad
 |F_0|=\lceil cL^\theta\rceil-M_L.
\]
In both cases $F_0\subseteq\mathcal F$; hence, by the definition of $\mathcal F$, $F_0$ satisfies \eqref{eq24}. Put
\[
 A_1=\mathcal A^{**}\cup F_0.
\]
Since $F_0\cap\mathcal A^{**}=\varnothing$ and $F_0\cap A_0=\varnothing$,
\[
 \begin{aligned}
 |A_1\setminus A_0|&=M_L+|F_0|,\\
 cL^\theta&\le |A_1\setminus A_0|
 \le(C_0+c+1)L^\theta.
 \end{aligned}
\]
Increasing the fixed constant $C_1$ gives \eqref{eq364}. If $F_0\ne\varnothing$, then from $F_0\subseteq F$ and $\mathcal P_a(L)\subseteq[0,WL]$,
\[
 \min F_0\ge(1-\rho)L>WL\ge\max\mathcal P_a(L).
\]
Since $A_1\subseteq\mathbb N_0$, for every $1\le r\le h$ and every $n\in rF_0+(h-r)A_1$,
\[
 n\ge\min F_0>\max\mathcal P_a(L).
\]
The same conclusion below is immediate when $F_0=\varnothing$. Therefore
\begin{equation}\label{eq381}
 \mathcal P_a(L)\cap\bigcup_{r=1}^h\left(rF_0+(h-r)A_1\right)=\varnothing.
\end{equation}
By \eqref{eq380}, \eqref{eq381}, and $h\mathcal A^{**}\subseteq hA_1$,
\[
 \begin{aligned}
 \mathcal P_a(L)
 &\subseteq[0,WL]\cap
 \bigl(hA_1\setminus h(A_1\setminus\{a\})\bigr),\\
 |\mathcal P_a(L)|&\ge c_2T/q\ge cT/q_j.
 \end{aligned}
\]
Finally, from
\[
 A_0\subseteq[0,L/R],\qquad
 \mathcal A^{**}\setminus A_0\subseteq[\delta L,L],\qquad
 F_0\subseteq[(1-\rho)L,L]\subseteq[\delta L,L],
\]
and
\[
 [\lambda L,\mu L]\subseteq h\mathcal A^{**}\subseteq hA_1,
\]
we obtain
\[
 \begin{aligned}
 A_1&\subseteq[0,L/R]\cup[\delta L,L]\subseteq[0,L],\\
 A_1\setminus A_0&\subseteq[\delta L,L],\\
 [\lambda L,\mu L]&\subseteq hA_1.
 \end{aligned}
\]
The lower threshold for $L$ is uniform in the admissible data. Indeed,
\[
 k\le\left(\frac{2\log L}{h\log2}\right)^{1/2},\qquad
 \sum_{i\le N}q_i^{-1}<2^{-10},\qquad
 \prod_{i\le N}(1+q_i^{-1/h})<e^{2^{-10}},
\]
and, when $k<N$,
\[
 q_{k+1}>C_*^{1/5}(\eta T)^{4/5}.
\]
All interval margins above are fixed positive multiples of $L$ or $T$;
$T\ge(\sigma/2)L^\kappa$ once $\sigma L^\kappa\ge2$.
For $p=1$, the only additional growth estimate follows uniformly from
\[
 \frac{2^kL/T}{L^\theta}
 \le\frac2\sigma
 \exp\left(\sqrt{\frac{2\log2}{h}\log L}\right)L^{1-2\theta}
 \longrightarrow0.
\]
The other estimates use only the displayed bounds and the admissibility
conditions \eqref{eq311}. Thus a single $L_*$ suffices for all the data in
the statement.
\end{proof}

\begin{proof}[Proof of Theorem~\ref{thm11}]
Let
\[
 A_0=\{0\},\qquad e_1=0.
\]
Choose a prime $q_1$ satisfying \eqref{eq21}, and put
\[
 Q_1=q_1.
\]
Since $\theta>1/h$ and $p=h-1\ge1$,
\[
 \kappa=p\theta>0,
 \qquad
 h\theta-1>0,
\]
we have
\[
 \lfloor\sigma L^\kappa\rfloor\longrightarrow\infty,
 \qquad L^{h\theta-1}\longrightarrow\infty
 \quad(L\longrightarrow\infty).
\]
Choose $L_1\ge\max\{1,L_*\}$, where $L_*$ is the uniform threshold in Lemma~\ref{lem37}, sufficiently large that
\begin{equation}\label{eq382}
 Q_1\le\eta T_1,
 \qquad
 e_1\le\rho T_1/10,
 \qquad
 q_1^p\le L_1^{h\theta-1},
 \qquad
 T_1=\lfloor\sigma L_1^\kappa\rfloor.
\end{equation}
Set
\[
 L_s=R^{s-1}L_1,
 \qquad
 T_s=\lfloor\sigma L_s^\kappa\rfloor
 \qquad(s\ge1).
\]
Since $R>1$ and $\kappa>0$,
\[
 L_{s+1}=RL_s,\qquad L_s\longrightarrow\infty,
 \qquad T_s\longrightarrow\infty.
\]
For $s\ge1$,
\[
 L_s\ge L_1,\qquad T_s\ge T_1,\qquad
 L_s^{h\theta-1}\ge L_1^{h\theta-1},
\]
and therefore \eqref{eq382} gives
\begin{equation}\label{eq383}
 Q_1\le\eta T_s,
 \qquad
 e_1\le\rho T_s/10,
 \qquad
 q_1^p\le L_s^{h\theta-1}
 \qquad(s\ge1).
\end{equation}
Thus the set in \eqref{eq23} is nonempty and
\[
 k\ge1.
\]
Moreover,
\[
 \begin{aligned}
 A_0&=\{e_1\}=\{0\}\subseteq[0,L_1/R],\\
 A_0\cap(e_1+q_1\mathbb Z)
 &=\{0\}\cap q_1\mathbb Z
 =\{0\}
 =\{e_1\}.
 \end{aligned}
\]

For $s\ge1$, suppose that
\[
 N_{s-1}:=|A_{s-1}|,
 \qquad
 A_{s-1}=\{e_1,\ldots,e_{N_{s-1}}\}\subseteq[0,L_s/R],
\]
and
\begin{equation}\label{eq384}
 A_{s-1}\cap(e_i+q_i\mathbb Z)=\{e_i\}
 \qquad(1\le i\le N_{s-1}).
\end{equation}
By \eqref{eq383}, define
\[
 k_s:=\max\{i\le N_{s-1}:Q_i\le\eta T_s\},
 \qquad
 Q^{(s)}:=Q_{k_s},
\]
and
\[
 v_2(s):=\max\{r\in\mathbb Z_{\ge0}:2^r\mid s\},
 \qquad
 i(s):=1+v_2(s).
\]
Put
\begin{equation}\label{eq385}
 a_s:=
 \begin{cases}
 e_{i(s)},&
 \begin{gathered}
 i(s)\le N_{s-1},\quad i(s)\le k_s,\\
 e_{i(s)}\le\rho T_s/10,\quad q_{i(s)}^p\le L_s^{h\theta-1},
 \end{gathered}\\[3mm]
 e_1,&\text{otherwise}.
 \end{cases}
\end{equation}
By \eqref{eq383},
\[
 1\le k_s,
 \qquad
 e_1\le\rho T_s/10,
 \qquad
 q_1^p\le L_s^{h\theta-1}.
\]
Thus, in either case of \eqref{eq385}, with
\[
 a:=a_s,
 \qquad k:=k_s,
 \qquad Q:=Q^{(s)},
\]
the conditions \eqref{eq311} are satisfied. Lemma~\ref{lem37} therefore gives a set $A_s\supseteq A_{s-1}$ such that
\begin{align}
 A_s&\subseteq[0,L_s],\label{eq386}\\
 A_s\setminus A_{s-1}&\subseteq[\delta L_s,L_s],\label{eq387}\\
 cL_s^\theta
 &\le|A_s\setminus A_{s-1}|\le C_1L_s^\theta,\label{eq388}\\
 [\lambda L_s,\mu L_s]&\subseteq hA_s.\label{eq389}
\end{align}

By \eqref{eq383}, $\kappa\le1$, $L_s\ge1$, and $\eta\sigma<1$,
\[
 Q_1\le\eta T_s\le\eta\sigma L_s^\kappa\le L_s.
\]
Apply Lemma~\ref{lem21} with
\[
 A_0=A_{s-1},\qquad X=A_s\setminus A_{s-1},\qquad L=L_s.
\]
Using \eqref{eq384}, \eqref{eq386}, \eqref{eq366}, and
\[
 A_s\setminus A_{s-1}\subseteq[0,L_s]\setminus A_{s-1},
\]
we obtain
\[
 A_s\cap(e_i+q_i\mathbb Z)=\{e_i\}
 \qquad(1\le i\le N_{s-1}).
\]

Let
\[
 M_s:=|A_s\setminus A_{s-1}|,
 \qquad
 A_s\setminus A_{s-1}=\{e_{N_{s-1}+1},\ldots,e_{N_{s-1}+M_s}\}.
\]
For
\[
 N_{s-1}<r\le N_{s-1}+M_s,
\]
choose a prime $q_r$ such that
\begin{equation}\label{eq390}
 q_r>q_0,
 \qquad
 q_r^{-1/h}<2^{-r-10},
 \qquad
 q_r>C_*Q_{r-1}^4,
 \qquad
 q_r>RL_s.
\end{equation}
The infinitude of primes allows these choices successively for every $N_{s-1}<r\le N_{s-1}+M_s$. Applying Lemma~\ref{lem21} with
\[
 B=A_s,\qquad e=e_r,\qquad q=q_r>RL_s,\qquad L=L_s,
\]
and using \eqref{eq386} and \eqref{eq390}, we obtain
\[
 A_s\cap(e_r+q_r\mathbb Z)=\{e_r\}
 \qquad(N_{s-1}<r\le N_{s-1}+M_s).
\]
Consequently,
\[
 |A_s|=N_{s-1}+M_s,
 \qquad
 A_s\cap(e_i+q_i\mathbb Z)=\{e_i\}
 \quad(1\le i\le |A_s|).
\]
Since $L_{s+1}/R=L_s$, \eqref{eq386} also gives
\[
 A_s\subseteq[0,L_{s+1}/R].
\]
From
\[
 A_0\subseteq A_1\subseteq[0,L_1],\qquad
 A_1\subseteq[0,L_2/R],\qquad
 A_1\cap(e_i+q_i\mathbb Z)=\{e_i\}
 \quad(1\le i\le|A_1|),
\]
together with \eqref{eq386} and \eqref{eq390}, induction yields, for every $s\ge1$,
\[
 \begin{gathered}
 A_{s-1}\subseteq A_s\subseteq[0,L_s],
 \qquad
 A_s\subseteq[0,L_{s+1}/R],\\
 A_s\cap(e_i+q_i\mathbb Z)=\{e_i\}
 \qquad(1\le i\le|A_s|).
 \end{gathered}
\]
Therefore
\[
 A_0\subseteq A_1\subseteq A_2\subseteq\cdots,
 \qquad
 A:=\bigcup_{s\ge0}A_s.
\]

By \eqref{eq388},
\[
 |A_s|
 =1+\sum_{t=1}^s|A_t\setminus A_{t-1}|
 \ge1+c\sum_{t=1}^sL_t^\theta.
\]
Since $R>1$,
\[
 L_t^\theta=L_1^\theta R^{(t-1)\theta},
 \qquad
 \sum_{t=1}^sL_t^\theta\longrightarrow\infty,
 \qquad
 |A_s|\longrightarrow\infty.
\]
Hence, for every fixed $i$, there exists $s_i^{(0)}\in\mathbb N$ such that
\[
 e_i\in A_s\qquad(s\ge s_i^{(0)}),
\]
and the corresponding $q_i,Q_i$ remain fixed thereafter.

Fix $i$. Since
\[
 T_s\longrightarrow\infty,
 \qquad
 L_s^{h\theta-1}\longrightarrow\infty,
\]
we may choose $S_i\ge s_i^{(0)}+1$ such that, for all $s\ge S_i$,
\begin{equation}\label{eq391}
 e_i\in A_{s-1},
 \qquad
 Q_i\le\eta T_s,
 \qquad
 e_i\le\rho T_s/10,
 \qquad
 q_i^p\le L_s^{h\theta-1}.
\end{equation}
From $e_i\in A_{s-1}$ and $Q_i\le\eta T_s$, the definition of $k_s$ gives
\[
 i\le k_s\qquad(s\ge S_i).
\]

By the definition of $v_2(s)$,
\[
 s=2^{v_2(s)}(2m+1)\quad\text{for some }m\in\mathbb Z_{\ge0}.
\]
Since $i(s)=1+v_2(s)$, the equality $i(s)=i$ is equivalent to $v_2(s)=i-1$. Hence
\begin{equation}\label{eq392}
 i(s)=i
 \iff
 s=2^{i-1}(2m+1)
 \quad(m\in\mathbb Z_{\ge0}).
\end{equation}
The difference between two consecutive such indices is
\[
 2^{i-1}\bigl(2(m+1)+1\bigr)
 -2^{i-1}(2m+1)
 =2^i.
\]
Combining \eqref{eq385}, \eqref{eq391}, and \eqref{eq392}, the set
\[
 \mathcal S_i:=\{s\ge S_i:a_s=e_i\}
\]
is infinite. If
\[
 s<t,\qquad s,t\in\mathcal S_i,
 \qquad
 \mathcal S_i\cap(s,t)=\varnothing,
\]
then
\begin{equation}\label{eq393}
 t-s\le G_i,
 \qquad
 G_i:=2^i.
\end{equation}
For $i\ge2$, this follows directly from \eqref{eq392}; for $i=1$, \eqref{eq385} may also give $a_s=e_1$ at additional indices $s$, which can only decrease the gap.

On the other hand, by \eqref{eq389} and $A_s\subseteq A$,
\[
 [\lambda L_s,\mu L_s]\subseteq hA_s\subseteq hA
 \qquad(s\ge1).
\]
By $\lambda R<\mu$ in \eqref{eq38},
\[
 L_{s+1}=RL_s,
 \qquad
 \lambda L_{s+1}
 =\lambda RL_s
 <\mu L_s.
\]
Therefore
\[
 \lceil\lambda L_{s+1}\rceil\le\lfloor\mu L_s\rfloor+1.
\]
Since $\lambda L_s\to\infty$,
\[
 [\lambda L_1,\infty)
 \subseteq
 \bigcup_{s\ge1}[\lambda L_s,\mu L_s]
 \subseteq hA.
\]
Thus $A$ is an asymptotic basis of order $h$.

To estimate $A(x)$ from above, let $x\ge\delta L_1$ and choose the unique $s$ such that
\begin{equation}\label{eq394}
 \delta L_s\le x<\delta L_{s+1}.
\end{equation}
If $t\ge s+1$, then by \eqref{eq387} and $L_t\ge L_{s+1}$,
\[
 A_t\setminus A_{t-1}\subseteq[\delta L_t,L_t],
 \qquad
 \delta L_t\ge\delta L_{s+1}>x.
\]
Hence
\[
 A(x)=|A_s\cap[0,x]|\le|A_s|.
\]
By \eqref{eq388},
\[
\begin{aligned}
 A(x)
 &\le1+\sum_{t=1}^s|A_t\setminus A_{t-1}|\\
 &\le1+C_1\sum_{t=1}^sL_t^\theta.
\end{aligned}
\]
Since $L_t=L_sR^{t-s}$,
\[
 \begin{aligned}
 \sum_{t=1}^sL_t^\theta
 &=L_s^\theta\sum_{r=0}^{s-1}R^{-r\theta}\\
 &\le\frac{L_s^\theta}{1-R^{-\theta}}.
 \end{aligned}
\]
From $\delta L_s\le x$,
\[
 L_s\le\delta^{-1}x.
\]
Therefore
\[
\begin{aligned}
 A(x)
 &\le1+\frac{C_1}{1-R^{-\theta}}L_s^\theta\\
 &\le1+\frac{C_1\delta^{-\theta}}{1-R^{-\theta}}x^\theta\\
 &\ll x^\theta.
\end{aligned}
\]

For $x\ge L_1$, choose the unique $s$ such that
\begin{equation}\label{eq395}
 L_s\le x<L_{s+1}=RL_s.
\end{equation}
By \eqref{eq387},
\[
 A_s\setminus A_{s-1}\subseteq[\delta L_s,L_s]\subseteq[0,x].
\]
Thus, by \eqref{eq388},
\[
 A(x)
 \ge|A_s\setminus A_{s-1}|
 \ge cL_s^\theta.
\]
Since $x<RL_s$,
\[
 L_s>R^{-1}x,
\]
and hence
\[
 A(x)
 \ge cL_s^\theta
 >cR^{-\theta}x^\theta.
\]
Combining the upper and lower bounds gives
\begin{equation}\label{eq396}
 A(x)\asymp x^\theta.
\end{equation}

Fix $e_i\in A$ and $s\in\mathcal S_i$. Since $a_s=e_i$, \eqref{eq367} of Lemma~\ref{lem37} gives
\[
 \mathcal P_{e_i}(L_s)
 \subseteq[0,WL_s]\cap
 \bigl(hA_s\setminus h(A_s\setminus\{e_i\})\bigr),
\]
and
\begin{equation}\label{eq397}
 |\mathcal P_{e_i}(L_s)|\ge cT_s/q_i.
\end{equation}

For $t\ge s+1$, \eqref{eq387} gives
\[
 A_t\setminus A_{t-1}\subseteq[\delta L_t,L_t]
 \subseteq[\delta L_{s+1},\infty).
\]
Taking the union over all $t\ge s+1$ and using $\delta R>W$ from \eqref{eq38},
\[
 A\setminus A_s\subseteq[\delta L_{s+1},\infty),
 \qquad
 \delta L_{s+1}
 =\delta RL_s
 >WL_s.
\]
If $n\in\mathcal P_{e_i}(L_s)$, then
\[
 n\le WL_s<\delta L_{s+1}.
\]
If an $h$-term representation $n=y_1+\cdots+y_h$ contains some $y_j\in A\setminus A_s$, then, since $A\subseteq\Nzero$,
\[
 n=\sum_{\nu=1}^h y_\nu
 \ge y_j\ge\delta L_{s+1}>n,
\]
a contradiction. Thus every $h$-term representation of $n$ uses only elements of $A_s$. Since
\[
 n\notin h(A_s\setminus\{e_i\}),\qquad
 n\in hA_s\subseteq hA,
\]
it follows that
\[
 n\notin h(A\setminus\{e_i\}),\qquad
 n\in hA.
\]
Therefore
\begin{equation}\label{eq398}
 \mathcal P_{e_i}(L_s)\subseteq E_{e_i}.
\end{equation}

For fixed $i$, write the infinite set $\mathcal S_i$ as
\[
 \mathcal S_i=\{s_{i,1}<s_{i,2}<s_{i,3}<\cdots\}.
\]
By \eqref{eq393},
\[
 s_{i,r+1}-s_{i,r}\le G_i=2^i
 \qquad(r\ge1).
\]
For all sufficiently large $x$, the set
\[
 \{s\in\mathcal S_i:WL_s\le x\}
\]
is nonempty and finite. Define
\[
 s:=\max\{u\in\mathcal S_i:WL_u\le x\},
 \qquad
 t:=\min\{u\in\mathcal S_i:u>s\}.
\]
Since $L_u$ is strictly increasing in $u$ and $t-s\le G_i$,
\[
 WL_s\le x<WL_t,\qquad t-s\le G_i.
\]
Hence
\[
 x<WL_t=WR^{t-s}L_s\le WR^{G_i}L_s,
\]
and therefore
\begin{equation}\label{eq399}
 L_s>\frac{x}{WR^{G_i}}.
\end{equation}

Using $WL_s\le x$, \eqref{eq398}, and \eqref{eq397}, we obtain
\[
 \begin{aligned}
 \mathcal P_{e_i}(L_s)
 &\subseteq E_{e_i}\cap[0,WL_s]
 \subseteq E_{e_i}\cap[0,x],\\
 E_{e_i}(x)
 &\ge|\mathcal P_{e_i}(L_s)|
 \ge\frac{cT_s}{q_i}.
 \end{aligned}
\]
By \eqref{eq399}, $L_s\to\infty$ as $x\to\infty$, and hence $s\to\infty$. Thus, for all sufficiently large $x$,
\[
 \sigma L_s^\kappa\ge2,\qquad
 T_s
 \ge\sigma L_s^\kappa-1
 \ge\frac{\sigma}{2}L_s^\kappa.
\]
Therefore, by \eqref{eq399},
\[
\begin{aligned}
 E_{e_i}(x)
 &\ge\frac{c\sigma}{2q_i}L_s^\kappa\\
 &>\frac{c\sigma}{2q_iW^\kappa R^{G_i\kappa}}x^\kappa.
\end{aligned}
\]
Thus, for fixed $e_i$,
\begin{equation}\label{eq3100}
 E_{e_i}(x)\gg_i x^\kappa
 =x^{\theta(h-1)}.
\end{equation}
Since $A=\{e_1,e_2,\ldots\}$, \eqref{eq3100} gives, for every $a=e_i\in A$,
\[
 E_a(x)\gg_a x^{\theta(h-1)}.
\]

By \eqref{eq396}, there is a fixed constant $C>0$ such that, for all sufficiently large $x$,
\[
 A(x)\le Cx^\theta.
\]
Hence
\[
 A(x)^p
 \le C^px^{p\theta}
 =C^px^\kappa.
\]
By \eqref{eq3100}, for every fixed $e_i$ there is a constant $c_i>0$ such that, for all sufficiently large $x$,
\[
 E_{e_i}(x)\ge c_ix^\kappa
 \ge\frac{c_i}{C^p}A(x)^p,
 \qquad p=h-1.
\]
Thus $E_{e_i}(x)\gg_i A(x)^{h-1}$ for every $e_i\in A$, and hence $A$ is strongly minimal.

\end{proof}


\begin{thebibliography}{99}\small

\bibitem{ChenChen2011}
F. J. Chen and Y. G. Chen,
\emph{On minimal asymptotic bases},
European J. Combin. \textbf{32} (2011), 1329--1335.

\bibitem{Chen2025Strongly}
S.-Q. Chen,
\emph{On strongly asymptotic bases of order $h$},
Ramanujan J. \textbf{66} (2025), Article~78.

\bibitem{DeshouillersFouvry1976}
J.-M. Deshouillers and E. Fouvry,
\emph{On additive bases (II)},
J. London Math. Soc. (2) \textbf{14} (1976), 413--422.

\bibitem{ErdosNathanson1977}
P. Erd\H{o}s and M. B. Nathanson,
\emph{Nonbases of density zero not contained in maximal nonbases},
J. London Math. Soc. (2) \textbf{15} (1977), 403--405.

\bibitem{ErdosNathanson1980}
P. Erd\H{o}s and M. B. Nathanson,
\emph{Minimal asymptotic bases for the natural numbers},
J. Number Theory \textbf{12} (1980), 154--159.

\bibitem{JiaNathanson1989}
X. D. Jia and M. B. Nathanson,
\emph{A simple construction of minimal asymptotic bases},
Acta Arith. \textbf{52} (1989), 95--101.

\bibitem{Nathanson1974}
M. B. Nathanson,
\emph{Minimal bases and maximal nonbases in additive number theory},
J. Number Theory \textbf{6} (1974), 324--333.

\bibitem{Nathanson1988}
M. B. Nathanson,
\emph{Minimal bases and powers of $2$},
Acta Arith. \textbf{49} (1988), 525--532.

\bibitem{Pilatte2024}
C. Pilatte,
\emph{A solution to the Erd\H{o}s--S\'ark\H{o}zy--S\'os problem on asymptotic Sidon bases of order $3$},
Compos. Math. \textbf{160} (2024), 1418--1432.

\bibitem{Sun2021}
C.-F. Sun,
\emph{On a problem of Nathanson on minimal asymptotic bases},
J. Number Theory \textbf{218} (2021), 152--160.

\bibitem{Vu2000}
V. H. Vu,
\emph{On a refinement of Waring's problem},
Duke Math. J. \textbf{105} (2000), 107--134.

\bibitem{Wooley2003}
T. D. Wooley,
\emph{On Vu's thin basis theorem in Waring's problem},
Duke Math. J. \textbf{120} (2003), 1--34.

\end{thebibliography}
\end{document}